\DeclareMathOperator{\ord}{ord}
\DeclareMathOperator{\dime}{dim}
\DeclareMathOperator{\pideg}{PI-deg}
\DeclareMathOperator{\cf}{Fract}
\DeclareMathOperator{\gcdi}{gcd}
\DeclareMathOperator{\diagonal}{diag}
\DeclareMathOperator{\colu}{col}
\DeclareMathOperator{\rowa}{row}
\DeclareMathOperator{\kere}{ker}
\DeclareMathOperator{\ran}{rank}
\DeclareMathOperator{\spa}{span}
\DeclareMathOperator{\ke}{ker}
\DeclareMathOperator{\mo}{mod}
\DeclareMathOperator{\mi}{min}
\numberwithin{equation}{section}
\newtheorem{theom}{Theorem}[section]
\newtheorem{defin}{Definition}[section]
\newtheorem{propn}{Proposition}[section]
\newtheorem{remak}{Remark}[section] 
\newtheorem{lemma}{Lemma}[section]
\newtheorem{theo}{Theorem}[subsection]
\newtheorem{lemm}{Lemma}[subsection]
\newtheorem{rema}{Remark}[subsection]
\newtheorem{prop}{Proposition}[subsection]
\begin{document}

\setcounter{page}{1} 
\baselineskip .65cm 
\pagenumbering{arabic}

\title[Quantum Spatial Ageing Algebra]{Representations Of Quantum Spatial Ageing Algebra\\ at root of unity}
\author [Snehashis Mukherjee~And~Sanu Bera]{Snehashis Mukherjee$^1$ \and Sanu Bera$^2$}

\address {\newline Snehashis Mukherjee$^1$~~and Sanu Bera$^2$\newline School of Mathematical Sciences, \newline Ramakrishna Mission Vivekananda Educational and Research Institute (rkmveri), \newline Belur Math, Howrah, Box: 711202, West Bengal, India.
 }
\email{\href{mailto:tutunsnehashis@gmail.com}{tutunsnehashis@gmail.com$^1$};\href{mailto:sanubera6575@gmail.com}{sanubera6575@gmail.com$^2$}}

\subjclass[2020]{16D25, 16D60, 16D70, 16S85, 16T20, 16R20}
\keywords{simple module, quantum spatial ageing algebra, polynomial identity algebra}

\maketitle

\begin{abstract}
In this article, the quantum spatial ageing algebra due to V. V. Bavula and T. Lu has been studied and a full classification of simple modules are given at root of unity. 
\end{abstract}

\section{\bf{Introduction}}
Let $\mathbb{K}$ be a field and an element $q\in \mathbb{K}$ with $q\neq 0$ and $q^2\neq 1$. The algebra $\mathbb{K}_q[X,Y]:=\mathbb{K}\langle X,Y|XY=qYX\rangle$ is called the quantum plane. 
The quantized enveloping algebra $U_q(\mathfrak{sl}_2)$ of $\mathfrak{sl}_2$ is generated over $\mathbb{K}$ by elements $E,F,K$ and $K^{-1}$ subject to the defining relations:
$$KE=q^2EK,~KF=q^{-2}FK,~EF-FE=\frac{K-K^{-1}}{q-q^{-1}},~ KK^{-1}=K^{-1}K=1.$$
\par The quantum plane and the quantized enveloping algebra $U_q(\mathfrak{sl}_2)$ are important examples of generalized Weyl algebras and ambiskew polynomial rings, see e,g., \cite{vvb} and \cite{dajew}. Let $U_q^{\geq 0}(\mathfrak{sl}_2)$ be the `positive part' of $U_q(\mathfrak{sl}_2)$. It is a subalgebra of $U_q(\mathfrak{sl}_2)$ generated by $K^{\pm 1}$ and $E$. A Hopf algebra structure on $U_q(\mathfrak{sl}_2)$ is defined as follows:
$$\begin{array}{lll}
  \Delta(K)=K\otimes K, & \epsilon(K)=1, & S(K)=K^{-1},\\
  \Delta(E)=E\otimes 1+K\otimes E, & \epsilon(E)=0, & S(E)=-K^{-1}E,\\
  \Delta(F)=F\otimes K^{-1}+1\otimes F, &\epsilon(F)=0,& S(F)=-FK,
\end{array}$$
where $\Delta$ is a comultiplication on $U_q(\mathfrak{sl}_2)$, $\epsilon$ is the counit and $S$ is the antipode of $U_q(\mathfrak{sl}_2)$. Note that the Hopf algebra $U_q(\mathfrak{sl}_2)$ is neither cocommutative nor commutative. There is also a Hopf algebra structure on $U_q^{\geq 0}(\mathfrak{sl}_2)$. 
\par Quantum groups appeared in the $1980$’s and remained a very active area of research. Yet, most of research are related to quantum deformations of semisimple Lie algebras and algebraic groups, there are not many examples of quantum analogue of non-semisimple Lie algebras. One of the interesting examples is the quantum spatial ageing algebra. As mentioned in \cite{bavl}, this algebra is the quantum analogue of the $4$-dimensional (non-semisimple) Lie algebra with basis $\{h,e,x,y\}$ and Lie brackets
\[[h,e]=2e,~[h,x]=x,~[h,y]=-y,~[e,x]=0,~[e,y]=x,~[x,y]=0,\]so-called, the spatial ageing algebra (cf. \cite{luu}). 
\par The notion of smash product has proved to be very useful in studying Hopf algebra actions \cite{sm}. For example, the enveloping algebra of a semi-direct product of Lie algebras can naturally be seen as a smash product algebra. The smash product is constructed from a module algebra, see \cite[4.1]{sm} for details and examples. The quantum plane $\mathbb{K}_q[X,Y]$ is a $U_q(\mathfrak{sl}_2)$-module algebra where
\[\begin{array}{lll}
 K\cdot X=qX,& E\cdot X=0,& F\cdot X=Y\\
  K\cdot Y=q^{-1}Y,& E\cdot Y=X,& F\cdot Y=0
\end{array}\]
and the quantum plane $\mathbb{K}_q[X,Y]$ is also a $U_q^{\geq 0}(\mathfrak{sl}_2)$-module algebra. Then one can form the smash product algebras  \[\mathcal{A}:=\mathbb{K}_q[X,Y]\rtimes U_q^{\geq 0}(\mathfrak{sl}_2)\ \ \text{and}\ \ {A}:=\mathbb{K}_q[X,Y]\rtimes U_q(\mathfrak{sl}_2).\] We call the algebra $\mathcal{A}$ as the quantum spatial ageing algebra. The defining relations for the algebra $\mathcal{A}$ are given explicitly in the following definition.
\begin{defin} \emph{(\cite{bavl})}
The quantum spatial ageing algebra $\mathcal{A}:=\mathbb{K}_q[X,Y]\rtimes U_q^{\geq 0}(\mathfrak{sl}_2)$ is an algebra generated over $\mathbb{K}$ by the elements $E,K,K^{-1},X$ and $Y$ (where $K^{-1}$ is the inverse of $K$) subject to the defining relations:
\begin{equation}\label{e1}
\begin{rcases}
EK=q^{-2}KE,~ XK=q^{-1}KX,~YK=qKY,\\
EX=qXE,~ EY=X+q^{-1}YE, ~ XY=qYX. 
\end{rcases}\end{equation}
\end{defin}
The smash product algebras $\mathcal{A}$ and $A$ are studied in detail in \cite{bavl} and \cite{bavlu} respectively where $q$ is not a root of unity. In particular their prime, primitive and maximal spectra were explicitly described as well as the corresponding factor algebras. The group of automorphisms of $\mathcal{A}$ was determined, and the simple unfaithful $\mathcal{A}$-modules and the simple weight $\mathcal{A}$-modules were classified in \cite{bavl}. Various classes of torsion simple $\mathcal{A}$-modules were classified in \cite{bav1}. The centre of $A$ and a classification of simple weight $A$-modules were obtained in \cite{bavlu}.\\
\textbf{Aim.} In this paper, we continue the study of simple modules over the smash product algebras $\mathcal{A}$ and $A$ at root of unity. Our aim is to construct and classify simple modules over the algebra $\mathcal{A}$ assuming that $\mathbb{K}$ is an algebraically closed field and $q$ is a primitive $l$-th root of unity with $l\geq 3$. Also under the same assumption we wish to classify some class of simple modules for the algebra $A$.\\
\textbf{Arrangement:} The paper is organized as follows: In the first six section we will continue our discussion about the algebra $\mathcal{A}$. In Section $2$, we will discuss about the theory of Polynomial Identity algebras to comment about the $\mathbb{K}$-dimension of the simple modules over $\mathcal{A}$ and $A$. An explicit expression of PI-degree for $\mathcal{A}$ is computed in Section $3$. In Section $4$ and Section $5$ we construct and classify all simple $\mathcal{A}$-modules respectively. In section $6$, we find a class of finite dimensional indecomposable module over $\mathcal{A}$. In Section $7$, we focus on the algebra $A$ along with its PI-deg and give a classification of simple $A$-modules with invertible action of $X$ and $Y$.

\section{\bf{Preliminaries}}
Let $\mathbb{E}$ be the subalgebra of $\mathcal{A}$ generated by the elements $X,E$ and $Y$. The generators of the algebra $\mathbb{E}$ satisfy the defining relations
\[EX=qXE,~YX=q^{-1}XY,~EY-q^{-1}YE=X.\]
The algebra $\mathbb{E}$ has an iterated skew polynomial presentation with respect to the ordering of variables $X,Y,E$ of the form: \[\mathbb{K}[X][Y,\alpha][E,\beta,\delta],\]
where the $\alpha$ and $\beta$ are $\mathbb{K}$-linear automorphisms and the $\delta$ is $\mathbb{K}$-linear $\beta$-derivation such that
\begin{equation}\label{r1}
\alpha(X)=q^{-1}X,~\beta(X)=qX,~\beta(Y)=q^{-1}Y,~\delta(X)=0,~\delta(Y)=X.
\end{equation}
So, the algebra $\mathcal{A}=\mathbb{E}[K^{\pm 1},\tau]$ is a skew Laurent polynomial algebra where $\tau$ is a $\mathbb{K}$-linear automorphism on $\mathbb{E}$ such that
\begin{equation}\label{r2}
    \tau(X)=qX,~\tau(Y)=q^{-1}Y,~\tau (E)=q^{2}E.
\end{equation}
This observation along with the skew polynomial version of the Hilbert Basis Theorem (cf. \cite[Theorem 2.9]{mcr}) yields the following proposition.
\begin{propn}\label{imp}
The algebra $\mathcal{A}$ is an affine noetherian domain. Moreover, the family $\{X^{a}Y^{b}E^{c}K^{\pm d}~|~a,b,c,d\geq 0\}$ is a $\mathbb{K}$-basis of $\mathcal{A}$.
\end{propn}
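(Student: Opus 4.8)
The plan is to realize $\mathcal{A}$ as the top of a short tower of Ore extensions starting from the ordinary polynomial ring $\mathbb{K}[X]$, and then to read off all four assertions (affine, Noetherian, domain, and the monomial basis) from the standard structure theory of skew polynomial and skew Laurent rings. Concretely, I would build the tower $\mathbb{K}[X]\subset\mathbb{K}[X][Y,\alpha]\subset\mathbb{K}[X][Y,\alpha][E,\beta,\delta]=\mathbb{E}\subset\mathbb{E}[K^{\pm1},\tau]=\mathcal{A}$, using the data $\alpha,\beta,\delta,\tau$ recorded in \eqref{r1} and \eqref{r2}, and check at each floor that the Ore and skew-Laurent relations reproduce exactly the defining relations \eqref{e1}.

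The technical heart of the argument is to verify that the data defining each extension are admissible. For the first floor this is immediate: since $q\neq 0$, the assignment $\alpha(X)=q^{-1}X$ is a $\mathbb{K}$-algebra automorphism of $\mathbb{K}[X]$, and the extension relation $YX=\alpha(X)Y=q^{-1}XY$ is the desired one. For the second floor I would check that $\beta$ extends to an automorphism of $\mathbb{K}[X][Y,\alpha]$, which amounts to confirming that $\beta$ preserves the relation $YX=q^{-1}XY$ (indeed $\beta(Y)\beta(X)=q^{-1}Y\cdot qX=YX$, and likewise $\beta(q^{-1}XY)=q^{-1}XY$), and crucially that $\delta$ is a well-defined $\beta$-derivation. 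This last point is the one place where something could fail: $\delta$ is prescribed only on generators by $\delta(X)=0$, $\delta(Y)=X$, so I must verify the single compatibility identity forced by the defining relation, namely $\delta(YX)=\delta(q^{-1}XY)$. Using the $\beta$-Leibniz rule one computes $\delta(YX)=\delta(Y)X+\beta(Y)\delta(X)=X^2$ and $\delta(q^{-1}XY)=q^{-1}(\delta(X)Y+\beta(X)\delta(Y))=q^{-1}\cdot qX\cdot X=X^2$, so the two agree and $\delta$ extends uniquely. The resulting relations $EX=\beta(X)E+\delta(X)=qXE$ and $EY=\beta(Y)E+\delta(Y)=q^{-1}YE+X$ then match \eqref{e1} precisely. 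For the top floor I would check that $\tau$ respects all three relations of $\mathbb{E}$, so it is an automorphism (invertible since it rescales each generator by a nonzero scalar), and the skew-Laurent relations $KX=\tau(X)K=qXK$, $KY=q^{-1}YK$, $KE=q^2EK$ reproduce the remaining relations of \eqref{e1}. By a standard argument — the relations \eqref{e1} let one rewrite any word in the generators as a $\mathbb{K}$-combination of the monomials $X^aY^bE^cK^d$, while these monomials are linearly independent in the constructed tower — the canonical surjection from the abstractly presented algebra onto the tower is an isomorphism.

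With the tower in place the four conclusions follow from standard results. The base $\mathbb{K}[X]$ is an affine Noetherian domain; an Ore extension $R[x,\sigma,\delta]$ of a Noetherian domain $R$ by an automorphism $\sigma$ is again a Noetherian domain (Noetherianity by the skew polynomial Hilbert Basis Theorem \cite[Theorem 2.9]{mcr}, and the domain property because injectivity of $\sigma$ forces the leading degrees of a product to add), and the same holds for a skew Laurent extension $R[K^{\pm1},\tau]$ by an automorphism $\tau$. Iterating three times shows $\mathcal{A}$ is a Noetherian domain, and it is affine because it is generated by the finite set $\{E,K,K^{-1},X,Y\}$. For the basis, each Ore extension is free as a left module over the preceding ring with basis the powers of the newly adjoined variable, and the skew Laurent extension is free over $\mathbb{E}$ with basis $\{K^{d}\mid d\in\mathbb{Z}\}$; multiplying these bases yields that $\{X^{a}Y^{b}E^{c}K^{\pm d}\mid a,b,c,d\geq 0\}$ is a $\mathbb{K}$-basis of $\mathcal{A}$, as claimed.

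I expect the only genuinely delicate point to be the well-definedness of the $\beta$-derivation $\delta$ on $\mathbb{K}[X][Y,\alpha]$, that is, the consistency check $\delta(YX)=\delta(q^{-1}XY)$ above, since everything else is either a direct relation-matching computation or a citation of the skew polynomial structure theorems. Once that compatibility is confirmed, the identification of $\mathcal{A}$ with the iterated skew extension, and therewith the whole proposition, is automatic.
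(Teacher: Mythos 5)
Your proposal is correct and takes essentially the same route as the paper: the paper likewise presents $\mathcal{A}$ as the iterated tower $\mathbb{K}[X][Y,\alpha][E,\beta,\delta][K^{\pm 1},\tau]$ built from the data in \eqref{r1} and \eqref{r2} and then invokes the skew polynomial Hilbert Basis Theorem \cite[Theorem 2.9]{mcr} to conclude the affine Noetherian domain property and the monomial basis. The admissibility verifications you single out (notably the consistency check $\delta(YX)=\delta(q^{-1}XY)=X^2$) are exactly the details the paper leaves implicit, and your computations there are correct.
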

\par A non zero element $x$ of an algebra ${A}$ is called a normal element if $x{A}={A}x$. Let us consider an element $\phi:=EY-qYE$ of the algebra $\mathcal{A}$ (cf. \cite{bavl}). Then one can easily verify that 
\begin{equation}\label{r3}
    \phi=X+(q^{-1}-q)YE=q^2X+(1-q^2)EY
\end{equation}
\begin{equation}\label{r4}
    X\phi=\phi X,~Y\phi=q\phi Y,~E\phi=q^{-1}\phi E,~K\phi=q\phi K
\end{equation}
So the element $\phi$ is a normal element of the algebra $\mathcal{A}$. Also from the defining relations $(\ref{e1})$ of the algebra $\mathcal{A}$, it follows that $X$ is a normal element of $\mathcal{A}$.
\subsection{Polynomial Identity Algebras}
In this subsection, we recall some known facts concerning Polynomial Identity algebra that we shall be applying on quantum spatial ageing algebra at root of unity for further development. First we recall a result which provides a sufficient condition for a ring to be PI. 

\begin{prop}\emph{(\cite[Corollary 13.1.13]{mcr})}\label{f}
If $R$ is a ring which is finitely generated module over a commutative subring, then $R$ is a PI ring.
\end{prop}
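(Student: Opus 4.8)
The plan is to reduce the statement to the Amitsur--Levitzki theorem, which asserts that the ring $M_n(C')$ of $n\times n$ matrices over any commutative ring $C'$ satisfies the standard polynomial identity $s_{2n}$ of degree $2n$. Since a polynomial identity satisfied by a ring is automatically inherited by every subring and by every homomorphic image, it suffices to realise $R$ inside a subquotient of such a matrix ring. Write $R=\sum_{i=1}^{n} C x_i$, where $C$ is the given commutative subring over which $R$ is module-finite. (In the intended application $C$ will be a central subring, which is precisely what makes the regular representation below $C$-linear; I carry out the argument under this hypothesis. In general one exploits instead that left multiplication is always right-$C$-linear.)

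First I would consider the left regular representation $\lambda\colon R\to \enmor_C(R)$, $r\mapsto \lambda_r$, where $\lambda_r(x)=rx$. Because $C$ is central, each $\lambda_r$ is a $C$-module endomorphism of $R$, and $\lambda$ is a homomorphism of rings; it is injective since $R$ is unital and $\lambda_r(1)=r$. Thus $R\hookrightarrow \enmor_C(R)$, and it remains only to prove that $\enmor_C(R)$ is a PI ring.

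The main obstacle is that $R$ need not be free as a $C$-module, so $\enmor_C(R)$ is not literally $M_n(C)$. To circumvent this I would fix the surjection of $C$-modules $p\colon C^{n}\twoheadrightarrow R$ sending the standard basis to $x_1,\dots,x_n$, and set $N=\kere p$. Every $f\in\enmor_C(R)$ lifts, by projectivity of $C^{n}$, to some $g\in M_n(C)=\enmor_C(C^n)$ with $p\circ g=f\circ p$; any such $g$ necessarily satisfies $g(N)\subseteq N$. Letting $T=\{\,g\in M_n(C):g(N)\subseteq N\,\}$, a subring of $M_n(C)$, and $I=\{\,g\in T: g(C^n)\subseteq N\,\}$, a two-sided ideal of $T$, the induced map $T\to\enmor_C(R)$, $g\mapsto\bar g$, is a surjective ring homomorphism with kernel $I$, so $\enmor_C(R)\cong T/I$.

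Finally I would assemble the pieces: $M_n(C)$ satisfies $s_{2n}$ by Amitsur--Levitzki, hence so does its subring $T$, and therefore so does the quotient $T/I\cong\enmor_C(R)$. Since $R$ embeds in $\enmor_C(R)$, the ring $R$ itself satisfies $s_{2n}$ and is in particular a PI ring, as claimed. The only delicate point is the passage through the non-free module $R$, which the subquotient description of $\enmor_C(R)$ resolves; everything else is a formal transport of identities along subrings and quotients.
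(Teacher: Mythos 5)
Your proof is correct, and it is in substance the proof of the result the paper is citing: the paper gives no argument of its own for this proposition, quoting it verbatim as \cite[Corollary 13.1.13]{mcr}, and the argument there is exactly your chain --- the regular representation $R\hookrightarrow \enmor_C(R)$, the realisation of $\enmor_C(R)$ as a subquotient $T/I$ of $M_n(C)$ via a presentation $C^n\twoheadrightarrow R$, and then Amitsur--Levitzki together with the stability of the multilinear identity $s_{2n}$ under passage to subrings and homomorphic images. Your centrality caveat is also resolved correctly: for a non-central commutative subring $C$ one regards $R$ as a right $C$-module, left multiplications are right-$C$-linear, and the whole argument goes through verbatim; moreover, in the paper's only use of this proposition (Proposition~\ref{finite}, where the subring is the subalgebra $Z$ generated by $K^{\pm l},E^l,X^l,Y^l$, central by Lemma~\ref{cc1}), even your restricted version suffices.
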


\begin{lemm}\label{c1}
For $i\geq 1$, the following identities hold in the algebra $\mathcal{A}$:
\begin{enumerate}
    \item [(i)] $EY^{i}=q^{-i}Y^iE+\displaystyle\frac{q^{-2i}-1}{q^{-2}-1}XY^{i-1}$
    \item [(ii)] $YE^{k}=q^{i}E^{i}Y-\displaystyle\frac{q(1-q^{2i})}{1-q^2}XE^{i-1}.$
\end{enumerate}
The equalities can be proved by induction on $i$ and using the relation $EY=X+q^{-1}YE$. With this we have:
\end{lemm}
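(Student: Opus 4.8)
The plan is to prove both identities by induction on $i$, using the single defining relation $EY = X + q^{-1}YE$ together with the commutation rules $XY = qYX$ and $EX = qXE$ from \eqref{e1}. The two statements are structurally parallel, so I would treat (i) in detail and indicate the symmetric modifications for (ii).

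For (i), the base case $i = 1$ is exactly the defining relation, since $\frac{q^{-2}-1}{q^{-2}-1} = 1$. For the inductive step I would assume the formula for $i$ and multiply on the right by $Y$, writing $EY^{i+1} = (EY^i)Y$ and substituting the inductive hypothesis. The first term $q^{-i}Y^i E Y$ is handled by pushing $E$ past a single $Y$ via $EY = X + q^{-1}YE$, producing $q^{-i}Y^i X + q^{-i-1}Y^{i+1}E$; the factor $Y^i X$ is then rewritten as $q^{-i}X Y^i$ using $Y^i X = q^{-i} X Y^i$, a consequence of $XY = qYX$. The second term simply becomes $\frac{q^{-2i}-1}{q^{-2}-1}XY^i$. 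Collecting, the coefficient of $XY^i$ is $q^{-2i} + \frac{q^{-2i}-1}{q^{-2}-1}$, and the content of the step is to verify that this equals $\frac{q^{-2(i+1)}-1}{q^{-2}-1}$, while the $Y^{i+1}E$-term already carries the predicted power $q^{-(i+1)}$.

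For (ii), I would first rearrange the defining relation into $YE = qEY - qX$ to obtain the base case, then run the same induction multiplying on the right by $E$ and commuting $X$ past powers of $E$ with $E^i X = q^i X E^i$, which follows from $EX = qXE$. The collected coefficient of $XE^i$ then takes the form $-q^{2i+1} - \frac{q(1-q^{2i})}{1-q^2}$, which I would check equals $-\frac{q(1-q^{2(i+1)})}{1-q^2}$.

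The argument involves no genuine obstacle; its only real content is the verification that the accumulated $q$-power coefficients collapse to the stated geometric-series closed forms. Explicitly, each step reduces to an elementary identity of the shape $q^{-2i} + \frac{q^{-2i}-1}{q^{-2}-1} = \frac{q^{-2(i+1)}-1}{q^{-2}-1}$, and its analogue for (ii), both of which follow at once by clearing the common denominator. I would therefore present the base case, one inductive substitution, and this coefficient identity, leaving the remaining bookkeeping to the reader.
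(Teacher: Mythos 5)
Your proposal is correct and follows exactly the route the paper indicates: induction on $i$ using the relation $EY = X + q^{-1}YE$, together with the commutation rules $Y^iX = q^{-i}XY^i$ and $E^iX = q^iXE^i$; the coefficient identities you isolate (e.g. $q^{-2i} + \frac{q^{-2i}-1}{q^{-2}-1} = \frac{q^{-2(i+1)}-1}{q^{-2}-1}$) are precisely the verifications the paper leaves to the reader. No gap; your write-up simply supplies the details the paper omits.
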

\begin{lemm}\label{cc1}
If $q$ is a primitive $l$-th root of unity ($l\geq 3$), then $K^{\pm l},E^{l},X^{l}$ and $Y^{l}$ are contained in the center of $\mathcal{A}$.
\end{lemm}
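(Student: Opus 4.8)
The plan is to verify that each of $K^{\pm l}$, $E^l$, $X^l$, and $Y^l$ commutes with every generator of $\mathcal{A}$. Since $\mathcal{A}$ is generated by $E, K, K^{-1}, X, Y$, and since commuting with $K$ automatically forces commuting with $K^{-1}$, it suffices to check that each candidate central element commutes with the four generators $E, K, X, Y$ (self-commutation being trivial).

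First I would dispatch all the purely quasi-commuting pairs at once. Every relation in \eqref{e1} other than $EY = X + q^{-1}YE$ has the form $AB = q^{m}BA$ for a suitable integer $m$; raising the left factor to the $l$-th power and commuting it past the right factor yields the scalar $q^{ml}$, which equals $1$ because $q^{l}=1$. In this way one reads off immediately that $K^{\pm l}$ commutes with $E, X, Y$ (using $EK=q^{-2}KE$, $XK=q^{-1}KX$, $YK=qKY$); that $X^{l}$ commutes with $E, K, Y$ (using $EX=qXE$, $XK=q^{-1}KX$, $XY=qYX$); that $E^{l}$ commutes with $K$ and $X$; and that $Y^{l}$ commutes with $K$ and $X$. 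The only interactions left unresolved, and the crux of the argument, both stem from the single non-quasi-commuting relation $EY=X+q^{-1}YE$: the commutation of $E^{l}$ with $Y$ and of $Y^{l}$ with $E$.

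These two are exactly where Lemma \ref{c1} enters. Setting $i=l$ in part (ii) gives $YE^{l}=q^{l}E^{l}Y-\frac{q(1-q^{2l})}{1-q^{2}}XE^{l-1}$; since $q^{l}=1$ the leading coefficient is $1$, while the correction term vanishes because $q^{2l}=1$, whence $YE^{l}=E^{l}Y$. Dually, setting $i=l$ in part (i) gives $EY^{l}=q^{-l}Y^{l}E+\frac{q^{-2l}-1}{q^{-2}-1}XY^{l-1}$, where $q^{-l}=1$ and the numerator $q^{-2l}-1$ of the correction term again vanishes, so $EY^{l}=Y^{l}E$. Thus $E^{l}$ and $Y^{l}$ each commute with the one remaining generator, completing the verification.

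The only delicate point, and the step I would state most carefully, is this vanishing of the correction terms: it rests on $q$ being a primitive $l$-th root of unity so that $q^{2l}=1$, and one should simultaneously note that the denominators $q^{-2}-1$ and $1-q^{2}$ are nonzero precisely because $q^{2}\neq 1$ (guaranteed by the standing hypothesis $l\geq 3$), so the identities of Lemma \ref{c1} are legitimate to invoke. With that observed, every generator commutes with each of $K^{\pm l}, E^{l}, X^{l}, Y^{l}$, and hence all four elements lie in the center of $\mathcal{A}$.
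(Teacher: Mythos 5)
Your proof is correct and follows exactly the route the paper intends: the quasi-commuting relations in \eqref{e1} handle all pairs after raising to the $l$-th power, and Lemma \ref{c1} with $i=l$ settles the two $E$--$Y$ interactions, which is precisely why the paper states Lemma \ref{c1} immediately before this lemma with the remark ``With this we have.'' Your added care about the nonvanishing denominators (from $q^2\neq 1$, guaranteed by $l\geq 3$) is a worthwhile detail the paper leaves implicit.
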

These two results will be used throughout this article.
\begin{prop} \label{finite}
The algebra $\mathcal{A}$ is a PI algebra if and only if $q$ is a root of unity.
\end{prop}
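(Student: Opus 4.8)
The plan is to prove both directions of the equivalence separately, with the forward direction (PI implies $q$ a root of unity) being essentially a structural observation and the backward direction (root of unity implies PI) following from the earlier lemmas.

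For the backward direction, suppose $q$ is a primitive $l$-th root of unity with $l \geq 3$. The strategy is to exhibit $\mathcal{A}$ as a finitely generated module over a central subalgebra and then invoke Proposition~\ref{f}. By Lemma~\ref{cc1}, the elements $X^l, Y^l, E^l$ and $K^{\pm l}$ all lie in the center $Z(\mathcal{A})$. Let $C$ be the commutative subalgebra of $\mathcal{A}$ generated by these central elements. Using the PBW-type basis $\{X^aY^bE^cK^{\pm d}\}$ from Proposition~\ref{imp}, I would argue that every basis monomial can be written, after reducing each exponent $a,b,c,d$ modulo $l$, as a product of one of the finitely many monomials $X^aY^bE^cK^d$ with $0 \leq a,b,c,d < l$ (together with the $K$-sign) times an element of $C$. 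Hence $\mathcal{A}$ is generated as a $C$-module by this finite set of $l^3 \cdot (\text{finitely many } K\text{-monomials})$ elements, so $\mathcal{A}$ is a finitely generated module over the commutative subring $C$. Proposition~\ref{f} then immediately yields that $\mathcal{A}$ is PI.

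For the forward direction, I would argue the contrapositive: if $q$ is not a root of unity, then $\mathcal{A}$ is not PI. The cleanest approach is to locate inside $\mathcal{A}$ a subalgebra known to fail the PI condition when $q$ is generic. The natural candidate is the quantum plane $\mathbb{K}_q[X,Y]$ with relation $XY = qYX$, which sits inside $\mathcal{A}$. It is a standard fact that the quantum plane is PI if and only if $q$ is a root of unity; when $q$ is not a root of unity one shows no polynomial identity can hold by testing monomials and comparing the scalars $q^{ij}$ that arise from reordering $X$ and $Y$, which are forced to be distinct. Since any subalgebra of a PI algebra is again PI, the failure of the quantum plane to be PI forces $\mathcal{A}$ itself to be non-PI.

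The main obstacle is the backward direction's module-finiteness bookkeeping: one must verify carefully that reducing exponents modulo $l$ really does express each basis monomial as a central element times a fixed generator, which requires knowing that the central powers $X^l, Y^l, E^l, K^l$ can be freely pulled out past the remaining low-degree factors. This is exactly where the centrality statement of Lemma~\ref{cc1} does the essential work, since it guarantees these powers commute with everything and so the reordering introduces no obstructing scalar. The forward direction is comparatively routine once the quantum plane is identified as a subalgebra, the only care being to confirm that the inclusion $\mathbb{K}_q[X,Y] \hookrightarrow \mathcal{A}$ is genuine, which follows from the basis in Proposition~\ref{imp}.
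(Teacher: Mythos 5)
Your proposal is correct and follows essentially the same route as the paper: the backward direction realizes $\mathcal{A}$ as a finite module over the central subalgebra generated by $X^l, Y^l, E^l, K^{\pm l}$ (Lemma~\ref{cc1}) and applies Proposition~\ref{f}, while the forward direction invokes the failure of the quantum plane subalgebra $\mathbb{K}\langle X,Y\rangle$ with $XY=qYX$ to be PI when $q$ is not a root of unity, exactly as the paper does by citing the standard reference. The only cosmetic difference is that you spell out the exponent-reduction bookkeeping and sketch why the quantum plane is non-PI, where the paper simply cites the known results.
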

\begin{proof}
Suppose $q$ be a primitive $l$-th root of unity. Let $Z$ be the subalgebra of $\mathcal{A}$ generated by $K^{\pm l},E^l,X^l$ and $Y^l$. Then by Lemma $\ref{cc1}$, the subalgebra $Z$ is central. Now one can easily verify that $\mathcal{A}$ is a finitely generated module over this central subalgebra $Z$, with basis $$\{X^{a}Y^{b}E^{c}K^{\pm d}~|~0\leq a,b,c,d< l\}.$$ Hence it follows from Proposition \ref{f} that $\mathcal{A}$ is PI algebra.
\par For the converse just note that the $\mathbb{K}$-subalgebra of $\mathcal{A}$ generated by $X$ and $Y$ with relation $XY=qYX$ is not PI if $q$ is not a root of unity. (cf. \cite[Proposition I.14.2.]{brg})
\end{proof}
\begin{rema}\label{re1}
Now the quantum spatial ageing algebra $\mathcal{A}$ being finite module over a central subalgebra, it follows from a standard result (cf. \cite[Proposition III.1.1.]{brg}) that every simple $\mathcal{A}$-module is finite dimensional vector space over $\mathbb{K}$.
\end{rema}
\par Primitive PI ring exhibit a particularly nice structure established in Kaplansky's Theorem (cf. \cite[Theorem 13.3.8]{mcr}).  Now Kaplansky's Theorem has a striking consequence in case of a prime affine PI algebra over an algebraically closed field.
\begin{prop}\emph{(\cite[Theorem I.13.5]{brg})}\label{sim}
Let $A$ be a prime affine PI algebra over an algebraically closed field $\mathbb{K}$, with PI-deg($A$) = $n$ and $V$ be an simple $A$-module. Then $V$ is a vector space over $\mathbb{K}$ of dimension $t$, where $t \leq n$, and $A/ann_A(V) \cong M_t(\mathbb{K})$.
\end{prop}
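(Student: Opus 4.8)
The plan is to derive the statement from three classical pillars: the Artin--Tate lemma on the structure of prime affine PI algebras, Kaplansky's theorem on primitive PI rings (already recalled above), and the Nullstellensatz. First I would invoke the Artin--Tate lemma: since $A$ is a prime affine PI $\mathbb{K}$-algebra, its centre $Z := Z(A)$ is an affine $\mathbb{K}$-algebra and $A$ is a finitely generated module over $Z$. This finiteness is what ultimately forces every simple module to be finite dimensional and what controls the centre of the relevant quotient.

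Next, set $P := \ann_A(V)$. As $V$ is simple, $P$ is a primitive ideal, and $\bar A := A/P$ acts faithfully and irreducibly on $V$, so $\bar A$ is a primitive PI ring whose unique simple module is $V$. Applying Kaplansky's theorem to $\bar A$, I obtain that $\bar A$ is simple and finite dimensional over its centre $F := Z(\bar A)$, with $F$ a field; in particular $\bar A \cong M_t(D)$ for a division algebra $D$ with $Z(D) = F$, and $V$ is the column module $D^t$.

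The crux of the argument --- and the step I expect to be the main obstacle --- is to identify $F$ with $\mathbb{K}$. Here is where the hypotheses \emph{affine} and \emph{algebraically closed} combine. The image $\bar Z$ of $Z$ in $\bar A$ is a commutative affine $\mathbb{K}$-algebra contained in $F$, and $\bar A$ remains module-finite over $\bar Z$ since it is a homomorphic image of $A$. Because $F$ is a $\bar Z$-submodule of the Noetherian $\bar Z$-module $\bar A$, it is a finitely generated $\bar Z$-module, hence $F$ is an affine $\mathbb{K}$-algebra that is also a field. By Zariski's lemma (the weak Nullstellensatz), $F$ is a finite algebraic extension of $\mathbb{K}$, and algebraic closedness of $\mathbb{K}$ yields $F = \mathbb{K}$.

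Finally I would close the argument using the triviality of finite-dimensional division algebras over an algebraically closed field: since $D$ is finite dimensional over $F = \mathbb{K}$ and $\mathbb{K}$ is algebraically closed, necessarily $D = \mathbb{K}$. Therefore $A/\ann_A(V) = \bar A \cong M_t(\mathbb{K})$, and the simple module $V \cong \mathbb{K}^t$ satisfies $\dim_{\mathbb{K}} V = t$. The bound $t \le n$ then follows because $\pideg(M_t(\mathbb{K})) = t$, while the PI-degree of any homomorphic image is at most that of the original ring; thus $t = \pideg(\bar A) \le \pideg(A) = n$.
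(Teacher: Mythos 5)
Your overall skeleton---Kaplansky, Artin--Tate, Nullstellensatz---is exactly the standard route to this result (the paper itself offers no proof, simply citing Brown--Goodearl, Theorem I.13.5, where that route is carried out). But your very first step is wrong, and it is the step on which the part you call the crux depends. You claim, attributing this to the Artin--Tate lemma, that a prime affine PI algebra $A$ is a finitely generated module over its centre $Z(A)$ and that $Z(A)$ is affine. The Artin--Tate lemma asserts the converse implication: if $A$ is affine over $\mathbb{K}$ and is a finitely generated module over a central subalgebra $C$, then $C$ is affine. Module-finiteness is its \emph{hypothesis}, not its conclusion, and no theorem supplies it here: it is false in general that a prime affine PI algebra is module-finite over its centre, or even that its centre is an affine algebra (counterexamples are classical; Sarraill\'e constructed a prime affine Noetherian PI ring that is not module-finite over any commutative subring). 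Consequently your derivation that $F := Z(A/P)$ is affine---routed through the image $\bar{Z}$ of $Z(A)$ and a Noetherian-module argument inside $\bar{A}$---collapses, and with it the identification $F = \mathbb{K}$.

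The repair is short, and you already hold the needed ingredient: Kaplansky's theorem, which you invoke anyway, gives that $\bar{A} = A/\ann_A(V)$ is finite-dimensional over its centre $F$, in particular a finitely generated $F$-module. So apply Artin--Tate to the tower $\mathbb{K} \subseteq F \subseteq \bar{A}$ instead of to $A$: the algebra $\bar{A}$ is affine over $\mathbb{K}$ (being a quotient of an affine algebra) and module-finite over the central subfield $F$, hence $F$ is an affine $\mathbb{K}$-algebra; being also a field, Zariski's lemma makes it a finite extension of $\mathbb{K}$, so $F = \mathbb{K}$ by algebraic closedness, and then $D = \mathbb{K}$ exactly as you argue. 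From there your conclusions $A/\ann_A(V) \cong M_t(\mathbb{K})$ and $\dim_{\mathbb{K}} V = t$, and your closing argument for $t \leq n$ (a quotient satisfies every identity of $A$, and $\pideg M_t(\mathbb{K}) = t$), are correct as written. Note, finally, that primeness of $A$ enters only to make $\pideg(A)$ well defined and to give this last bound; the structure of $\bar{A}$ needs no such hypothesis.
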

The above proposition yields the important link between the PI degree of a prime affine PI algebra over an algebraically closed field and its irreducible representations. Thus from Proposition $\ref{finite}$ and Proposition $\ref{imp}$ along with Proposition $\ref{sim}$, it is quite clear that each simple $\mathcal{A}$-module is finite dimensional and can have dimension at most $\pideg( \mathcal{A})$. In the next subsection we shall discuss about the PI degree of quantum affine space which help us to obtain an explicit expression of PI degree of $\mathcal{A}$ in Section $3$.
\subsection{PI Degree of Quantum Affine Spaces} The following result of De Concini and Procesi provides one of the key techniques for calculating the PI degree of a quantum affine space.
\begin{prop}\emph{(\cite[Proposition 7.1]{di})}\label{quan}
Let $\mathbf{q}=\left(q_{ij}\right)$ be an  $n \times n$ multiplicatively antisymmetric matrix over $\mathbb{K}$.
 Suppose that $q_{ij}=q^{h_{ij}}$ for all $i,j$, where $q \in \mathbb{K}^*$ is a primitive $m$-th root of unity and the $h_{ij} \in \mathbb{Z}$. Let $h$ be the cardinality of the image of the homomorphism 
\[
    \mathbb{Z}^n \xrightarrow{(h_{ij})} \mathbb{Z}^n \xrightarrow{\pi} \left(\mathbb{Z}/m\mathbb{Z}\right)^n,
\]
where $\pi$ denotes the canonical epimorphism. Then \[\pideg(\mathcal{O}_{\mathbf{q}}(\mathbb{K}^n))=\pideg(\mathcal{O}_{\mathbf{q}}\left(\mathbb{(K^*)}^n\right)=\sqrt{h}.\]
\end{prop}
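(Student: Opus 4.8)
The plan is to reduce everything to the quantum torus $\mathcal{O}_{\mathbf{q}}((\mathbb{K}^*)^n)$ and then to compute its PI-degree through its centre. Since $\mathcal{O}_{\mathbf{q}}(\mathbb{K}^n)$ is a Noetherian domain in which each generator $x_i$ is a normalizing element, inverting $x_1,\dots,x_n$ realizes $\mathcal{O}_{\mathbf{q}}((\mathbb{K}^*)^n)$ as an Ore localization of $\mathcal{O}_{\mathbf{q}}(\mathbb{K}^n)$. An Ore localization does not change the simple Artinian Goldie quotient ring, and for a prime affine PI algebra Posner's theorem identifies the PI-degree with the square root of the dimension of this quotient ring over the field of fractions of its centre; hence the two algebras have the same PI-degree, and it suffices to treat the torus.

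For the torus write $H=(h_{ij})$, which is antisymmetric because $q_{ij}q_{ji}=1$. A Laurent monomial $x^{\mathbf{a}}=x_1^{a_1}\cdots x_n^{a_n}$ is central if and only if it commutes with every $x_j$, i.e. $\sum_i a_i h_{ij}\equiv 0 \pmod m$ for all $j$; equivalently $\mathbf{a}$ lies in the kernel $\Lambda$ of the composite $\mathbb{Z}^n \xrightarrow{(h_{ij})}\mathbb{Z}^n \xrightarrow{\pi}(\mathbb{Z}/m\mathbb{Z})^n$, a lattice containing $m\mathbb{Z}^n$. Thus the centre is the commutative Laurent algebra $Z=\mathbb{K}[x^{\pm\mathbf{a}}:\mathbf{a}\in\Lambda]$, over which the torus is free with basis indexed by $\mathbb{Z}^n/\Lambda$. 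By the first isomorphism theorem $\mathbb{Z}^n/\Lambda$ is isomorphic to the image of $\pi\circ(h_{ij})$, so the rank of the torus over $Z$ is exactly $h$.

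It remains to show the torus is an Azumaya algebra of constant rank $h$ over $Z$, for then its PI-degree is $\sqrt{h}$. The torus is a twisted group algebra of the finite abelian group $G:=\mathbb{Z}^n/\Lambda$ over $Z$, whose defining commutation scalars form an alternating bicharacter $G\times G\to\mathbb{K}^*$ that is nondegenerate precisely because $\Lambda$ is the full kernel; such a twisted group algebra is Azumaya over its centre. Concretely, using the symplectic normal form for alternating integer matrices one finds $P\in GL_n(\mathbb{Z})$ with $P^{T}HP$ block diagonal with blocks $\left(\begin{smallmatrix}0 & d_i\\ -d_i & 0\end{smallmatrix}\right)$, $1\le i\le s$, together with a zero block; the corresponding monomial change of variables $x_i\mapsto\prod_j x_j^{p_{ij}}$ is an algebra isomorphism exhibiting
\[
\mathcal{O}_{\mathbf{q}}((\mathbb{K}^*)^n)\;\cong\;\Big(\bigotimes_{i=1}^{s}\mathbb{K}_{q^{d_i}}[z_i^{\pm1},w_i^{\pm1}]\Big)\otimes_{\mathbb{K}}\mathbb{K}[u_1^{\pm1},\dots,u_{n-2s}^{\pm1}].
\]
Each rank-two factor is Azumaya with PI-degree equal to the order $m/\gcdi(m,d_i)$ of $q^{d_i}$, the commutative factor contributes $1$, and PI-degree is multiplicative over these tensor factors; since the image of $\pi\circ H$ then has cardinality $h=\prod_{i=1}^{s}\big(m/\gcdi(m,d_i)\big)^2$, we obtain $\pideg=\prod_{i=1}^{s} m/\gcdi(m,d_i)=\sqrt{h}$, which in particular is an integer.

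The main obstacle is the Azumaya property, namely that the nondegenerate alternating form forces the rank to be a perfect square and makes the algebra a matrix algebra over its centre; the symplectic normal form over $\mathbb{Z}$ is the tool that settles this. The subtleties there are checking that a monomial substitution really is an isomorphism transforming $H$ by the congruence $P^{T}HP$, and that the cardinality $h$ is congruence-invariant (immediate, since the reduction of $P$ modulo $m$ is invertible). The reduction to the torus and the multiplicativity of PI-degree over the tensor decomposition are then routine.
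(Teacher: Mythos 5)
Your proposal is correct; the only thing to be aware of is that the paper itself contains no proof of this statement to compare against --- Proposition \ref{quan} is imported verbatim from De Concini--Procesi \cite[Proposition 7.1]{di}, and the paper only uses it as a black box (together with Lemma \ref{mainpi}) to compute $\pideg\mathcal{A}$. What you have written is essentially the standard proof of that cited result, and all three steps are sound: (i) the reduction to the torus via Ore localization and invariance of the Goldie quotient ring is exactly the mechanism the paper invokes elsewhere through \cite[Corollary I.13.3]{brg}; (ii) the identification of the centre with the group algebra of $\Lambda=\kere(\pi\circ(h_{ij}))$, making the torus free of rank $h=[\mathbb{Z}^n:\Lambda]$ over its centre, is right --- you should just state explicitly the (easy) fact that conjugation by each generator acts diagonally on the monomial basis, so the centre is indeed \emph{spanned} by central monomials rather than merely containing them; (iii) in the Azumaya step, the multiplicativity of PI-degree over your tensor factors is legitimate precisely because each factor $\mathbb{K}_{q^{d_i}}[z_i^{\pm1},w_i^{\pm1}]$ is Azumaya of rank $\left(m/\gcdi(d_i,m)\right)^2$ over its centre and tensor products of Azumaya algebras are Azumaya, so it is worth saying that this, rather than any general multiplicativity of PI-degree, is what you are using. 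A pleasant byproduct of your route: the normal-form computation $h=\prod_{i=1}^{s}\left(m/\gcdi(d_i,m)\right)^2$, hence $\pideg=\prod_{i=1}^{s}m/\gcdi(d_i,m)$, is precisely the content of Lemma \ref{mainpi} (quoted by the paper from Rogers' thesis), so your single argument simultaneously proves both external results the paper relies on for its PI-degree computations.
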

It is well known that a skew symmetric matrix over $\mathbb{Z}$ such as our matrix $H:=(h_{ij})$ can be brought into a $2\times 2$ block diagonal form (commonly known as skew normal form): 
\[\diagonal\left(\begin{pmatrix}
0&h_1\\
-h_1&0
\end{pmatrix},\cdots,\begin{pmatrix}
0&h_s\\
-h_s&0
\end{pmatrix},\bf{0}\right),\]
where $\bf{0}$ is the square matrix of zeros of dimension equals $\dime(\kere H)$, so that $2s=\ran(H)=n-\dime(\kere H)$ and $h_i\mid h_{i+1}\in \mathbb{Z}\setminus\{0\},~~\forall \ \ 1\leq i \leq s-1$. This non zero $h_1,h_1,\cdots,h_s,h_s$ are called the invariant factors of $H$. The following result simplifies the calculation of $h$ in the statement of Proposition \ref{quan} by the properties of the integral matrix $H$, namely dimension of its kernel, along with its invariant factors and the value of $m$.
\begin{lemm}\emph{(\cite[Lemma 5.7]{ar})}\label{mainpi}
Take $1\neq q\in \mathbb{K}^*$, a primitive $m$-th root of unity. Let $H$ be a skew symmetric integral matrix associated to $\mathbf{q}$ with invariant factors $h_1,h_1,\cdots,h_s,h_s$. Then PI degree of $\mathcal{O}_{\mathbf{q}}(\mathbb{K}^n)$ is given as \[\pideg(\mathcal{O}_{\mathbf{q}}(\mathbb{K}^n))=\pideg(\mathcal{O}_{\mathbf{q}}\left(\mathbb{(K^*)}^n\right)=\prod_{i=1}^{\frac{n-\dime(\kere H)}{2}}\frac{m}{\gcdi(h_i,m)}.\]
\end{lemm}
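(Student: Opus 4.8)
The plan is to start from the De Concini--Procesi formula in Proposition \ref{quan}, which reduces everything to computing the cardinality $h$ of the image of the composite $\mathbb{Z}^n \xrightarrow{H} \mathbb{Z}^n \xrightarrow{\pi} (\mathbb{Z}/m\mathbb{Z})^n$, and then to evaluate this cardinality through the skew normal form of $H$. The whole argument rests on a single observation: if $P \in GL_n(\mathbb{Z})$, then its reduction modulo $m$ lies in $GL_n(\mathbb{Z}/m\mathbb{Z})$, and hence induces an automorphism $\bar{P}$ of $(\mathbb{Z}/m\mathbb{Z})^n$ satisfying $\pi \circ P = \bar{P} \circ \pi$ (because reduction mod $m$ commutes with multiplication by an integer matrix).

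First I would show that $h$ is invariant under \emph{congruence} by unimodular matrices. Writing the image as $\pi(H\mathbb{Z}^n)$ and replacing $H$ by $P^T H P$ with $P \in GL_n(\mathbb{Z})$, the surjectivity of $P$ on $\mathbb{Z}^n$ gives $(P^T H P)\mathbb{Z}^n = P^T H \mathbb{Z}^n$, while the commutation above turns $\pi(P^T H \mathbb{Z}^n)$ into $\bar{P^T}\!\left(\pi(H\mathbb{Z}^n)\right)$; since $\bar{P^T}$ is a bijection, the cardinality is unchanged. Consequently I may replace $H$ by its skew normal form $J = \diagonal(B_1, \ldots, B_s, \mathbf{0})$ with $B_i = \begin{pmatrix} 0 & h_i \\ -h_i & 0 \end{pmatrix}$, and compute $h = |\pi(J\mathbb{Z}^n)|$ instead.

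Next I would compute this cardinality block by block. The image of a single block $B_i$ acting on $\mathbb{Z}^2$ is $h_i\mathbb{Z} \times h_i\mathbb{Z}$, whose image under $\pi$ is the product of two copies of the cyclic subgroup $\langle h_i \rangle \leq \mathbb{Z}/m\mathbb{Z}$, and that subgroup has order $m/\gcdi(h_i,m)$. The trailing zero block, of size $\dime(\kere H)$, contributes only the trivial subgroup. Since the total image is the direct product over the blocks, I obtain $h = \prod_{i=1}^s \left(m/\gcdi(h_i,m)\right)^2$, and taking the square root as prescribed by Proposition \ref{quan} yields the asserted formula, with $s = \frac{n - \dime(\kere H)}{2}$ by the very definition of the skew normal form.

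The step I expect to be the main obstacle is the congruence-invariance of $h$: one must keep in mind that the passage to skew normal form proceeds by a congruence $H \mapsto P^T H P$ rather than by a Smith-type equivalence, and verify that such a transformation genuinely preserves the image cardinality. The subtlety is that only the left factor $P^T$ needs to act as an automorphism of $(\mathbb{Z}/m\mathbb{Z})^n$, whereas the right factor $P$ is absorbed by its surjectivity on $\mathbb{Z}^n$; keeping these two roles separate is what makes the count go through, and it is precisely the point where the integrality (unimodularity) of the transforming matrix is indispensable.
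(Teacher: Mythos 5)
The paper itself contains no proof of this lemma: it is quoted as a known result from Rogers' thesis \cite[Lemma 5.7]{ar}, so there is no internal argument to compare against. Your proof is correct and is essentially the standard argument (the one given in the cited source): the cardinality $h$ from Proposition \ref{quan} is invariant under unimodular congruence, since $P\mathbb{Z}^n=\mathbb{Z}^n$ absorbs the right-hand factor and the reduction of $P^T$ lies in $GL_n(\mathbb{Z}/m\mathbb{Z})$ and so merely permutes the image, after which the block-by-block computation in skew normal form gives $h=\prod_{i=1}^{s}\left(m/\gcdi(h_i,m)\right)^2$ with $s=\tfrac{n-\dime(\kere H)}{2}$, and taking the square root yields the stated formula. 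Your closing remark also identifies the genuine subtlety correctly: the passage to normal form is a congruence $H\mapsto P^THP$, not a Smith-type equivalence, and the two factors of $P$ play the distinct roles you describe.
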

In the next section we will focus on computing the PI degree of quantum spatial ageing algebra. 

\section{\bf{PI degree for $\mathcal{A}$}}
In this section we wish to calculate the PI-degree of $\mathcal{A}$ with the help of Proposition \ref{quan} and Lemma \ref{mainpi} along with a result on derivation erasing in \cite[Theorem 7]{lm2}.
\par First recall the subalgebra $\mathbb{E}$ generated by $X,Y,E$. Let $\mathcal{B}$ be the subalgebra of $\mathcal{A}$ generated by the $K$ and the elements of $\mathbb{E}$. Then $\mathcal{B}$ is an iterated skew polynomial ring with respect to the ordering of variables $X,Y,E,K$ of the form: 
\[\mathcal{B}=\mathbb{K}[X][Y,\alpha][E,\beta,\delta][K,\tau]=\mathbb{E}[K,\tau],\]
where the $\mathbb{K}$-linear maps $\alpha,\beta,\tau$ and $\delta$ are as defined in $(\ref{r1})$ and $(\ref{r2})$. Observe that $\mathcal{B}\subset \mathcal{A}\subset \cf\mathcal{B}$ and hence $\pideg \mathcal{A}=\pideg \mathcal{B}$ (cf. \cite[Corollary I.13.3]{brg}). Note that the skew relation $\beta \delta=q^2\delta \beta,~(q^2\neq 1)$ is satisfied on $\mathbb{K}[X][Y,\alpha]$. Then the derivation erasing process (independent of characteristic) in \cite[Theorem 7]{lm2} provides $\cf {\mathcal{B}}\cong \cf \mathcal{O}_{\mathbf{q}}(\mathbb{K}^{4})$, where the $4\times 4$ matrix of relations $\mathbf{q}$ is 
\[\mathbf{q}=\begin{pmatrix}
1&q&q^{-1}&q^{-1}\\
q^{-1}&1&q&q\\
q&q^{-1}&1&q^{-2}\\
q&q^{-1}&q^{2}&1
\end{pmatrix}.\]
The power of $q$ from the matrix $\mathbf{q}$ give a $4\times 4$ integral matrix 
\[\mathbf{q}'=\begin{pmatrix}
0&1&-1&-1\\
-1&0&1&1\\
1&-1&0&-2\\
1&-1&2&0
\end{pmatrix}.\]
Then $\pideg \mathcal{A}$ can be computed using the integral matrix $\mathbf{q}'$ in the Proposition \ref{quan} with the help of Lemma \ref{mainpi}.  The cardinality of the image will not be changed if we first perform some elementary reductions on $\mathbf{q}'$. Now we can apply following elementary operations on $\mathbf{q}'$ to reduce it into a simpler form:
\begin{itemize}
    \item Replace $\rowa~(1)$ with $\rowa~(1)+\rowa~(2)$ and replace $\colu~(1)$ with $\colu~(1) +\colu~(2)$.
    \item Replace $\rowa~(3)$ with $\rowa~(3)+\rowa~(1)$ and replace $\colu~(3)$ with $\colu~(3)+ \colu~(1)$.
    \item Replace $\rowa~(4)$ with $\rowa~(4)+\rowa~(1)$ and replace $\colu~(4)$ with $\colu~(4)+ \colu~(1)$.
\end{itemize}
Then we have $4\times 4$ integral matrix $\mathbf{q}^{''}$ of the form 
\[\mathbf{q}^{''}=\begin{pmatrix}
0&1&0&0\\
-1&0&0&0\\
0&0&0&-2\\
0&0&2&0
\end{pmatrix}.\]
Thus the integral matrices $\mathbf{q}^{'}$ and $\mathbf{q}^{''}$ are congurent and hence they have the same invariant factors $1,1,2,2$. Therefore from above discussions along with the help of Lemma \ref{mainpi} we have 
$$\text{PI deg}~\mathcal{A}=\text{PI deg}~\mathcal{B}=\text{PI deg}~\mathcal{O}_{\mathbf{q}}(\mathbb{K}^4)=
\begin{cases}
 l^2,&  l~ \text{odd}\\
 \frac{l^2}{2}, &  l~ \text{even}
\end{cases}$$
\begin{propn}
Let $q$ be a primitive $l$-th root of unity. Then $\pideg \mathcal{A}=
\begin{cases}
 l^2,&  l~ \text{odd}\\
 \frac{l^2}{2}, &  l~ \text{even}
 \end{cases}$
\end{propn}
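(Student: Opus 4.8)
The plan is to follow exactly the machinery the paper has already assembled, so the proposition becomes essentially a verification. The strategy relies on the chain of reductions $\pideg \mathcal{A} = \pideg \mathcal{B} = \pideg \mathcal{O}_{\mathbf{q}}(\mathbb{K}^4)$, where the first equality comes from $\mathcal{B} \subset \mathcal{A} \subset \cf \mathcal{B}$ together with \cite[Corollary I.13.3]{brg}, and the second from the derivation-erasing theorem \cite[Theorem 7]{lm2}, which is applicable precisely because the skew relation $\beta\delta = q^2\delta\beta$ (with $q^2 \neq 1$) holds on $\mathbb{K}[X][Y,\alpha]$. Once these identifications are in place, everything reduces to an application of Lemma \ref{mainpi} to the integral matrix $\mathbf{q}'$ encoding the powers of $q$.

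The key computational steps, in order, are as follows. First I would read off the multiplicatively antisymmetric matrix $\mathbf{q}$ from the defining commutation relations among $X, Y, E, K$ in $\mathcal{B}$, and extract the integral exponent matrix $\mathbf{q}'$. Next, since Lemma \ref{mainpi} depends only on the invariant factors and $\dime(\kere H)$, and since congruent integral skew-symmetric matrices share the same invariant factors, I would perform the three pairs of simultaneous row/column operations (congruence transformations) to bring $\mathbf{q}'$ into the block-diagonal form $\mathbf{q}''$. From $\mathbf{q}''$ one reads off directly that the matrix is nonsingular (so $\dime(\kere H) = 0$, giving a product of $n/2 = 2$ factors) and that the invariant factors are $1, 1, 2, 2$. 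Finally, I would substitute $h_1 = 1$ and $h_2 = 2$ with $m = l$ into the formula of Lemma \ref{mainpi}, obtaining
\[
\pideg \mathcal{A} = \frac{l}{\gcdi(1,l)} \cdot \frac{l}{\gcdi(2,l)} = \frac{l^2}{\gcdi(2,l)},
\]
and the final case split follows since $\gcdi(2,l) = 1$ when $l$ is odd and $\gcdi(2,l) = 2$ when $l$ is even.

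The main obstacle — or rather the only genuinely nontrivial verification — is justifying that the derivation-erasing step legitimately produces $\cf \mathcal{B} \cong \cf \mathcal{O}_{\mathbf{q}}(\mathbb{K}^4)$ with the correct matrix $\mathbf{q}$. One must confirm that the hypotheses of \cite[Theorem 7]{lm2} are met at the relevant stage of the iterated skew-polynomial construction, namely that the $\beta$-derivation $\delta$ satisfies the $q^2$-skew relation with $\beta$, so that $\delta$ can be erased without altering the division ring of fractions; the remaining pure-automorphism relations then assemble into a quantum affine space. After that, the congruence reduction is purely mechanical: I would simply check that each stated row operation paired with its transposed column operation preserves skew-symmetry and correctly clears the off-diagonal entries in positions $(1,3), (1,4), (2,3), (2,4)$, leaving the two independent $2\times 2$ blocks with entries $\pm 1$ and $\pm 2$. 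Since all of these are elementary, I expect no serious difficulty once the erasing step is cited, and the proposition follows immediately from the displayed computation preceding it.
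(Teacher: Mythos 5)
Your proposal is correct and follows essentially the same route as the paper: the reduction $\pideg\mathcal{A}=\pideg\mathcal{B}=\pideg\mathcal{O}_{\mathbf{q}}(\mathbb{K}^4)$ via \cite[Corollary I.13.3]{brg} and the derivation-erasing theorem of \cite[Theorem 7]{lm2}, followed by the congruence reduction of $\mathbf{q}'$ to the block form with invariant factors $1,1,2,2$ and the application of Lemma \ref{mainpi} with $m=l$. The only cosmetic difference is that you make the final substitution $\frac{l^2}{\gcdi(2,l)}$ explicit, which the paper leaves implicit in its case split.
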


\section{\bf{Construction of Simple $\mathcal{A}$-Modules}}
In this section we wish to construct simple $\mathcal{A}$-modules depending on some scalar parameters. Assume that $q$ be a primitive $l$-th root of unity ($l\geq 3$). Also we let 
\[l_1:=\ord (q^2)=\begin{cases}
     l,& l~\text{odd}\\
     \frac{l}{2},& l~\text{even.}
     \end{cases}\]
\subsection{Simple modules of type $M_1(\underline{\mu})$} For $\underline{\mu}:=(\mu_1,\mu_2,\mu_3,\mu_4)\in \mathbb{({K}^*)}^4$, let us consider the $\mathbb{K}$-vector space $M_1(\underline{\mu})$ with basis
\[\{e(a_1,a_2)~|~0\leq a_1 \leq l_1-1,~0\leq a_2 \leq l-1\}.\]
Define an ${\mathcal{A}}$-module structure on the $\mathbb{K}$-space $M_1(\underline{\mu})$ as follows: 
\[
 e(a_1,a_2)X=\mu_1q^{a_1+a_2}e(a_1,a_2),\ \
 e(a_1,a_2)K^{\pm 1}= \mu_4^{\pm 1}e(a_1,a_2+(\pm 1)).\]
 When $l$ is odd, the action of $E$ and $Y$ is given as follows
 \[\begin{array}{l}
 e(a_1,a_2)E= \mu_3q^{2a_2}e(a_1+1,a_2),\\
 e(a_1,a_2)Y=\mu_3^{-1}q^{-(a_1+a_2)}\left(\displaystyle\frac{q\mu_2-q^{2a_1+1}\mu_1}{1-q^2}\right)e(a_1+(-1),a_2).\end{array}\]
When $l$ is even, the action of $E$ and $Y$ is defined as follows
\[\begin{array}{l}
 e(a_1,a_2)E=\begin{cases}
  \mu_3q^{2a_2}e(a_1\dotplus 1,a_2),& a_1\neq \frac{l}{2}-1\\
    \mu_3q^{2a_2}e\left(0,\frac{l}{2}+a_2\right),& a_1=\frac{l}{2}-1
 \end{cases} \\
e(a_1,a_2)Y=\begin{cases}
  \mu_3^{-1}q^{-(a_1+a_2)}\left(\displaystyle\frac{q\mu_2-q^{2a_1+1}\mu_1}{1-q^2}\right)e(a_1\dotplus (-1),a_2),& a_1\neq 0\\
    \mu_3^{-1}q^{-a_2}\left(\displaystyle\frac{q(\mu_2-\mu_1)}{1-q^2}\right)e\left(\frac{l}{2}-1,\frac{l}{2}+a_2\right),& a_1=0.\\
\end{cases}
 \end{array}\]
where $+$ and $\dotplus$ are addition modulo $l$ and $\frac{l}{2}$ respectively. In order to establish the well-definedness we need to check that the $\mathbb{K}$-endomorphisms of $M_1(\underline{\mu})$ defined by the above rules satisfy the relations $(\ref{e1})$. Indeed when $l$ is odd:
\[\begin{array}{ll}
e(a_1,a_2)EK&=\mu_3q^{2a_2}e(a_1+1,a_2)K\\
&=\mu_3\mu_4q^{2a_2}e(a_1+1,a_2+1)=q^{-2}e(a_1,a_2)KE,\\
e(a_1,a_2)YE&=\mu_3^{-1}q^{-(a_1+a_2)}\left(\displaystyle\frac{q\mu_2-q^{2a_1+1}\mu_1}{1-q^2}\right)e(a_1+(-1),a_2)E\\
&=q^{-a_1+a_2}\left(\displaystyle\frac{q\mu_2-q^{2a_1+1}\mu_1}{1-q^2}\right)e(a_1,a_2),\\
e(a_1,a_2)EY&=\mu_3q^{2a_2}e(a_1+1,a_2)Y\\
&=q^{-1-a_1+a_2}\left(\displaystyle\frac{q\mu_2-q^{2a_1+3}\mu_1}{1-q^2}\right)e(a_1,a_2)\\
&=e(a_1,a_2)X+q^{-1}e(a_1,a_2)YE.
\end{array}\]
Similarly the remaining relations can be easily verified. When $l$ is even:
\[\begin{array}{ll}
e\left(\frac{l}{2}-1,a_2\right)EK &=\mu_3q^{2a_2}e(0,\frac{l}{2}+a_2)K\\
&=\mu_3\mu_4q^{2a_2}e\left(0,\frac{l}{2}+a_2+1\right)=q^{-2}e\left(\frac{l}{2}-1,a_2\right)KE,\\
e\left(\frac{l}{2}-1,a_2\right)EY&=\mu_3q^{2a_2}e\left(0,\frac{l}{2}+a_2\right)Y\\
&=q^{-(\frac{l}{2}+a_2)}\left(\displaystyle\frac{q(\mu_2-\mu_1)}{1-q^2}\right)e\left(\frac{l}{2}-1,a_2\right)\\
&=e\left(\frac{l}{2}-1,a_2\right)X+q^{-1}e\left(\frac{l}{2}-1,a_2\right)YE,\\
e\left(0,a_2\right)YE&=\mu_3^{-1}q^{-a_2}\left(\displaystyle\frac{q(\mu_2-\mu_1)}{1-q^2}\right)e\left(\frac{l}{2}-1,\frac{l}{2}+a_2\right)E\\
&=q^{a_2}\left(\displaystyle\frac{q(\mu_2-\mu_1)}{1-q^2}\right)e\left(0,a_2\right),\\
e\left(0,a_2\right)EY&=q^{-1+a_2}\left(\displaystyle\frac{q\mu_2-q^{3}\mu_1}{1-q^2}\right)e(a_1,a_2)\\
&=e(0,a_2)X+q^{-1}e(0,a_2)YE.
\end{array}\] All the remaining relations can be easily verified. With this we have 
\begin{theo}\label{f1}
The module $M_1(\underline{\mu})$ is a simple ${\mathcal{A}}$-module of dimension $l_1l$.
\end{theo}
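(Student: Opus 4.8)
The plan is to establish the two assertions separately: the dimension count is immediate, while simplicity is the substantive claim. The dimension is clear from the construction, since $M_1(\underline{\mu})$ was given the basis $\{e(a_1,a_2)\}$ indexed by $0\le a_1\le l_1-1$ and $0\le a_2\le l-1$, so $\dim_{\mathbb{K}}M_1(\underline{\mu})=l_1 l$.

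For simplicity I would use the standard ``separate, then generate'' strategy, beginning by isolating two commuting operators that act diagonally in the given basis. The operator $X$ acts by the scalar $\mu_1 q^{a_1+a_2}$, and a short computation (already implicit in the well-definedness checks, e.g.\ $e(a_1,a_2)YE=q^{-a_1+a_2}\frac{q\mu_2-q^{2a_1+1}\mu_1}{1-q^2}e(a_1,a_2)$) shows that $YE$ also acts diagonally. Using the relations $XY=qYX$ and $EX=qXE$ one checks that $XYE=qYXE=YEX$, so $X$ and $YE$ are commuting semisimple operators and $M_1(\underline{\mu})$ decomposes into their joint eigenspaces.

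The heart of the argument, and the step I expect to be the main obstacle, is to show that these joint eigenspaces are all one-dimensional, i.e.\ that the pair of eigenvalues determines $(a_1,a_2)$. The $X$-eigenvalue fixes $s:=a_1+a_2 \bmod l$; within a fixed value of $s$ the $YE$-eigenvalue reduces, using $q^{l}=1$, to $\frac{q^{s-2a_1+1}\mu_2-q^{s+1}\mu_1}{1-q^2}$, whose only dependence on the index is through $q^{-2a_1}$. Since $l_1=\ord(q^2)$ and $a_1$ runs over $0,\dots,l_1-1$, the powers $q^{-2a_1}$ are pairwise distinct, so the $YE$-eigenvalue separates the admissible values of $a_1$; together with $s$ this then pins down $a_2$ as well. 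Hence distinct basis vectors carry distinct joint $(X,YE)$-eigenvalues, and this uniform use of $l_1=\ord(q^2)$ handles both parities of $l$ at once.

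It then follows that any nonzero submodule $N$, being invariant under the commuting diagonalizable operators $X$ and $YE$, is a sum of their joint eigenspaces and therefore contains some basis vector $e(a_1,a_2)$. To finish I would generate the whole module from this single vector: every structure constant for the actions of $K^{\pm1}$, $E$ and $Y$ is a nonzero scalar (a nonzero $\mu_i$ times a power of $q$), so repeatedly applying $K^{\pm1}$ reaches every $e(a_1,a_2')$, and applying $E$ cycles through every first index (in the even case the shift in the second index caused by the wrap-around of $E$ at $a_1=\frac{l}{2}-1$ is corrected by further applications of $K$). Thus $N$ contains the entire basis, whence $N=M_1(\underline{\mu})$ and $M_1(\underline{\mu})$ is simple.
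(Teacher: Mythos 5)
Your proof is correct and takes essentially the same route as the paper: the paper separates basis vectors inside a nonzero submodule using joint eigenvalues of the commuting pair $(X,\phi)$, and since $\phi=X+(q^{-1}-q)YE$ this is your pair $(X,YE)$ up to a linear change of coordinates, after which both arguments generate the whole module from a single basis vector (your uniform use of $l_1=\ord(q^2)$ just packages the paper's separate odd/even case check more cleanly). The one blemish is your parenthetical claim that the structure constants of $Y$ are nonzero: the coefficient $\frac{q\mu_2-q^{2a_1+1}\mu_1}{1-q^2}$ can vanish for one value of $a_1$ (namely when $\mu_2=q^{2a_1}\mu_1$), but this is harmless because your generation step only uses $E$ and $K^{\pm 1}$.
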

\begin{proof}
Let $P$ be a non-zero submodule of $M_1(\underline{\mu})$. We claim that $P$ contains a basis vector of the form $e(a_1,a_2)$. Indeed, any member $p\in P$ is a finite $\mathbb{K}$-linear combination of such vectors. i.e.,
\[
  p:=\sum_{\text{finite}} \lambda_k~e\left(a_1^{(k)},a_2^{(k)}\right)  
\]
for some $\lambda_k\in \mathbb{K}$. Suppose there exist two non-zero coefficients, say, $\lambda_u,\lambda_v$. Then in particular concentrate on the two distinct vectors $e\left(a_1^{(u)},a_2^{(u)}\right)$ and $e\left(a_1^{(v)},a_2^{(v)}\right)$.\\
\textbf{Case 1:} First consider 
\begin{equation}\label{yo}
    a_1^{(u)}+ a_2^{(u)}\not\equiv a_1^{(v)}+ a_2^{(v)}\ \  (\text{mod}\  l).
\end{equation} Then the vectors $e\left(a_1^{(u)},a_2^{(u)}\right)$ and $e\left(a_1^{(v)},a_2^{(v)}\right)$ are eigenvectors of $X$ associated with the eigenvalues
$\Lambda_u=\mu_1q^{a_1^{(u)}+a_2^{(u)}}$ and $\Lambda_v=\mu_1q^{a_1^{(v)}+a_2^{(v)}}$ respectively. Infact it follows from $(\ref{yo})$ that $\Lambda_u$ and $\Lambda_v$ are distinct.\\ 
\textbf{Case 2:} Next consider \begin{equation}\label{co}
a_1^{(u)}+ a_2^{(u)}\equiv a_1^{(v)}+ a_2^{(v)}~ (\text{mod}~ l),
\end{equation}
then the vectors $e\left(a_1^{(u)},a_2^{(u)}\right)$ and $e\left(a_1^{(v)},a_2^{(v)}\right)$ are eigenvectors of $\phi~(=EY-qYE)$ associated with the eigenvalues $\Lambda'_u=\mu_2q^{-a_1^{(u)}+a_2^{(u)}}$ and $\Lambda'_v=\mu_2q^{-a_1^{(v)}+a_2^{(v)}}$ respectively. We claim that $\Lambda'_u \neq \Lambda'_v$. Indeed,
\begin{equation}\label{coo}
    \Lambda'_u=\Lambda'_v\implies {-a_1^{(u)}+a_2^{(u)}}\equiv {-a_1^{(v)}+a_2^{(v)}}~(\text{mod}~l),
\end{equation}
and then form $(\ref{co})$ and $(\ref{coo})$ we obtain
\begin{equation}\label{to}
    2a_1^{(u)}\equiv 2a_1^{(v)}~(\text{mod}~l),\ \ 2a_2^{(u)}\equiv 2a_2^{(v)}~(\text{mod}~l).
\end{equation}
If $l$ is odd, the equation $(\ref{to})$ implies $e\left(a_1^{(u)},a_2^{(u)}\right)=e\left(a_1^{(v)},a_2^{(v)}\right)$, which is a contradiction. Also if $l$ is even, the equations $(\ref{to})$ and $(\ref{co})$ together implies
\[a_1^{(u)}\equiv a_1^{(v)}~(\text{mod}~\frac{l}{2}),\ \ a_2^{(u)}\equiv a_2^{(v)}~(\text{mod}~l)\] and hence $e\left(a_1^{(u)},a_2^{(u)}\right)=e\left(a_1^{(v)},a_2^{(v)}\right)$, which is a contradiction.
\par Now in either case $pX-\Lambda_up$ or $p\phi-\Lambda'_up$ is a non zero element in $P$ of smaller length than $p$. Hence by induction it follows that every non zero submodule of $M_1(\underline{\mu})$ contains a basis vector of the form $e(a_1,a_2)$. Thus $M_1(\underline{\mu})$ is simple $\mathcal{A}$-module by the action of $E,Y$ and $K$.
\end{proof}
\begin{prop}\label{pri}
For $\underline{\mu},\underline{\gamma}\in(\mathbb{K}^*)^4$, the simple $\mathcal{A}$-modules $M_1(\underline{\mu})$ and $M_1(\underline{\gamma})$ are isomorphic if and only if  $\underline{\mu}$ and $\underline{\gamma}$ are related by 
\begin{equation}\label{qiso}
\mu_1=\gamma_1q^{r_1+r_2},\ \mu_2=\gamma_2q^{-r_1+r_2},\ \
\mu^l_3=\gamma^l_3,\ \  \mu^l_4=\gamma^l_4,
\end{equation} for some  $r_1,r_2$ with {$0\leq r_1\leq l_1-1,~0\leq r_2\leq l-1$.}
\end{prop}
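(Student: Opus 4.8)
The plan is to treat the two directions separately, using the normal elements $X,\phi$ and the central elements of Lemma \ref{cc1} as isomorphism invariants for necessity, and an explicit change-of-basis map for sufficiency.

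For the forward (necessity) direction, suppose $\Theta\colon M_1(\underline\mu)\to M_1(\underline\gamma)$ is an $\mathcal{A}$-module isomorphism. Since $E^l$ and $K^{\pm l}$ are central (Lemma \ref{cc1}) and both modules are simple, these elements act as scalars; a direct computation from the module rules shows $E^l$ acts as $\mu_3^l$ (resp.\ $\gamma_3^l$) and $K^l$ as $\mu_4^l$ (resp.\ $\gamma_4^l$), so $\Theta$ forces $\mu_3^l=\gamma_3^l$ and $\mu_4^l=\gamma_4^l$. For the remaining two relations I would use that $X$ and the normal element $\phi$ commute (by (\ref{r4})) and act diagonally on the given basis, with simultaneous eigenvalue $(\mu_1 q^{a_1+a_2},\, \mu_2 q^{-a_1+a_2})$ on $e(a_1,a_2)$ (the $\phi$-eigenvalue is recorded in the proof of Theorem \ref{f1}). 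An isomorphism preserves the joint spectrum of the commuting pair $(X,\phi)$, so the pair $(\mu_1,\mu_2)$ occurring at $e(0,0)$ must coincide with some joint eigenvalue $(\gamma_1 q^{r_1+r_2},\,\gamma_2 q^{-r_1+r_2})$ of $M_1(\underline\gamma)$; choosing the representative with $0\le r_1\le l_1-1,\ 0\le r_2\le l-1$ yields exactly $\mu_1=\gamma_1 q^{r_1+r_2}$ and $\mu_2=\gamma_2 q^{-r_1+r_2}$ (which moreover imply $\mu_1^l=\gamma_1^l,\ \mu_2^l=\gamma_2^l$).

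For the converse (sufficiency) I would write $e'(\cdot)$ for the basis of $M_1(\underline\gamma)$ and seek an isomorphism of the form
\[
\Theta\big(e(a_1,a_2)\big)=c_{a_1,a_2}\, e'(a_1\oplus r_1,\ a_2+r_2),
\]
where the index shift uses the same wraparound conventions as the $E$- and $K$-actions, and $c_{a_1,a_2}\in\mathbb{K}^*$ are scalars to be determined. Intertwining $X$ is immediate from $\mu_1=\gamma_1q^{r_1+r_2}$, independently of the $c$'s. Intertwining $K$ forces the recursion $c_{a_1,a_2+1}=(\gamma_4/\mu_4)\,c_{a_1,a_2}$, and intertwining $E$ forces $c_{a_1+1,a_2}=(\gamma_3/\mu_3)\,q^{2r_2}c_{a_1,a_2}$ (in the odd case); these two recursions commute, so they define $c_{a_1,a_2}$ uniquely from $c_{0,0}$. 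I would then check that the periodicity needed to make $c$ well defined on $\mathbb{Z}/l_1\times\mathbb{Z}/l$ is precisely $(\gamma_4/\mu_4)^l=1$ and $(\gamma_3/\mu_3)^{l_1}q^{2r_2 l_1}=1$, both guaranteed by $\mu_3^l=\gamma_3^l$ and $\mu_4^l=\gamma_4^l$. Finally, substituting $\mu_1=\gamma_1q^{r_1+r_2}$ and $\mu_2=\gamma_2q^{-r_1+r_2}$ into the $Y$-action, one verifies that the identity $q\mu_2-q^{2a_1+1}\mu_1=q^{r_2-r_1}\big(q\gamma_2-q^{2(a_1+r_1)+1}\gamma_1\big)$ makes the $Y$-intertwining hold automatically for the $c$'s already fixed; since $\Theta$ sends a basis bijectively to a basis, it is then an isomorphism.

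The main obstacle will be the even case, where the $E$-action carries an extra shift of the second index by $\tfrac{l}{2}$ at the wraparound $a_1=\tfrac{l}{2}-1\mapsto 0$ (and the $Y$-action has the matching exceptional formula at $a_1=0$). There the two recursions no longer decouple cleanly: traversing the $a_1$-cycle via $E$ returns to $a_1=0$ but advances $a_2$ by $\tfrac{l}{2}$, so global consistency of $c$ becomes a compatibility condition coupling the $E$- and $K$-recursions. I expect this to still reduce to $\mu_3^l=\gamma_3^l$ together with $\mu_4^l=\gamma_4^l$, but verifying that the accumulated scalar around the combined cycle is trivial—and that the exceptional $Y$-formula at $a_1=0$ is compatible with the same $c$'s—is the delicate bookkeeping step that must be carried out explicitly.
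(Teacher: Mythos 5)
Your proposal is correct and follows essentially the same route as the paper: necessity via the joint $(X,\phi)$-spectrum together with the central elements $E^{l},K^{\pm l}$ acting as scalars, and sufficiency via exactly the paper's map $\Phi\left(e(a_1,a_2)\right)=\left(\mu_3^{-1}\gamma_3q^{2r_2}\right)^{a_1}\left(\mu_4^{-1}\gamma_4\right)^{a_2}e\left(a_1\dotplus r_1,\,\cdot\,\right)$, with your ``coupled wraparound'' in the even case being precisely the paper's two-case formula (extra $\tfrac{l}{2}$ shift in the second index when $a_1+r_1$ wraps modulo $\tfrac{l}{2}$). The even-case bookkeeping you defer does go through exactly as you predict --- the accumulated factor around the combined length-$l$ cycle is $\gamma_3^{l}/\mu_3^{l}=1$ and the exceptional $Y$-formula at $a_1=0$ is covered by your identity $q\mu_2-q^{2a_1+1}\mu_1=q^{r_2-r_1}\left(q\gamma_2-q^{2(a_1+r_1)+1}\gamma_1\right)$ specialized at $a_1=0$ --- and the paper likewise leaves this verification to the reader.
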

\begin{proof}
Let $\Psi:M_1(\underline{\mu})\rightarrow M_1(\underline{\gamma})$ be an $\mathcal{A}$-module isomorphism. As in the space $M_1(\underline{\mu})$
\[e(a_1,a_2)=\mu_3^{-a_1}\mu_4^{-a_2}e(0,0)E^{a_1}K^{a_2},\]
then $\Psi$ can be uniquely determined by the only image of $e(0,0)$, i.e., say
\begin{equation}\label{sum}
\Psi\left(e(0,0)\right)= \sum_{\text{finite}}\lambda_t~e\left(b^{(t)}_1,b^{(t)}_2\right)   
\end{equation}
 for $\lambda_t \in \mathbb{K}^*$. Suppose there exist two non-zero coefficients, say, $\lambda_u,\lambda_v$. Now equating the coefficient of basis vectors on both sides of the equalities \begin{equation}\label{rec}
 \Psi\left(e(0,0)X\right)=\Psi\left(e(0,0)\right)X\ \  \text{and}\ \  \Psi\left(e(0,0)\phi\right)=\Psi\left(e(0,0)\right)\phi
\end{equation} we must have, respectively,
\[\mu_{1}=\gamma_{1}q^{b^{(u)}_1+b^{(u)}_2}=\gamma_{1}q^{b^{(v)}_1+b^{(v)}_2}
\ \ \text{and}\ \ \mu_{2}=\gamma_{2}q^{-b^{(u)}_1+b^{(u)}_2}=\gamma_{2}q^{-b^{(v)}_1+b^{(v)}_2}.\] 
This implies 
\[b^{(u)}_1+b_2^{(u)}\equiv b^{(v)}_1+b_2^{(v)}(\text{mod}~l)\ \ \text{and}\ \  -b^{(u)}_1+b_2^{(u)}\equiv -b^{(v)}_1+b_2^{(v)}(\text{mod}~l)\]
and finally we get $b^{(u)}_1\equiv b_1^{(v)}~(\text{mod}~l_1)$ and $b^{(u)}_2\equiv b_2^{(v)}~(\text{mod}~l)$, which is a contradiction. Hence it follows that the image in  (\ref{sum}) is of the form \[\Psi\left(e(0,0)\right)= \lambda~e\left(r_1,r_2\right),\] for some $\lambda \in \mathbb{K}^*$ and for some $r_1,r_2$ with {$0\leq r_1\leq l_1-1,~0\leq r_2\leq l-1$.} Also using this form of $\Psi$ along with the equalities $(\ref{rec})$ and 
\[\Psi\left(e(0,0)E^l\right)=\Psi\left(e(0,0)\right)E^l,\ \ \Psi\left(e(0,0)K^l\right)=\Psi\left(e(0,0)\right)K^l,\]
we can obtain the required relation $(\ref{qiso})$ between $\underline{\mu}$ and $\underline{\gamma}$.
\par For the converse part define a $\mathbb{K}$-linear map $\Phi:M_1(\underline{\mu})\rightarrow M_1(\underline{\gamma})$ by setting, when {$l$ is odd:}
\[\Phi \left(e(a_1,a_2)\right):=\left(\mu_3^{-1}\gamma_3q^{2r_2}\right)^{a_1}\left(\mu_4^{-1}\gamma_4\right)^{a_2}e\left(a_1+r_1,a_2+r_2\right);\]
when {$l$ is even:} 
\[\begin{array}{l}
\Phi\left(e(a_1,a_2)\right):=\\
\begin{cases}
 \left(\mu_3^{-1}\gamma_3q^{2r_2}\right)^{a_1}\left(\mu_4^{-1}\gamma_4\right)^{a_2}e\left(a_1\dotplus r_1,a_2+r_2\right),&\ 0\leq a_1\leq \frac{l}{2}-r_1-1\\
 \left(\mu_3^{-1}\gamma_3q^{2r_2}\right)^{a_1}\left(\mu_4^{-1}\gamma_4\right)^{a_2}e\left(a_1\dotplus r_1,\frac{l}{2}+a_2+r_2\right),&\ \frac{l}{2}-r_1-1<a_1\leq \frac{l}{2}-1
\end{cases}
\end{array}\]
where $+$ and $\dotplus$ are addition modulo $l$ and $\frac{l}{2}$ respectively. It is easy to verify using the relations $(\ref{qiso})$ that $\Phi$ is an $\mathcal{A}$-module isomorphism.
\end{proof}

\subsection{Simple modules of type $M_2(\underline{\mu})$} For $\underline{\mu}:=(\mu_1,\mu_2,\mu_3)\in \mathbb{({K}^*)}^3$, let us consider the $\mathbb{K}$-vector space $M_2(\underline{\mu})$ with basis
\[\{e(a_1,a_2)~|~0\leq a_1 \leq l_1-1,~0\leq a_2 \leq l-1\}.\] 
Define an action of ${\mathcal{A}}$ on the $\mathbb{K}$-space $M_2(\underline{\mu})$ as follows:
\[ e(a_1,a_2)X=\mu_1q^{-a_1+a_2}e(a_1,a_2),\ \
 e(a_1,a_2)K^{\pm 1}= \mu_3^{\pm 1}e(a_1,a_2+(\pm 1)).\]
When $l$ is odd, the action of $E$ and $Y$ is given as follows
 \[\begin{array}{l}
 e(a_1,a_2)E=\begin{cases}
 0,&a_1=0\\
-\mu_2^{-1}\mu_1q^{a_1+2a_2}\displaystyle\frac{q^{-2a_1}-1}{q^{-2}-1}e(a_1+(-1),a_2),&a_1\neq 0
\end{cases}\\
 e(a_1,a_2)Y=\mu_2q^{-a_2}e(a_1+1,a_2).\end{array}\]
When $l$ is even, the action of $E$ and $Y$ is defined as follows
\[\begin{array}{l}
 e(a_1,a_2)E=\begin{cases}
  0,& a_1=0\\
  -\mu_2^{-1}\mu_1q^{a_1+2a_2}\displaystyle\frac{q^{-2a_1}-1}{q^{-2}-1}e(a_1\dotplus (-1),a_2),& a_1\neq 0
 \end{cases} \\
e(a_1,a_2)Y=\begin{cases}
\mu_2q^{-a_2}e(a_1\dotplus 1,a_2),& a_1\neq \frac{l}{2}-1\\
    \mu_2q^{-a_2}e(0,\frac{l}{2}+a_2),& a_1=\frac{l}{2}-1.\\
\end{cases}
 \end{array}\]
where $+$ and $\dotplus$ are addition modulo $l$ and $\frac{l}{2}$ respectively. As in the $M_1(\underline{\mu})$, it can be easily checked that the $\mathbb{K}$-endomorphisms of $M_2(\underline{\mu})$ defined by the above rules satisfy the relations $(\ref{e1})$.
 \begin{theo}\label{f2}
The module $M_2(\underline{\mu})$ is a simple ${\mathcal{A}}$-module of dimension $l_1l$.
\end{theo}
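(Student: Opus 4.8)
The plan is to follow the template set by the proof of Theorem~\ref{f1} essentially verbatim, the only new ingredient being the computation of how the normal element $\phi=EY-qYE$ acts on the present basis. The dimension claim is immediate, since $\{e(a_1,a_2)\mid 0\le a_1\le l_1-1,\ 0\le a_2\le l-1\}$ is a basis of cardinality $l_1l$; thus the real content is simplicity, and as before it is enough to prove that every non-zero submodule $P$ of $M_2(\underline{\mu})$ contains a basis vector $e(a_1,a_2)$.

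First I would record the two diagonal actions that drive the separation argument. The element $X$ already acts diagonally, with eigenvalue $\mu_1q^{-a_1+a_2}$. For $\phi$ I would use the identity $\phi=q^2X+(1-q^2)EY$ from $(\ref{r3})$ together with the defining formulas for $E$ and $Y$; treating $a_1=0$ and $a_1\neq0$ separately and simplifying $(1-q^2)\dfrac{q^{-2a_1}-1}{q^{-2}-1}=q^2(q^{-2a_1}-1)$, a short calculation shows that $\phi$ is also diagonal on the basis, with eigenvalue $\mu_1q^{a_1+a_2+2}$. Note that the $X$-eigenvalue depends only on $-a_1+a_2\ (\text{mod}~l)$ while the $\phi$-eigenvalue depends only on $a_1+a_2\ (\text{mod}~l)$, exactly the complementary pair of linear forms exploited in Theorem~\ref{f1}.

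The separation step is then a transcription of the earlier one. Writing $p\in P$ as a finite linear combination of basis vectors and supposing that two distinct basis vectors $e(a_1^{(u)},a_2^{(u)})$ and $e(a_1^{(v)},a_2^{(v)})$ occur with non-zero coefficients, I would argue as follows. If their $X$-eigenvalues differ, i.e. $-a_1^{(u)}+a_2^{(u)}\not\equiv -a_1^{(v)}+a_2^{(v)}\ (\text{mod}~l)$, then $pX-\Lambda_up$ is a non-zero element of $P$ of strictly smaller length. Otherwise the $X$-eigenvalues coincide and I compare $\phi$-eigenvalues, which differ unless $a_1^{(u)}+a_2^{(u)}\equiv a_1^{(v)}+a_2^{(v)}\ (\text{mod}~l)$. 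But both congruences together force $2a_1^{(u)}\equiv 2a_1^{(v)}$ and $2a_2^{(u)}\equiv 2a_2^{(v)}\ (\text{mod}~l)$, and this forces $e(a_1^{(u)},a_2^{(u)})=e(a_1^{(v)},a_2^{(v)})$: for $l$ odd because $2$ is invertible modulo $l$, and for $l$ even because $0\le a_1\le\frac{l}{2}-1$ already gives $a_1^{(u)}=a_1^{(v)}$, after which the sum congruence pins down $a_2$. Hence whenever $p$ has length at least two, either $pX-\Lambda_up$ or $p\phi-\Lambda'_up$ is a non-zero element of $P$ of shorter length, and induction on length produces a basis vector inside $P$.

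Finally I would show that a single basis vector generates the whole module, and here the work is done by $Y$ and $K^{\pm1}$ rather than by $X$ and $\phi$: every structure constant of $Y$ and of $K^{\pm1}$ is a non-zero scalar, $K^{\pm1}$ cycles $e(a_1,a_2)$ through all $l$ values of $a_2$, and $Y$ cycles $a_1$ through all its values (in the even case via the wrap-around $a_1=\frac{l}{2}-1\mapsto(0,\frac{l}{2}+a_2)$, whose extra shift of $a_2$ is corrected using $K$). Repeated application therefore reaches every basis vector from any one of them, so $P=M_2(\underline{\mu})$ and the module is simple. The only step requiring genuine care is the $\phi$-eigenvalue computation together with the parity bookkeeping in the even case; the remainder is a direct adaptation of the proof of Theorem~\ref{f1}.
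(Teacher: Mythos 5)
Your proposal is correct and takes essentially the same route as the paper: the paper's own proof simply records that $e(a_1,a_2)$ is a simultaneous eigenvector of $X$ and $\phi$ with eigenvalues $\mu_1q^{-a_1+a_2}$ and $\mu_1q^{a_1+a_2+2}$, and then declares the argument parallel to Theorem \ref{f1}. Your write-up supplies exactly the details that reduction relies on — the $\phi$-eigenvalue computation, the separation argument via the two complementary linear forms with the parity discussion, and generation of the whole module from one basis vector by $Y$ and $K^{\pm 1}$ — so it matches the intended proof.
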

\begin{proof}
Note that the vector $e(a_1,a_2)\in M_2(\underline{\mu})$ is an eigenvector of $X$ and $\phi~(=EY-qYE)$ associated with the eigenvalue $\mu_1q^{-a_1+a_2}$ and $\mu_1q^{a_1+a_2+2}$ respectively. With this fact the proof is parallel to the proof of Theorem $\ref{f1}$.  
\end{proof}

\begin{prop}
For $\underline{\mu},\underline{\gamma}\in(\mathbb{K}^*)^3$, the simple $\mathcal{A}$-modules $M_2(\underline{\mu})$ and $M_2(\underline{\gamma})$ are isomorphic if and only if  $\underline{\mu}$ and $\underline{\gamma}$ are related by 
\begin{equation}\label{m2}
\mu_1=\gamma_1q^{-r_1+r_2},\ \ \mu^l_2=\gamma^l_2,\ \  \mu^l_3=\gamma^l_3,
\end{equation} for some  $r_1,r_2$ with {$0\leq r_1\leq l_1-1,~0\leq r_2\leq l-1$.}
\end{prop}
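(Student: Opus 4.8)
The plan is to run the argument in close parallel with the proof of Proposition \ref{pri}, using the observation recorded in the proof of Theorem \ref{f2} that in $M_2(\underline{\mu})$ each basis vector $e(a_1,a_2)$ is a simultaneous eigenvector of the normal elements $X$ and $\phi = EY-qYE$, with eigenvalues $\mu_1 q^{-a_1+a_2}$ and $\mu_1 q^{a_1+a_2+2}$ respectively. For the forward implication, let $\Psi\colon M_2(\underline{\mu})\to M_2(\underline{\gamma})$ be an isomorphism. Since $Y$ and $K$ raise the indices $a_1$ and $a_2$ by invertible scalars, $e(a_1,a_2)$ is a nonzero scalar multiple of $e(0,0)Y^{a_1}K^{a_2}$, so $\Psi$ is completely determined by the single image $\Psi(e(0,0))$.

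Writing $\Psi(e(0,0))=\sum_t\lambda_t e(b_1^{(t)},b_2^{(t)})$, I would use that $\Psi$ commutes with the actions of $X$ and $\phi$, so $\Psi(e(0,0))$ is a simultaneous eigenvector of $X$ and $\phi$ with the eigenvalues $\mu_1$ and $\mu_1 q^2$ of $e(0,0)$. Two basis vectors of $M_2(\underline{\gamma})$ sharing both eigenvalues must coincide, since the congruences ``$-b_1+b_2$ constant'' and ``$b_1+b_2$ constant'' modulo $l$ force $b_1,b_2$ to agree by the same index-range computation as in Theorem \ref{f1} (valid for $l$ both odd and even). Hence $\Psi(e(0,0))=\lambda e(r_1,r_2)$, and comparing $X$-eigenvalues gives $\mu_1=\gamma_1 q^{-r_1+r_2}$. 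For the remaining relations I would push the central elements $Y^l,K^l$ (central by Lemma \ref{cc1}) through $\Psi$: a short computation shows they act on all of $M_2(\underline{\mu})$ as the scalars $\mu_2^l,\mu_3^l$ (the factors $q^{-a_2}$ in the $Y$-action cancel because $q^l=1$) and on $M_2(\underline{\gamma})$ as $\gamma_2^l,\gamma_3^l$; as a central element acts by the same scalar on isomorphic simple modules, this yields $\mu_2^l=\gamma_2^l$ and $\mu_3^l=\gamma_3^l$, completing \eqref{m2}.

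For the converse I would build an explicit isomorphism $\Phi$ from the relations \eqref{m2}. Since $\mu_1=\gamma_1 q^{-r_1+r_2}$ only requires $\mu_1/\gamma_1\in\langle q\rangle$, I set $s\equiv r_2-r_1\pmod l$ so that $\mu_1=\gamma_1 q^{s}$; the map will translate only the $a_2$-index by $s$. (In contrast to the $M_1$-family, here $E$ has the nonzero kernel $\{a_1=0\}$, which an isomorphism must preserve, so the $a_1$-index cannot be shifted --- this is the structural reason only $a_2$ moves.) For $l$ odd I would take $\Phi(e(a_1,a_2))=(\mu_2^{-1}\gamma_2 q^{-s})^{a_1}(\mu_3^{-1}\gamma_3)^{a_2}e(a_1,a_2+s)$, the two scalar bases being forced by intertwining $Y$ and $K$; intertwining of $X$ is then immediate from $\mu_1=\gamma_1 q^{s}$, and intertwining of $E$ (including the trivial matching $0=0$ at $a_1=0$) follows from the same identity. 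For $l$ even I would use the piecewise definition together with the $\dotplus$ and $\tfrac{l}{2}$-shift bookkeeping exactly as in the even case of Proposition \ref{pri}.

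The step I expect to be the main obstacle is the consistency of $\Phi$ across the cyclic wraparound of the $Y$- and $K$-actions: matching $\Phi$ at $a_2=l-1\mapsto 0$ and, for the $Y$-action, at $a_1=l_1-1\mapsto 0$ reduces precisely to $(\mu_3^{-1}\gamma_3)^l=1$ and $(\mu_2^{-1}\gamma_2 q^{-s})^l=1$, that is to $\mu_3^l=\gamma_3^l$ and $\mu_2^l=\gamma_2^l$, so the hypotheses \eqref{m2} are exactly what the verification needs. In the even case the $a_1$-wraparound is additionally coupled to a shift of $a_2$ by $\tfrac{l}{2}$, and keeping the accompanying scalar factors correct there is the most delicate bookkeeping, though it is structurally identical to the corresponding step for $M_1(\underline{\mu})$.
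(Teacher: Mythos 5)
Your forward direction is correct and coincides with the paper's: $\Psi$ is determined by $\Psi(e(0,0))$, the joint $(X,\phi)$-eigenvalue separation forces $\Psi(e(0,0))=\lambda e(r_1,r_2)$, and pushing $X$ and the central elements $Y^l,K^l$ through $\Psi$ yields $(\ref{m2})$. Your refinement that an isomorphism must preserve $\ker E=\operatorname{span}\{e(0,b_2)\}$, so that only the $a_2$-index can be shifted, is also correct, and for $l$ odd your pure $a_2$-shift $\Phi(e(a_1,a_2))=(\mu_2^{-1}\gamma_2q^{-s})^{a_1}(\mu_3^{-1}\gamma_3)^{a_2}e(a_1,a_2+s)$, $s\equiv r_2-r_1\ (\text{mod}\ l)$, does check out as an isomorphism, the two wraparounds requiring exactly $\mu_2^l=\gamma_2^l$ and $\mu_3^l=\gamma_3^l$. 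This is actually sharper than the paper's own converse, whose map translates $a_1$ by $r_1$ and therefore fails to intertwine $E$ whenever $r_1\neq 0$: it sends $e(0,a_2)E=0$ to $0$, while $\Phi(e(0,a_2))E$ is a nonzero multiple of $e(r_1-1,a_2+r_2)$.

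The genuine gap is the even case, which you dismiss as bookkeeping ``structurally identical'' to Proposition \ref{pri}. It is not: by your own kernel argument the $a_1$-shifted piecewise map used there is unavailable here, so you are committed to the pure $a_2$-shift; but at the $Y$-wraparound $a_1=\frac{l}{2}-1$, where the module structure also jumps $a_2$ by $\frac{l}{2}$, the intertwining condition becomes $(\mu_2^{-1}\gamma_2q^{-s})^{l/2}=(\mu_3^{-1}\gamma_3)^{l/2}$, a sign condition that does \emph{not} follow from $(\ref{m2})$ (under $(\ref{m2})$ each side is merely $\pm 1$). The obstruction is real, not an artifact of your map: since any homomorphism is forced to be a scalar multiple of the pure shift, the statement itself fails for even $l$. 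Concretely, take $\underline{\gamma}=(\mu_1,q\mu_2,\mu_3)$, which satisfies $(\ref{m2})$ with $r_1=r_2=0$. Any isomorphism $\Psi$ must have $\Psi(e(0,0))=\lambda e(0,0)$ with $\lambda\neq 0$; but $e(0,0)Y^{l/2}K^{-l/2}=\mu_2^{l/2}\mu_3^{-l/2}\,e(0,0)$ in $M_2(\underline{\mu})$, whereas in $M_2(\underline{\gamma})$ one gets $e(0,0)Y^{l/2}K^{-l/2}=(q\mu_2)^{l/2}\mu_3^{-l/2}\,e(0,0)=-\mu_2^{l/2}\mu_3^{-l/2}\,e(0,0)$ since $q^{l/2}=-1$, forcing $\lambda=-\lambda$. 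So no amount of bookkeeping can close your even case; the correct criterion for $l$ even needs the additional condition $(\mu_2^{-1}\gamma_2)^{l/2}(-1)^{r_2-r_1}=(\mu_3^{-1}\gamma_3)^{l/2}$. (To be fair, the paper's own even-$l$ construction has the same defect, compounded by the $r_1$-shift problem noted above, so this is a flaw in the source as much as in your proposal.)
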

\begin{proof}
As in the space $M_2(\underline{\mu})$
\[e(a_1,a_2)=\mu_2^{-a_1}\mu_3^{-a_2}e(0,0)Y^{a_1}K^{a_2},\]
then $\mathcal{A}$-module isomorphism $\Psi:M_2(\underline{\mu})\rightarrow M_2(\underline{\gamma})$ can be uniquely determined by the only image of $e(0,0)$. Now repeating the argument of Proposition $\ref{pri}$ we have \[\Psi\left(e(0,0)\right)= \lambda~e\left(r_1,r_2\right),\] for some $\lambda \in \mathbb{K}^*$ and for some $r_1,r_2$ with {$0\leq r_1\leq l_1-1,~0\leq r_2\leq l-1$.} Hence the required relations $(\ref{m2})$ follows from the equalities  \[\Psi\left(e(0,0)X\right)=\Psi\left(e(0,0)\right)X,\ \ \Psi\left(e(0,0)Y^l\right)=\Psi\left(e(0,0)\right)Y^l\]
\[\text{and}\ \ \Psi\left(e(0,0)K^l\right)=\Psi\left(e(0,0)\right)K^l.\]
\par For the converse part define a $\mathbb{K}$-linear map $\Phi:M_2(\underline{\mu})\rightarrow M_2(\underline{\gamma})$ by setting, when {$l$ is odd:}
\[\Phi \left(e(a_1,a_2)\right):=\left(\mu_2^{-1}\gamma_2q^{-r_2}\right)^{a_1}\left(\mu_3^{-1}\gamma_3\right)^{a_2}e\left(a_1+r_1,a_2+r_2\right);\]
when {$l$ is even:} 
\[\begin{array}{l}
\Phi\left(e(a_1,a_2)\right):=\\
\small{\begin{cases}
 \left(\mu_2^{-1}\gamma_2q^{-r_2}\right)^{a_1}\left(\mu_3^{-1}\gamma_3\right)^{a_2}e\left(a_1\dotplus r_1,a_2+r_2\right),&0\leq a_1\leq\frac{l}{2}-r_1-1\\
q^{-\frac{l}{2}(a_1+r_1-\frac{l}{2})}\left(\mu_2^{-1}\gamma_2q^{-r_2}\right)^{a_1}\left(\mu_3^{-1}\gamma_3\right)^{a_2}e\left(a_1\dotplus r_1,\frac{l}{2}+a_2+r_2\right),&\frac{l}{2}-r_1-1<a_1\leq\frac{l}{2}-1
\end{cases}}
\end{array}\] where $+$ and $\dotplus$ are addition modulo $l$ and $\frac{l}{2}$ respectively. It is easy to verify using the relations in $(\ref{m2})$ that $\Phi$ is an $\mathcal{A}$-module isomorphism.
\end{proof}

\subsection{Simple modules of type $M_3(\underline{\mu})$} For $\underline{\mu}:=(\mu_1,\mu_2)\in \mathbb{({K}^*)}^2$, let us consider the $\mathbb{K}$-vector space $M_3(\underline{\mu})$ with basis consisting of 
\[\{e(a_1,a_2)~|~0\leq a_1 \leq l_1-1,~0\leq a_2 \leq l-1\}.\]
Define an action of ${\mathcal{A}}$ on the $\mathbb{K}$-space $M_3(\underline{\mu})$ by 
\[\begin{array}{l}
e(a_1,a_2)X=\mu_1q^{-a_1+a_2}e(a_1,a_2),\\
e(a_1,a_2)K^{\pm 1}= \mu_2^{\pm 1}e(a_1,a_2+(\pm 1)),\\
e(a_1,a_2)E=\begin{cases}
-q^{a_1+2a_2}\mu_1\displaystyle\frac{q^{-2a_1}-1}{q^{-2}-1}e(a_1\oplus (-1),a_2),\hspace{.6cm} a_1\neq 0\\
 0,\hspace{.6cm} a_1=0
\end{cases}\\
e(a_1,a_2)Y=\begin{cases}
q^{-a_2}e(a_1\oplus 1,a_2), \hspace{.6cm} a_1\neq l_1-1\\
  0, \hspace{.6cm} a_1= l_1-1
\end{cases}
\end{array}\]
where $+$ and $\oplus$ are addition in the additive group $\mathbb{Z}/l\mathbb{Z}$ and $\mathbb{Z}/{l_1\mathbb{Z}}$ respectively. As in the $M_1(\underline{\mu})$, it can be easily verified that the above action satisfy the defining relations $(\ref{e1})$ of $\mathcal{A}$.
 \begin{theo}\label{f3}
The module $M_3(\underline{\mu})$ is a simple ${\mathcal{A}}$-module of dimension $l_1l$.
\end{theo}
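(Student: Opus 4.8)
The plan is to mimic the strategy already used in the proof of Theorem \ref{f1}, namely to show that every non-zero submodule $P$ of $M_3(\underline{\mu})$ must contain a basis vector $e(a_1,a_2)$, after which the transitivity of the action of $E,Y,K$ on the basis forces $P=M_3(\underline{\mu})$. First I would record the two diagonal operators available for separating basis vectors. A direct computation from the defining action shows that each $e(a_1,a_2)$ is an eigenvector of $X$ with eigenvalue $\mu_1 q^{-a_1+a_2}$, and, using the identity $\phi=q^2X+(1-q^2)EY$ from \eqref{r3} together with the given formulas for $E$ and $Y$, that $e(a_1,a_2)$ is also an eigenvector of the normal element $\phi=EY-qYE$; I would compute this second eigenvalue explicitly (it will be of the shape $\mu_1 q^{a_1+a_2+c}$ for some fixed constant $c$, analogous to the $M_2$ case). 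The key point is that the pair of exponents $(-a_1+a_2,\ a_1+a_2)$ determines $(a_1,a_2)$ modulo the ranges $0\le a_1\le l_1-1$, $0\le a_2\le l-1$.

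Next I would run the length-reduction argument. Take $p\in P$ non-zero, written as a finite $\mathbb{K}$-linear combination of basis vectors, and suppose two coefficients $\lambda_u,\lambda_v$ are non-zero on distinct basis vectors $e(a_1^{(u)},a_2^{(u)})$ and $e(a_1^{(v)},a_2^{(v)})$. If the two vectors have distinct $X$-eigenvalues, then $pX-\Lambda_u p$ lies in $P$ and has strictly smaller length; if instead their $X$-eigenvalues coincide (so $-a_1^{(u)}+a_2^{(u)}\equiv -a_1^{(v)}+a_2^{(v)}\pmod{l}$), I would use $\phi$ to separate them, showing the $\phi$-eigenvalues must then differ. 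Here is where the distinction from Theorem \ref{f1} matters: combining equality of the $X$-eigenvalue with equality of the $\phi$-eigenvalue yields $2a_1^{(u)}\equiv 2a_1^{(v)}\pmod{l}$ and $2a_2^{(u)}\equiv 2a_2^{(v)}\pmod{l}$, and I must check in both the $l$ odd and $l$ even cases that these congruences, read against the ranges $a_1\in\{0,\dots,l_1-1\}$ and $a_2\in\{0,\dots,l-1\}$, force $a_1^{(u)}=a_1^{(v)}$ and $a_2^{(u)}=a_2^{(v)}$, contradicting distinctness. Since $l_1=\ord(q^2)$ equals $l$ for $l$ odd and $l/2$ for $l$ even, the $a_1$-index already lives in the correct residue range, so the congruence $2a_1^{(u)}\equiv 2a_1^{(v)}$ pins down $a_1$ on the nose; this is the cleanest point of the argument and essentially identical to the reasoning in Theorem \ref{f1}.

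By this induction on length, $P$ contains some $e(a_1,a_2)$. To finish I would show that repeatedly applying $Y$ (moving $a_1$ up through $0,1,\dots,l_1-1$) and $E$ (moving it back down), together with $K$ and $K^{-1}$ (cycling $a_2$ through all residues mod $l$), reaches every basis vector from any single one; the only subtlety is that $Y$ kills $e(l_1-1,a_2)$ and $E$ kills $e(0,a_2)$, so one must start from a vector with $a_1$ strictly interior, or note that from $e(a_1,a_2)$ with $a_1\neq 0$ one reaches $e(0,a_2)$ via $E$ and then climbs back up via $Y$, and the nonzero scalar coefficients appearing (which are nonzero precisely because $\mu_1\in\mathbb{K}^*$ and the quantum integers $\tfrac{q^{-2a_1}-1}{q^{-2}-1}$ are nonzero for $1\le a_1\le l_1-1$) guarantee no vector is lost. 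The main obstacle I anticipate is purely bookkeeping: confirming that the interval for $a_1$ is $[0,l_1-1]$ rather than $[0,l-1]$, so that the truncation of the $E,Y$ action at the endpoints is consistent and the $\phi$-separation congruence genuinely determines $a_1$ uniquely; once the eigenvalue computations are verified this reduces to the same linear-algebra argument as Theorem \ref{f1}, and I would simply state that the proof runs parallel to it.
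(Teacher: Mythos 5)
Your proposal is correct and matches the paper's approach: the paper's proof of this theorem likewise records that $e(a_1,a_2)$ is a joint eigenvector of $X$ and $\phi$ with eigenvalues $\mu_1q^{-a_1+a_2}$ and $\mu_1q^{a_1+a_2+2}$ (your constant $c$ is $2$), and then invokes the same length-reduction and transitivity argument of Theorem \ref{f1} that you spell out. Your additional care about the range $0\le a_1\le l_1-1$ and about the endpoint annihilations by $E$ and $Y$ is exactly the bookkeeping implicit in the paper's ``analogous to Theorem \ref{f1}'' step.
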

\begin{proof}
Note that the vector $e(a_1,a_2)\in M_3(\underline{\mu})$ is an eigenvector of $X$ and $\phi~(=EY-qYE)$ associated with the eigenvalue $\mu_1q^{-a_1+a_2}$ and $\mu_1q^{a_1+a_2+2}$ respectively. Using this fact the proof is analogous to the proof of Theorem $\ref{f1}$. 
\end{proof}
\begin{prop}
For $\underline{\mu},\underline{\gamma}\in(\mathbb{K}^*)^2$, the simple $\mathcal{A}$-modules $M_3(\underline{\mu})$ and $M_3(\underline{\gamma})$ are isomorphic if and only if  $\underline{\mu}$ and $\underline{\gamma}$ are related by 
\begin{equation}\label{m3}
\mu_1=\gamma_1q^{r},\ \mu^l_2=\gamma^l_2,
\end{equation} for some  $r$ with $0\leq r\leq l-1$.
\end{prop}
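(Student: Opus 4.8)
The plan is to adapt the argument of Proposition~\ref{pri} to the module $M_3(\underline{\mu})$, the essential new feature being that here the shift in the first coordinate is forced to vanish, so that only a single scalar parameter survives. I would begin exactly as in the proof of Theorem~\ref{f3} by recording that each basis vector $e(a_1,a_2)$ is a simultaneous eigenvector of $X$ and of the normal element $\phi=EY-qYE$, with eigenvalues $\mu_1q^{-a_1+a_2}$ and $\mu_1q^{a_1+a_2+2}$ respectively. Since the $Y$- and $K$-actions give $e(a_1,a_2)=\mu_2^{-a_2}e(0,0)Y^{a_1}K^{a_2}$, any $\mathcal{A}$-module isomorphism $\Psi\colon M_3(\underline{\mu})\to M_3(\underline{\gamma})$ is determined by $\Psi(e(0,0))$; and writing $\Psi(e(0,0))=\sum_t\lambda_t e(b_1^{(t)},b_2^{(t)})$ with all $\lambda_t\neq 0$, the image must again be a simultaneous eigenvector of $X$ and $\phi$, now with eigenvalues $\mu_1$ and $\mu_1q^2$.

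The crux of the forward direction is pinning down these indices. Equating the $X$-eigenvalue forces $\gamma_1q^{-b_1^{(t)}+b_2^{(t)}}=\mu_1$, while equating the $\phi$-eigenvalue forces $\gamma_1q^{b_1^{(t)}+b_2^{(t)}}=\mu_1$, for every $t$. Combining these gives $q^{2b_1^{(t)}}=1$, i.e. $2b_1^{(t)}\equiv 0 \pmod l$; since $0\le b_1^{(t)}\le l_1-1$ with $l_1=\ord(q^2)$, this yields $b_1^{(t)}=0$ for both parities of $l$, after which $b_2^{(t)}$ is determined uniquely modulo $l$. Hence all indices coincide and $\Psi(e(0,0))=\lambda\, e(0,r)$ for a single $r$ with $0\le r\le l-1$. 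Comparing the $X$-action now gives $\mu_1=\gamma_1q^{r}$, and applying $K^l$ (which acts as the central scalars $\mu_2^l$ and $\gamma_2^l$ on the respective simple modules) gives $\mu_2^l=\gamma_2^l$, which is precisely~(\ref{m3}).

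For the converse, assuming~(\ref{m3}) I would define the $\mathbb{K}$-linear map $\Phi\colon M_3(\underline{\mu})\to M_3(\underline{\gamma})$ by
\[
\Phi(e(a_1,a_2)):=q^{-ra_1}(\mu_2^{-1}\gamma_2)^{a_2}\,e(a_1,a_2+r),
\]
with $a_2+r$ read modulo $l$. This sends the basis bijectively to the basis up to nonzero scalars, so it is a linear isomorphism, and it remains to match the four defining actions. The $X$-relation collapses to $\mu_1=\gamma_1q^{r}$, while the $K$-relation—including the wrap-around at $a_2=l-1$—collapses to $\mu_2^l=\gamma_2^l$. The substantive checks are the $Y$- and $E$-relations: there the factor $q^{-ra_1}$ is exactly what compensates for the $a_2$-dependence ($q^{-a_2}$ for $Y$, and $q^{a_1+2a_2}$ for $E$) introduced in the target after the shift $a_2\mapsto a_2+r$, and both again reduce to $\mu_1=\gamma_1q^{r}$. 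Because $E$ and $Y$ act nilpotently along the truncated $a_1$-string (vanishing at $a_1=0$ and $a_1=l_1-1$ respectively), no wrap-around obstruction appears in the first coordinate, so—unlike Proposition~\ref{pri}—the converse requires no separate treatment of the even and odd cases.

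I expect the main obstacle to be the forced vanishing $r_1=0$ in the forward direction: this is the one genuinely new phenomenon relative to Proposition~\ref{pri}, and it is what reduces the isomorphism datum to the single scalar $q^r$ and explains why~(\ref{m3}) carries one parameter fewer than~(\ref{qiso}). Everything else is routine coefficient bookkeeping, here rendered uniform in $l$ by the absence of any shift in the first coordinate.
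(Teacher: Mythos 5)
Your proposal is correct, and its overall skeleton matches the paper: the isomorphism is determined by $\Psi(e(0,0))$, the image is reduced to a single basis vector with vanishing first index, the relations $(\ref{m3})$ are read off from the $X$- and $K^l$-actions, and the converse uses exactly the same intertwiner $\Phi(e(a_1,a_2))=q^{-ra_1}(\mu_2^{-1}\gamma_2)^{a_2}e(a_1,a_2+r)$. Where you genuinely diverge is the mechanism forcing the first index to vanish. The paper first runs the Proposition~\ref{pri} argument to get $\Psi(e(0,0))=\lambda e(b_1,b_2)$ and then kills $b_1$ by a nilpotency-plus-injectivity argument: in $M_3(\underline{\gamma})$ one has $e(b_1,b_2)Y^{l_1-b_1}=0$, so injectivity of $\Psi$ forces $e(0,0)Y^{l_1-b_1}=0$ in $M_3(\underline{\mu})$, which happens only when $b_1=0$. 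You instead exploit the special feature of $M_3$ that both the $X$- and $\phi$-eigenvalues of a basis vector involve the single parameter $\gamma_1$: matching the two eigenvalues of $\Psi(e(0,0))$ gives $\gamma_1q^{-b_1+b_2}=\mu_1=\gamma_1q^{b_1+b_2}$, hence $q^{2b_1}=1$, and $0\le b_1\le l_1-1=\ord(q^2)-1$ forces $b_1=0$ uniformly in the parity of $l$. Both arguments are sound; yours is slightly more economical, since it pins down $b_1=0$ simultaneously with the single-basis-vector claim by pure eigenvalue bookkeeping, whereas the paper's argument leans on the structural fact that the $Y$-string in $M_3$ is truncated of exact length $l_1$ (the feature distinguishing $M_3$ from $M_2$) and would still work even if the two eigenvalues did not share a parameter. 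Your converse verification, with $\mu_1=\gamma_1q^r$ entering through the $X$-, $E$-, $Y$-actions and $\mu_2^l=\gamma_2^l$ through the $K$ wrap-around at $a_2=l-1$, is likewise accurate.
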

\begin{proof}
As in the space $M_3(\underline{\mu})$
\[e(a_1,a_2)=\mu_2^{-a_2}e(0,0)Y^{a_1}K^{a_2},\]
then $\mathcal{A}$-module isomorphism $\Psi:M_3(\underline{\mu})\rightarrow M_3(\underline{\gamma})$ can be uniquely determined by the only image of $e(0,0)$. Now repeating the argument of Proposition $\ref{pri}$ we have \[\Psi\left(e(0,0)\right)= \lambda~e\left(b_1,b_2\right),\] for some $\lambda \in \mathbb{K}^*$ and for some $b_1,b_2$ with {$0\leq b_1\leq l_1-1,~0\leq b_2\leq l-1$.} We claim that $b_1=0$. Indeed
\[\Psi\left(e(0,0)Y^{l_1-b_1}\right)=\lambda~e\left(b_1,b_2\right)Y^{l_1-b_1}=0
\implies e(0,0)Y^{l_1-b_1}=0\implies b_1=0.\]
Hence the required relations $(\ref{m3})$ follows from the equalities  \[\Psi\left(e(0,0)X\right)=\Psi\left(e(0,0)\right)X\ \ \text{and}\ \  \Psi\left(e(0,0)K^l\right)=\Psi\left(e(0,0)\right)K^l.\]
\par For the converse part define a $\mathbb{K}$-linear map $\Phi:M_3(\underline{\mu})\rightarrow M_3(\underline{\gamma})$ by setting
\[\Phi \left(e(a_1,a_2)\right):=q^{-ra_1}\left(\mu_2^{-1}\gamma_2\right)^{a_2}e\left(a_1,a_2+r\right).\]
It is easy to verify using the relations in $(\ref{m3})$ that $\Phi$ is an $\mathcal{A}$-module isomorphism.
\end{proof}
\begin{remak}
It is clear from the action of $\mathcal{A}$ that there does not exist any isomorphism between different types of simple $\mathcal{A}$-modules. Indeed 
\begin{itemize}
    \item no non zero element of $M_1(\underline{\mu})$ is annihilated by $E$, but $e(0,a_2)$ in $M_2(\underline{\mu})$ or $M_3(\underline{\mu})$ is annihilated by $E$ and 
    \item no non zero element of $M_2(\underline{\mu})$ annihilated by $Y$, but $e(l_1-1,a_2)$ in $M_3(\underline{\mu})$ is annihilated by $Y$.
\end{itemize} 
\end{remak}
\section{\bf{Classification of simple ${\mathcal{A}}$-modules}}
Let $q$ be a primitive $l$-th root of unity and $N$ be a simple module over ${\mathcal{A}}$. Then by Remark $\ref{re1}$ $N$ is a finite dimensional vector space over $\mathbb{K}$. If $x\in \mathcal{A}$ is a normal element such that the operator $x$ has an eigen vector on $N$ corresponding to the eigen value $0$, then $\ke(x):=\{u\in N~|~ux=0\}$ is a non zero submodule of $N$ and hence $\ke(x)=N$. Thus in such case we call the action of $x$ on $N$ is trivial, otherwise we call $N$ is $x$-torsion free.
\par Now the elements $X$ and $\phi$
commute (see $(\ref{r4})$). Then there is a common eigenvector $v$ of the operators $X$ and $\phi$.
Put
\[vX=\lambda_1v,\ v\phi=\lambda_2v,\ \ \ \lambda_1,\lambda_2\in\mathbb{K}.\] Now depending on the scalars $\lambda_1$ and $\lambda_2$, we will consider the following cases.
\subsection{}\label{ss1} If $\lambda_1=0$ and $\lambda_2\neq 0$, then the action of $X$ on $N$ is trivial as $X$ is normal element. Hence $N$ becomes a simple module over the factor algebra $\mathcal{A}/\langle X\rangle$ which is isomorphic to a skew Laurent polynomial algebra $\mathbb{E}/\langle X \rangle[K^{\pm 1},\tau]$, where \[\mathbb{E}/\langle X \rangle\cong \mathbb{K}\langle E,Y~|~EY=q^{-1}YE\rangle\] and $\tau(E)=q^2E,~\tau(Y)=q^{-1}Y$ (cf. \cite[Eq $9,10$]{bavl}). Here $N$ becomes a simple module over the quantum affine space
\begin{equation}\label{qas}
  \mathbb{K}\langle E,Y,K~|~EY=q^{-1}YE,KE=q^2EK,KY=q^{-1}YK \rangle  
\end{equation} 
of rank $3$ with invertible action of the generator $K$.
\subsection{}\label{ss2} If $\lambda_1\neq 0$ and $\lambda_2=0$, then $N$ becomes a simple module over the factor algebra $\mathcal{A}/\langle \phi\rangle$ which is isomorphic to a skew Laurent polynomial algebra $\mathbb{E}/\langle \phi \rangle[K^{\pm 1},\tau]$, where \[\mathbb{E}/\langle \phi \rangle\cong \mathbb{K}\langle E,Y~|~EY=qYE\rangle\] and $\tau(E)=q^2E,~\tau(Y)=q^{-1}Y$ (cf. \cite[Eq $14,15$]{bavl}). In this case, $N$ becomes a simple module over the quantum affine space \[\mathbb{K}\langle E,Y,K~|~EY=qYE,KE=q^2EK,KY=q^{-1}YK \rangle\]
of rank $3$ with invertible action of the generator $K$. The simple modules over such quantum affine spaces have been classified already in \cite{smsb}.
\subsection{}\label{ss3} If both $\lambda_1$ and $\lambda_2$ are zero, then $N$ becomes a simple module over the the factor algebra $\mathcal{A}/\langle X, \phi\rangle$ which is isomorphic to a skew Laurent polynomial algebra $\mathbb{E}/\langle X, \phi\rangle[K^{\pm 1},\tau]$, where $\tau$ is same as  above and 
\[\mathbb{E}/\langle X, \phi\rangle \cong \mathbb{E}/\langle X\rangle/\langle X, \phi\rangle/\langle X \rangle \cong \mathbb{K}\langle E,Y~|~EY=q^{-1}YE\rangle/\langle EY \rangle\cong \mathbb{K}[E,Y]/\langle EY\rangle.\] Thus $N$ becomes a simple module over the commutative algebra $\mathbb{K}[E,Y]/\langle EY\rangle$ and hence the $\mathbb{K}$-dimension of $N$ is $1$.
\subsection{} In view of paragraphs $(\ref{ss1})-(\ref{ss3})$, henceforth we can assume that $\lambda_1\neq 0,\lambda_2\neq 0$, that is, $N$ is $X$ and $\phi$-torsion free simple $\mathcal{A}$-module. Now depending on $l$, we will consider two cases separately.\\
\subsubsection{\bf{Case A}} ($l$-odd)  Let $q$ be an odd primitive $l$-th root of unity. Since each of the monomials 
\begin{equation}\label{cev1}
   E^{l},\ Y^{l},\  K^{\pm l},\ X,\ \phi 
\end{equation}
commutes (see $(\ref{r4})$ and Lemma $\ref{c1}$), there is a common eigenvector $v$ of the monomials (\ref{cev1}).
Put
\[vE^l=\alpha v,\ vY^l=\beta v,\ vK^{\pm l}=\xi^{\pm 1}v, \ vX=\lambda_1v,\ v\phi=\lambda_2v,\]
for some $\alpha,\beta\in \mathbb{K}$ and $\xi,\lambda_1,\lambda_2\in \mathbb{K}^*$. Note that the central elements  $E^{l},\ Y^{l}$ and $K^{\pm l}$ act as scalars on $N$, by Schur's lemma. Then the following cases arise depending on scalars $\alpha$ and $\beta$:\\
\textbf{Case $1$:} ${\alpha\neq 0}$. Then the set $\{vE^{a_1}K^{a_2}~|~0\leq a_1,a_2\leq l-1\}$ consists non zero vectors of $N$. Let us choose
\[\mu_1:=\lambda_1,\  \mu_2:=\lambda_2,\ \mu_3:=\alpha^{\frac{1}{l}}\ \ \text{and}\ \ \mu_4:=\xi^{\frac{1}{l}}\]
so that $\underline{\mu}=(\mu_1,\mu_2,\mu_3,\mu_4)\in (\mathbb{K}^{*})^4$. Now we define an $\mathbb{K}$-linear map
\[\Phi:M_1(\underline{\mu}) \longrightarrow N\]
by specifying the image of basis vectors of $M_1(\underline{\mu})$ only
\begin{equation*}
\Phi\left(e(a_1,a_2)\right):=\mu_3^{-a_1}\mu_4^{-a_2} v E^{a_1}K^{a_2}.
\end{equation*} 
One can easily verify that $\Phi$ is a non zero ${\mathcal{A}}$-module homomorphism. In this verification the following computations will be very useful:
\[\begin{array}{l}
\left(vE^{a_1}K^{a_2}\right)E=\begin{cases}
 q^{2a_2}vE^{a_1+1}K^{a_2},&a_1\neq l-1\\
 \alpha q^{2a_2}vK^{a_2},&a_1= l-1
\end{cases}\\
\left(vE^{a_1}K^{a_2}\right)Y=\begin{cases}
 q^{-(a_1+a_2)}\left(\displaystyle\frac{q\lambda_2-q^{2a_1+1}\lambda_1}{1-q^2}\right)vE^{a_1-1}K^{a_2},& a_1\neq 0\\
  \alpha^{-1}q^{-a_2}\left(\displaystyle\frac{q(\lambda_2-\lambda_1)}{1-q^2}\right)vE^{l-1}K^{a_2},& a_1=0.
  \end{cases}
\end{array}\]
Thus by Schur's lemma, $\Phi$ is an isomorphism because $M_1(\underline{\mu})$ and $N$ are both simple ${\mathcal{A}}$-modules.\\
\textbf{Case $2$:} $\alpha=0$ and $\beta\neq 0$. Then there exists $0\leq r\leq l-1$ such that $v':=vE^{r}\neq 0$ and $v'E=0$. Therefore the set $\{v'Y^{a_1}K^{a_2}~|~0\leq a_1,a_2\leq l-1\}$ consists non zero vector of $N$. Now under the setting of scalar parameters 
\[\mu_1:=q^r\lambda_1,\  \mu_2:=\beta^{\frac{1}{l}}\ \ \text{and}\ \ \mu_3:=\xi^{\frac{1}{l}}\]
along with the $\mathbb{K}$-linear map $\Phi:M_2(\underline{\mu}) \longrightarrow N$  defined by
\begin{equation*}
\Phi\left(e(a_1,a_2)\right):=\mu_2^{-a_1}\mu_3^{-a_2} v'Y^{a_1}K^{a_2},
\end{equation*} one can easily verify that $\Phi$ is a non zero ${\mathcal{A}}$-module homomorphism between $M_2(\underline{\mu})$ and $N$. Infact the following computation will be useful for verification:
\[
\begin{array}{l}
\left(v'Y^{a_1}K^{a_2}\right)Y=\begin{cases}
 q^{-a_2}v'Y^{a_1+1}K^{a_2},&a_1\neq l-1\\
\beta q^{-a_2}v'K^{a_2},&a_1= l-1
\end{cases}\\
\left(v'Y^{a_1}K^{a_2}\right)E=\begin{cases}
 -\lambda_1q^{a_1+2a_2+r}\left(\displaystyle\frac{q^{-2a_1}-1}{q^{-2}-1}\right)v'Y^{a_1-1}K^{a_2},& a_1\neq 0\\
  0,& a_1=0.
\end{cases}
\end{array}\]
Thus by Schur's lemma $\Phi$ is an isomorphism.\\
\textbf{Case $3$:} ${\alpha=\beta=0}$. First consider the sequence of vectors \[v,vE,vE^2,\cdots,vE^{l-1},vE^l=0.\] Then there exists $0\leq r\leq l-1$ such that $v':=vE^{r}\neq 0$ and $vE^{r+1}=0$. Next consider another sequence of vectors \[v',v'Y,v'Y^2,\cdots,v'Y^{l-1},v'Y^l=0.\] Then there exists $1\leq s\leq l$ such that $v'Y^{s}=0$ and $v'Y^{s-1}\neq 0$. We now claim that $s=l$. Indeed,
$$\begin{array}{cl}
0=v'Y^sE&=v'q^s\left(EY^s-\displaystyle\frac{q^{-2s}-1}{q^{-2}-1}XY^{s-1}\right)\\
&=-q^s\displaystyle\frac{q^{-2s}-1}{q^{-2}-1}v'XY^{s-1}\\
&=-q^s\displaystyle\frac{q^{-2s}-1}{q^{-2}-1}\lambda_1q^{r-1}v'Y^{s-1}.
\end{array}$$
This implies $s$ is the smallest index such that $q^{-2s}-1=0$ and hence $s=l$. 
Thus $v'=vE^{r}(\neq 0)\in N$ is such that  
\[v'E=0,~v'Y^{l}=0,~v'Y^{l-1}\neq 0,~v'K^{\pm l}=\xi^{\pm 1} v',\]
\[~v'X=\lambda_1q^{r}v',~v'\phi=\lambda_1q^{r+2}v'.\]
Clearly the set $\{v'Y^{a_1}K^{a_2}~|~0\leq a_1,a_2\leq l-1\}$ consists non zero vectors of $N$. Then choose $\mu_1:=\lambda_1q^{r}$ and $\mu_2:=\xi^{\frac{1}{l}}$. Notice that $\underline{\mu}=(\mu_1,\mu_2)\in (\mathbb{K}^*)^2$. Now we define an $\mathbb{K}$-linear map
\[\Phi:M_3(\underline{\mu}) \longrightarrow N\]
by specifying the image of basis vectors of $M_3(\underline{\mu})$ only.
\begin{equation*}
\Phi\left(e(a_1,a_2)\right):=\mu_2^{-a_2} v' Y^{a_1}K^{a_2}.
\end{equation*} 
It is easy to verify that $\Phi$ is a non zero ${\mathcal{A}}$-module homomorphism. In this verification the following computations will be very useful:
\[\begin{array}{l}
\left(v'Y^{a_1}K^{a_2}\right)Y=\begin{cases}
 q^{-a_2}v'Y^{a_1+1}K^{a_2},& a_1\neq l-1\\
 0,& a_1=l-1
\end{cases}\\
\left(v'Y^{a_1}K^{a_2}\right)E=\begin{cases}
 -q^{a_1+2a_2+r}\lambda_1\displaystyle\frac{q^{-2a_1}-1}{q^{-2}-1}v'Y^{a_1-1}K^{a_2},& a_1\neq 0\\
 0,& a_1=0.
\end{cases}
\end{array}\]
Thus by Schur's lemma $\Phi$ is an isomorphism because $M_3(\underline{\mu})$ and $N$ are both simple ${\mathcal{A}}$-module.\\
\subsubsection{\bf{Case B}} ($l$-even) Let $q$ be an even primitive $l$-th root of unity. Here we can expect more commuting operators than odd case. By relation $(\ref{r4})$ and Lemma $\ref{c1}$, it follows that each of the monomials 
\begin{equation}\label{cev}
   E^{l},\ Y^{l},\  K^{\pm l},\ X,\ \phi,\ E^{\frac{l}{2}}K^{\frac{l}{2}},\ Y^{\frac{l}{2}}K^{\frac{l}{2}} 
\end{equation}
commutes. Therefore there is a common eigenvector $v$ of monomials (\ref{cev}).
Put
\begin{equation}
    vE^l=\alpha v,\ vY^l=\beta v,\ vK^{\pm l}=\xi^{\pm 1}v, \ vX=\lambda_1v,\ v\phi=\lambda_2v,
\end{equation}
\begin{equation}\label{ab}
    vE^{\frac{l}{2}}K^{\frac{l}{2}}=\alpha'v,~vY^{\frac{l}{2}}K^{\frac{l}{2}}=\beta'v,
\end{equation}
for some $\alpha,\alpha',\beta,\beta'\in \mathbb{K}$ and $\xi,\lambda_1,\lambda_2\in \mathbb{K}^*$. Now the relations in $(\ref{ab})$ along with its existing scalars can be expressed as
\begin{equation}\label{ev1}
vE^{\frac{l}{2}}=\xi^{-1}\alpha'vK^{\frac{l}{2}},~(\alpha')^{2}=\alpha \xi.
\end{equation}
\begin{equation}\label{ev2}
    vY^{\frac{l}{2}}=\xi^{-1}\beta'vK^{\frac{l}{2}},~(\beta')^{2}=q^{-\frac{l^2}{4}}\beta \xi.
\end{equation}
Note that the central elements  $E^{l},\ Y^{l}$ and $K^{\pm l}$ act as scalars on $N$, by Schur's lemma. Then the following cases arise:\\
\textbf{Case $1$:} ${\alpha\neq 0}$. Then by (\ref{ev1}), $\alpha'\neq 0$ and the set \[\{vE^{a_1}K^{a_2}~|~0\leq a_1\leq \frac{l}{2}-1,0\leq a_2\leq l-1\}\] consists non zero vectors of $N$. Let us choose
\[\mu_1:=\lambda_1,\  \mu_2:=\lambda_2,\ \mu_3:=\alpha^{\frac{1}{l}}\ \ \text{and}\ \ \mu_4:=\xi^{\frac{1}{l}}.\]
Notice that $\underline{\mu}=(\mu_1,\mu_2,\mu_3,\mu_4)\in (\mathbb{K}^{*})^4$. Now we define an $\mathbb{K}$-linear map
\[\Phi:M_1(\underline{\mu}) \longrightarrow N\]
by specifying the image of basis vectors of $M_1(\underline{\mu})$ only
\begin{equation*}
\Phi\left(e(a_1,a_2)\right):=\mu_3^{-a_1}\mu_4^{-a_2} v E^{a_1}K^{a_2}.
\end{equation*} 
It is easy to verify that $\Phi$ is a non zero ${\mathcal{A}}$-module homomorphism. In this verification the following computations will be very useful:
\[\begin{array}{l}
\left(vE^{a_1}K^{a_2}\right)E=\begin{cases}
q^{2a_2}vE^{a_1+1}K^{a_2},& a_1\neq \frac{l}{2}-1\\
 \xi^{-1}\alpha'q^{2a_2} vK^{\frac{l}{2}+a_2},& a_1=\frac{l}{2}-1.
\end{cases}\\
\left(vE^{a_1}K^{a_2}\right)Y=\begin{cases}
q^{-(a_1+a_2)}\left(\displaystyle\frac{q\mu_2-q^{2a_1+1}\mu_1}{1-q^2}\right)vE^{a_1-1}K^{a_2},& a_1\neq 0\\
 (\alpha')^{-1}q^{-a_2}\left(\displaystyle\frac{q(\mu_2-\mu_1)}{1-q^2}\right)vE^{\frac{l}{2}-1}K^{\frac{l}{2}+a_2},& a_1= 0.
\end{cases}
\end{array}\]
Thus by Schur's lemma $\Phi$ is an isomorphism because $M_1(\underline{\mu})$ and $N$ are both simple ${\mathcal{A}}$-module.\\
\textbf{Case $2$:} ${\alpha=0}$ and ${\beta\neq 0}$. Then by (\ref{ev1}) and (\ref{ev2}) we get $\alpha'=0$ and $\beta'\neq 0$. So there exists $0\leq r\leq \frac{l}{2}-1$ such that $v':=vE^{r}\neq 0$ and $v'E=0$. Therefore the set \[\{v'Y^{a_1}K^{a_2}~|~~0\leq a_1\leq \frac{l}{2}-1,0\leq a_2\leq l-1\}\] consists non zero vectors of $N$. Now under the setting of scalar parameters 
\[\mu_1:=q^r\lambda_1,\  \mu_2:=\beta^{\frac{1}{l}}\ \ \text{and}\ \ \mu_3:=\xi^{\frac{1}{l}}\]
along with the $\mathbb{K}$-linear map $\Phi:M_2(\underline{\mu}) \longrightarrow N$  define by
\begin{equation*}
\Phi\left(e(a_1,a_2)\right):=\begin{cases}
 \mu_2^{-a_1}\mu_3^{-a_2} v' Y^{a_1}K^{a_2},& a_1\neq 0\\
 q^{-\frac{l(r+1)}{2}}\mu_3^{-a_2}v'K^{a_2},&a_1=0
\end{cases}
\end{equation*} one can easily verify that $\Phi$ is a non zero ${\mathcal{A}}$-module homomorphism between $M_2(\underline{\mu})$ and $N$. To verify this one can use following simplifications:
\[\begin{array}{l}
\left(v'Y^{a_1}K^{a_2}\right)Y=\begin{cases}
 q^{-a_2}v'Y^{a_1+1}K^{a_2},&a_1\neq \frac{l}{2}-1\\
\xi^{-1}\beta' q^{-a_2-\frac{lr}{2}}v'K^{\frac{l}{2}+a_2}, &a_1=\frac{l}{2}-1
\end{cases}\\
\left(v'Y^{a_1}K^{a_2}\right)E=\begin{cases}
 -q^{a_1+2a_2}\left(\displaystyle\frac{q^{-2a_1}-1}{q^{-2}-1}\right)q^{r}\lambda_1v'Y^{a_1-1}K^{a_2},& a_1\neq 0\\
  0,& a_1=0.
\end{cases}
\end{array}
\]
Thus by Schur's lemma $\Phi$ is an isomorphism.\\
\textbf{Case $3$:} ${\alpha=\beta=0}$. Similar argument as in [$l$-odd-Case $3$], there exists $0\leq r\leq \frac{l}{2}-1$ such that  
\[v':=vE^{r}\neq 0,~v'E=0,~v'Y^{\frac{l}{2}}=0,~v'Y^{\frac{l}{2}-1}\neq 0,\] 
\[v'K^{\pm l}=\xi^{\pm 1}v',~v'X=\lambda_1q^{r}v',~v'\phi=\lambda_1q^{r+2} v'.\]
Clearly the set $\{v'Y^{a_1}K^{a_2}~|~0\leq a_1\leq \frac{l}{2}-1,0\leq a_2\leq l-1\}$ consists non zero vectors of $N$. Now under the setting of scalar parameters 
$\mu_1:=\lambda_1q^{r}$ and $\mu_2:=\xi^{\frac{1}{l}}$ along with the $\mathbb{K}$-linear map $\Phi:M_3(\underline{\mu}) \longrightarrow N$  define by
\begin{equation*}
\Phi\left(e(a_1,a_2)\right):=\mu_2^{-a_2} v' Y^{a_1}K^{a_2}.
\end{equation*} one can easily verify that $\Phi$ is a non zero ${\mathcal{A}}$-module homomorphism between $M_3(\underline{\mu})$ and $N$. Thus by Schur's lemma $\Phi$ is an isomorphism.
\par Finally the above discussions lead us to the main result of this section which provides an opportunity for classification of simple $\mathcal{A}$-modules in terms of scalar parameters.
\begin{theom}
Let $q$ be a primitive $l$-th root of unity with $l\geq 3$. Then each $X,\phi$-torsion free simple $\mathcal{A}$-module is isomorphic to one of the following simple $\mathcal{A}$-modules: 
\begin{enumerate}
    \item $M_1(\underline{\mu})$ for some $\underline{\mu}\in (\mathbb{K}^*)^4$;
    \item $M_2(\underline{\mu})$ for some $\underline{\mu}\in (\mathbb{K}^*)^4$;
    \item $M_3(\underline{\mu})$ for some $\underline{\mu}\in (\mathbb{K}^*)^2$.
\end{enumerate}
Moreover the $\mathbb{K}$-dimension of each $X,\phi$-torsion free simple $\mathcal{A}$-module is maximal which is equal to $\pideg (\mathcal{A})$.
\end{theom}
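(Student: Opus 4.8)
The plan is to take an arbitrary $X,\phi$-torsion free simple $\mathcal{A}$-module $N$ and match its eigendata against the explicit actions defining $M_1$, $M_2$ and $M_3$, the three types arising according to whether $E^l$ and $Y^l$ act invertibly or nilpotently on a suitably chosen cyclic generator. First I would use that $X$ and $\phi$ commute (relation (\ref{r4})) to choose a common eigenvector $v$ with $vX=\lambda_1 v$ and $v\phi=\lambda_2 v$; torsion-freeness forces $\lambda_1,\lambda_2\in\mathbb{K}^*$. I would then enlarge the commuting family: by (\ref{r4}) and Lemma \ref{c1} the monomials $E^l,Y^l,K^{\pm l},X,\phi$ pairwise commute, and when $l$ is even so do the additional operators $E^{l/2}K^{l/2}$ and $Y^{l/2}K^{l/2}$. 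Replacing $v$ by a common eigenvector of this enlarged family, and invoking Lemma \ref{cc1} together with Schur's lemma, the central elements $E^l,Y^l,K^{\pm l}$ act as scalars $\alpha,\beta,\xi$ on all of $N$, with $\xi\neq 0$.

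The heart of the argument is a trichotomy on the pair $(\alpha,\beta)$. If $\alpha\neq 0$, then the vectors $vE^{a_1}K^{a_2}$ are all nonzero and the rescaled assignment $e(a_1,a_2)\mapsto \mu_3^{-a_1}\mu_4^{-a_2}vE^{a_1}K^{a_2}$, with $\mu_1=\lambda_1,\mu_2=\lambda_2,\mu_3=\alpha^{1/l},\mu_4=\xi^{1/l}$, defines a nonzero $\mathcal{A}$-module map $M_1(\underline{\mu})\to N$. If $\alpha=0\neq\beta$, I would first slide to the top of the $E$-string, replacing $v$ by $v'=vE^{r}$ with $v'E=0$, and then the analogous assignment built from $v'Y^{a_1}K^{a_2}$ gives a nonzero map $M_2(\underline{\mu})\to N$. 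If $\alpha=\beta=0$, both $E$ and $Y$ act nilpotently on the generator $v'=vE^r$, and the same recipe yields a nonzero map $M_3(\underline{\mu})\to N$. In each case the source is simple (Theorems \ref{f1}, \ref{f2}, \ref{f3}) and $N$ is simple, so any nonzero homomorphism is an isomorphism by Schur's lemma. The routine checks are exactly the homomorphism computations displayed in Case A and Case B, carried out separately for $l$ odd and $l$ even.

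The step I expect to be most delicate is the even case together with the nilpotent case $\alpha=\beta=0$. For $l$ even one must extract from $E^{l/2}K^{l/2}$ and $Y^{l/2}K^{l/2}$ the scalar identities $(\alpha')^2=\alpha\xi$ and $(\beta')^2=q^{-l^2/4}\beta\xi$ (equations (\ref{ev1}), (\ref{ev2})); these are what force the generator onto a cycle of length $l/2$ and so explain the $\dotplus$ modular action in the definitions of $M_1$ and $M_2$. In the nilpotent case one must pin down the exact nilpotency index of $Y$ on $v'$: commuting $E$ past $Y^s$ via Lemma \ref{c1} and using $v'E=0$, the surviving term is proportional to $\lambda_1\,\frac{q^{-2s}-1}{q^{-2}-1}\,v'Y^{s-1}$, so the smallest $s$ with $v'Y^s=0$ must satisfy $q^{-2s}=1$, giving $s=l_1$; this is precisely what guarantees that $\{v'Y^{a_1}K^{a_2}\}$ attains the full size $l_1 l$. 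Finally the dimension claim is immediate: each $M_i$ has dimension $l_1 l$, and since $l_1=l$ for $l$ odd and $l_1=l/2$ for $l$ even, this equals $l^2$, respectively $l^2/2$, which is $\pideg\mathcal{A}$ by Section $3$; hence the general bound $\dime N\leq \pideg\mathcal{A}$ from Proposition \ref{sim} is attained.
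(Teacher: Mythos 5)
Your proposal is correct and follows essentially the same route as the paper: the same commuting families (with the extra monomials $E^{l/2}K^{l/2}$, $Y^{l/2}K^{l/2}$ when $l$ is even), the same trichotomy on $(\alpha,\beta)$, the same rescaled cyclic maps onto $M_1$, $M_2$, $M_3$ concluded via Schur's lemma, and the same key verifications (the identities $(\alpha')^2=\alpha\xi$, $(\beta')^2=q^{-l^2/4}\beta\xi$ in the even case, and the nilpotency index $s=l_1$ of $Y$ on $v'$ via Lemma \ref{c1}). The dimension argument matching $l_1 l$ with $\pideg(\mathcal{A})$ is also exactly the paper's.
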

\section{\bf{Finite Dimensional Indecomposable $\mathcal{A}$-Modules}}
In this section we aim to construct some finite dimensional indecomposable modules over $\mathcal{A}$. Let $q$ be a primitive $l$-th root of unity. Let $\mathcal{B}$ be the subalgebra of $\mathcal{A}$ generated by the $E,X$ and $K^{\pm l}$. Observe that $\mathcal{B}E=E\mathcal{B}$ is an ideal of $\mathcal{B}$ and $\mathcal{B}/\mathcal{B}E\cong\mathbb{K}[X,K^{\pm l}]$ is a commutative algebra. Then for $(\lambda_1,\lambda_2)\in(\mathbb{K}^*)^2$, there is a one dimensional $\mathcal{B}$-module $\mathbb{K}(\lambda_1,\lambda_2)$ given by
\[vE=0,\ vX=\lambda_1v,\ vK^{\pm l}=\lambda_2^{\pm 1}v\]
for $v\in \mathbb{K}=\mathbb{K}(\lambda_1,\lambda_2)$. Define the Verma module $M(\lambda_1,\lambda_2)$ by
\[M(\lambda_1,\lambda_2):=\mathbb{K}(\lambda_1,\lambda_2)\otimes_{\mathcal{B}}\mathcal{A}.\]
Since $\mathcal{A}$ is a free left $\mathcal{B}$-module with basis \[\{Y^mK^n~|~0\leq n\leq l-1, m\geq 0\},\]
by Proposition \ref{imp}, $M(\lambda_1,\lambda_2)$ has a vector space basis \[f(m,n):=v \otimes Y^mK^n,\ \ 0\leq n\leq l-1, m\geq 0.\]
Thus $M(\lambda_1,\lambda_2)$ carries an infinite dimensional right $\mathcal{A}$-module. The explicit action of the generators of $\mathcal{A}$ on $M(\lambda_1,\lambda_2)$ is given by
\[
\begin{array}{l}
f(m,n)K^{\pm 1}=f(m,n\pm 1)\\
f(m,n)X=\lambda_1q^{n-m}f(m,n)\\
f(m,n)Y=q^{-n}f(m+1,n)\\
f(m,n)E=\begin{cases}
 -q^{m+2n}\displaystyle\frac{q^{-2m}-1}{q^{-2}-1}f(m-1,n),& m>0\\
 0,& m=0
\end{cases}
\end{array}\]
If $l$ is odd, then one gets the invariant subspaces
\[M_{p,l}:=\spa\{f(pl+i,n)~|~i\geq 0,0\leq n\leq l-1\},\ \ \ \ p=1,2,\cdots\]
owing to the equation $f(pl,n)E=0$. Evidently one has
\[M(\lambda_1,\lambda_2)\supset M_{1,l}\supset M_{2,l} \supset M_{3,l}\supset\cdots\]
and \[Q_{1,l}\subset Q_{2,l}\subset Q_{3,l}\subset \cdots,\]
where $Q_{p,l}=M(\lambda_1,\lambda_2)/M_{p,l}$ is the quotient space corresponding to $M_{p,l}$. Also observe that
\[Q_{p,l}=\spa\{\overline{f}(m,n)=f(m,n) \mo M_{p,l}~|~0\leq m\leq pl-1,0\leq n\leq l-1\}.\]
On each $Q_{p,l}$, the above action induces $pl^2$-dimensional $\mathcal{A}$-module structure:
\[
\begin{array}{l}
\overline{f}(m,n)K^{\pm 1}=\overline{f}(m,n\pm 1)\\
\overline{f}(m,n)X=\lambda_1q^{n-m}\overline{f}(m,n)\\
\overline{f}(m,n)Y=\begin{cases}
 q^{-n}\overline{f}(m+1,n),& 0\leq m< pl-1\\
 0,&m=pl-1
\end{cases}\\
\overline{f}(m,n)E=\begin{cases}
 -q^{m+2n}\displaystyle\frac{q^{-2m}-1}{q^{-2}-1}\overline{f}(m-1,n),& 0<m\leq pl-1\\
 0,& m=0
\end{cases}
\end{array}\]
Next we will focus on the $\mathcal{A}$-submodules of $Q_{p,l}$. 
\begin{lemma}\label{submod}
The $\mathcal{A}$-submodules of $Q_{p,l}$ are of the form $M_{r,l}/M_{p,l}, 1\leq r\leq p$ or $M(\lambda_1,\lambda_2)/M_{p,l}$.
\end{lemma}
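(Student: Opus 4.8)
The plan is to show that the lattice of $\mathcal{A}$-submodules of $Q_{p,l}$ is exactly the chain $V_0\supset V_1\supset\cdots\supset V_p$, where $V_r:=\spa\{\overline{f}(m,n)\mid rl\le m\le pl-1,\ 0\le n\le l-1\}$; indeed $V_r=M_{r,l}/M_{p,l}$ for $1\le r\le p$ and $V_0=M(\lambda_1,\lambda_2)/M_{p,l}$. The conceptual point, in contrast with the simple modules of Section $5$, is that here the diagonal operators do not separate the basis: $X$ acts on $\overline{f}(m,n)$ by $\lambda_1q^{n-m}$ and a direct check shows $\phi$ acts diagonally too, but both eigenvalues depend only on the residues of $m,n$ modulo $l$, so neither distinguishes $\overline{f}(m,n)$ from $\overline{f}(m+l,n)$. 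Consequently the submodule structure must be read off from the nilpotent raising/lowering action of $Y$ and $E$ rather than from eigenvalue separation, and this is the heart of the matter.

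First I would use that $X$ acts semisimply, so any submodule $P$ splits as $P=\bigoplus_\chi\left(P\cap Q_\chi\right)$, where $Q_\chi$ denotes the $X$-eigenspace for the eigenvalue $\chi=\lambda_1q^{j}$. A short bookkeeping of the stated actions gives $Y\colon Q_{\lambda_1q^{j}}\to Q_{\lambda_1q^{j-1}}$, while $E$ and $K$ both map $Q_{\lambda_1q^{j}}\to Q_{\lambda_1q^{j+1}}$. Hence the operators $R:=YK$ and $L:=EK^{-1}$ preserve each $Q_\chi$, and $Q_\chi$ has basis $g_m:=\overline{f}(m,(m+j)\bmod l)$ for $0\le m\le pl-1$, on which $R$ raises the index $m$ with $g_{pl-1}R=0$, and $L$ lowers it, with $g_mL=0$ precisely when $l\mid m$ (using Lemma \ref{c1} and that $q^{-2m}=1$ iff $l\mid m$ for $l$ odd).

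The key observation is then that $R$ acts on the $pl$-dimensional space $Q_\chi$ as a single nilpotent Jordan block, namely one chain $g_0\to g_1\to\cdots\to g_{pl-1}\to 0$ with nonzero structure constants. By the elementary classification of the invariant subspaces of a single Jordan block, every $R$-stable subspace of $Q_\chi$ is one of the top segments $\ker R^{k}=\spa\{g_{pl-k},\dots,g_{pl-1}\}$. Imposing in addition $L$-stability forces the cut-off index $pl-k$ to be a level at which $L$ vanishes, that is, a multiple of $l$; thus $P\cap Q_\chi=\spa\{g_m\mid m\ge rl\}$ for some $0\le r\le p$. This simultaneously yields the expected answer and sidesteps the awkward task of separating two floor vectors $\overline{f}(rl,n)$ and $\overline{f}(r'l,n)$ that share all diagonal eigenvalues: the finiteness of the chain does the separating, since enough applications of $R$ push the higher floor to the top where $R$ annihilates it while the lower one survives.

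Finally I would verify that the level $r$ is the same in each eigenspace. Since $K\colon Q_\chi\to Q_{q\chi}$ is a bijection sending $g_m$ to a nonzero scalar multiple of $g_m$ (it preserves the index $m$), and it carries the $K$-stable subspace $P\cap Q_\chi$ onto $P\cap Q_{q\chi}$, the cut-off level $r$ cannot depend on $\chi$. Assembling the eigenspaces then gives $P=\spa\{\overline{f}(m,n)\mid m\ge rl\}=V_r$, which equals $M_{r,l}/M_{p,l}$ (or $M(\lambda_1,\lambda_2)/M_{p,l}$ when $r=0$), as claimed. The main obstacle is precisely the reduction to a single $X$-eigenspace together with the single-Jordan-block analysis; once those are in place, the $K$-equivariance step making the level uniform is routine.
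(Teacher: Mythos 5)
Your proof is correct, but it follows a genuinely different route from the paper's. The paper argues element-by-element, upstairs in the Verma module: given a submodule $W\supseteq M_{p,l}$ and a nonzero $w\in W$ with lowest level $m'$, it applies $Y^{pl-1-m'}$ to push that level to the top $m=pl-1$ (everything else falling into $M_{p,l}$), then uses the distinct $X$-eigenvalues $\lambda_1q^{n-pl+1}$ at that top level together with an induction on the number of nonzero coefficients to isolate a single vector $f(pl-1,n')\in W$, and finally walks down with $E$ and around with $K^{\pm 1}$ to capture all of $M_{p-1,l}$, iterating as needed; this recycles exactly the length-reduction technique of Theorem \ref{f1}. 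You instead decompose the submodule $P$ at the outset into $X$-eigenspaces $P=\bigoplus_\chi (P\cap Q_\chi)$ (legitimate, since $X$ acts semisimply), observe that $R=YK$ acts on each $pl$-dimensional eigenspace $Q_\chi$ as a single nilpotent Jordan block so that its invariant subspaces form the chain of top segments, use $L=EK^{-1}$ (which vanishes exactly at levels $m$ with $l\divides m$, by Lemma \ref{c1} and $l$ odd) to force the cutoff to be a multiple of $l$, and use $K$-equivariance around the cycle of eigenvalues to make the cutoff uniform. What your approach buys is a structural description of the full submodule lattice as a chain in one stroke, with a transparent explanation of why cutoffs occur precisely at the levels where $E$ dies; the cost is an appeal to the classification of invariant subspaces of a cyclic nilpotent operator, whereas the paper's argument, though more computational, is entirely self-contained and uniform with its earlier simplicity proofs. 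One small point of hygiene: the map $K$ a priori carries $P\cap Q_\chi$ \emph{into} $P\cap Q_{q\chi}$; surjectivity (or the equality of cutoffs) should be deduced by cycling $l$ times and using that $K^l$ acts as the nonzero scalar $\lambda_2$, which is exactly the inequality chain $r(\chi)\geq r(q\chi)\geq\cdots\geq r(q^l\chi)=r(\chi)$.
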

\begin{proof}
It is enough to show that the $\mathcal{A}$-submodules of $M(\lambda_1,\lambda_2)$ containing $M_{p,l}$ are of the form $M_{r,l}, 1\leq r\leq p$ or $M(\lambda_1,\lambda_2)$. Suppose $W$ be a $\mathcal{A}$-submodule containing $M_{p,l}$. If $W=M_{p,l}$, then we are done. Otherwise one can choose $0\neq w\in W$ such that
\[w=\sum_{n=0}^{l-1}\sum_{m=0}^{pl-1}C_{mn}f(m,n),\ \ C_{mn}\in \mathbb{K},\] with at least one non zero $C_{mn}$. Let $m'=\mi\{m~|~C_{mn}\neq 0\}$. Then $wY^{pl-1-m'}\in W$. Now
\[
\begin{array}{cl}
wY^{pl-1-m'}&=\sum\limits_{n=0}^{l-1}\sum\limits_{m=m'}^{pl-1}C_{mn}q^{-n(pl-1-m')}f(m+pl-1-m',n)\\
&=w'+\sum\limits_{n=0}^{l-1}C_{m'n}q^{-n(pl-1-m')}f(pl-1,n),
\end{array}\]
where $w'=\sum\limits_{n=0}^{l-1}\sum\limits_{m=m'+1}^{pl-1}C_{mn}q^{-n(pl-1-m')}f(m+pl-1-m',n) \in M_{p,l}$. Thus \[w''=wY^{pl-1-m'}-w'=\sum\limits_{n=0}^{l-1}C_{m'n}q^{-n(pl-1-m')}f(pl-1,n)\in W.\] Now if $C_{m'n_1}$ and $C_{m'n_2}$ are two non zero scalars in $w''$, then $w''X-\lambda_1q^{n_1-pl+1}w''$ is a non zero element in $W$ smaller length than $w''$. Hence by
induction it follows that $f(pl-1,n')\in W$ for some $0\leq n'\leq l-1$. Finally with the action of $E$ and $K^{\pm 1}$ on $f(pl-1,n')$, we have $M_{p-1,l}\subseteq W$. Now if $W=M_{p-1,l}$, then we are done. Otherwise continuing with the above argument one can obtain the desired result.
\end{proof}
With this we have the following:
\begin{theom}
Suppose $l$ is odd. Then
\begin{enumerate}
    \item [(1)] $Q_{1,l}$ is a simple $\mathcal{A}$-module.
    \item [(2)] When $p>1$, then $Q_{p,l}$ is neither simple nor semisimple $\mathcal{A}$-module.
    \item [(3)] When $p>1$, then $Q_{p,l}$ is an indecomposable $\mathcal{A}$-module.
\end{enumerate}
\end{theom}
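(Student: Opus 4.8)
The plan is to read off all three assertions directly from the submodule classification of Lemma \ref{submod}, whose essential content is that the lattice of $\mathcal{A}$-submodules of $Q_{p,l}$ is totally ordered. Working throughout in the case $l$ odd, I would first record that Lemma \ref{submod} exhibits every submodule of $Q_{p,l}$ in the nested family
\[
0=M_{p,l}/M_{p,l}\subsetneq M_{p-1,l}/M_{p,l}\subsetneq\cdots\subsetneq M_{1,l}/M_{p,l}\subsetneq M(\lambda_1,\lambda_2)/M_{p,l}=Q_{p,l},
\]
a chain of $p+1$ submodules in which the consecutive inclusions are strict because the $M_{r,l}$ are strictly decreasing.

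With this chain in hand the three parts follow quickly. For (1), specializing to $p=1$ leaves only the submodules $0$ and $Q_{1,l}$, so $Q_{1,l}$ is simple by definition. For $p>1$, the term $M_{p-1,l}/M_{p,l}$ is a proper nonzero submodule, so $Q_{p,l}$ is not simple. To establish indecomposability in (3) I would suppose $Q_{p,l}=U\oplus V$ with $U,V$ nonzero; since the submodules form a chain, $U$ and $V$ are comparable, and the smaller one then equals $U\cap V=0$, a contradiction, so no such decomposition exists. Finally, for the semisimplicity claim in (2) I would invoke the elementary fact that an indecomposable module is semisimple only when it is simple: as $Q_{p,l}$ is indecomposable by (3) yet not simple, it cannot be semisimple.

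Given Lemma \ref{submod}, there is no substantive obstacle remaining at this stage; the argument is purely lattice-theoretic, and the only care required is verifying that the listed quotients really exhaust all submodules and that the inclusions are strict. The genuine difficulty of this theorem has already been absorbed into the proof of Lemma \ref{submod}, where one shows that any $\mathcal{A}$-submodule of $M(\lambda_1,\lambda_2)$ containing $M_{p,l}$ must coincide with some $M_{r,l}$; everything here is a formal consequence of that chain condition.
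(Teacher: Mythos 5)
Your proof is correct, but it takes a genuinely different route from the paper's for parts (2) and (3). The paper only invokes Lemma \ref{submod} for part (1); for part (2) it argues computationally: it takes the specific submodule $W=M_{p-1,l}/M_{p,l}=\spa\{\overline{f}(m,n)\mid (p-1)l\leq m\leq pl-1\}$, assumes an invariant complement $\overline{W}$ exists, picks a nonzero $x\in\overline{W}$ with a nonzero coefficient $C_{m'n}$ at minimal index $m'<(p-1)l$, and shows $xY^{(p-1)l-m'}$ is a nonzero element of $W\cap\overline{W}$, a contradiction; part (3) is then obtained by repeating this complement-killing argument for each $M_{r,l}/M_{p,l}$, $1\leq r\leq p-1$. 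You instead extract from Lemma \ref{submod} the fact that the submodule lattice of $Q_{p,l}$ is a finite chain, prove (3) first by the purely formal observation that two nonzero submodules in a chain are comparable and hence cannot intersect trivially, and then deduce the semisimplicity claim of (2) from the standard fact that a semisimple indecomposable module is simple. Your argument is shorter and makes transparent that all the work lives in Lemma \ref{submod} (indeed it proves the general fact that any uniserial module of length greater than one is indecomposable and not semisimple); the paper's argument is more hands-on and, for part (2), is self-contained in the sense that it needs only the invariance of $W$ and the explicit $Y$-action rather than the full classification of submodules. Both orderings of the logic are sound, and your reading of Lemma \ref{submod} — that $M_{p,l}/M_{p,l}=0$ and $M(\lambda_1,\lambda_2)/M_{p,l}=Q_{p,l}$ appear in the list, with all inclusions strict — is accurate.
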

\begin{proof}
(1) It follows from Lemma \ref{submod}.\\
(2) First of all $Q_{p,l}$ is semisimple if and only if for each invariant subspace $W\subset Q_{p,l}$, there is an invariant subspace $\overline{W}$ complementary to it. Now let us consider the subspace
\[W=\spa\{\overline{f}(m,n)~|~(p-1)l\leq m\leq pl-1,0\leq n\leq l-1\}.\]
It is easily seen that $W$ is $\mathcal{A}$-invariant. One can prove that it does not have an invariant complementary subspace $\overline{W}$ in $Q_{p,l}$. Otherwise a non zero element $x\in \overline{W}$ can be written as
\[x=\sum\limits_{n=0}^{l-1}\sum\limits_{m=0}^{(p-1)l-1}C_{mn}\overline{f}(m,n)+\sum\limits_{n=0}^{l-1}\sum\limits_{m=(p-1)l}^{pl-1}D_{mn}\overline{f}(m,n),\] where there is at least one non zero $C_{mn}$. Let $m'=\mi\{m~|~C_{mn}\neq 0\}$. Then we have $xY^{(p-1)l-m'}\in W$. This contradicts the assumption that $\overline{W}$ is $\mathcal{A}$-invariant.\\
(3) Recall that a module is indecomposable if it is non zero and cannot be written as a direct sum of two non zero submodules. With the argument in (2) one can easily prove that any of the non zero proper $\mathcal{A}$-invariant subspaces $M_{r,l}/M_{p,l},1\leq r\leq p-1$ in $Q_{p,l}$, there is no invariant subspace complementary to it. Thus when $p>1$, the module $Q_{p,l}$ is a finite dimensional indecomposable $\mathcal{A}$-module. 
\end{proof}
\begin{remak}
If $q$ is an even primitive $l$-th root of unity, then one can make similar discussion. Here one can get the $\mathcal{A}$-invariant subspaces 
\[M_{p,\frac{l}{2}}:=\spa\{f\left(\frac{pl}{2}+i,n\right)~|~i\geq 0,0\leq n\leq l-1\},\ \ \ \ p=1,2,\cdots\]
of $M(\lambda_1,\lambda_2)$, owing to the equation $f\left(\frac{pl}{2},n\right)E=0$.
\end{remak}

\section{\bf{Simple modules over $\mathbb{K}_q[X,Y]\rtimes U_q(\mathfrak{sl}_2)$}}
In this section, we will consider the smash product algebra $A:=\mathbb{K}_q[X,Y]\rtimes U_q(\mathfrak{sl}_2)$ (cf. \cite{bavlu}). As an abstract algebra, the algebra $A$ is generated over $\mathbb{K}$ by elements $E,F,K,K^{-1},X$ and $Y$ subject to the defining relations (where $K^{-1}$ is the inverse of $K$):
\[KEK^{-1}=q^2E,~ KFK^{-1}=q^{-2}F,~EF-FE=\frac{K-K^{-1}}{q-q^{-1}},\]
\[EX=qXE,~EY=X+q^{-1}YE,~FX=YK^{-1}+XF,~FY=YF,\]
\[KXK^{-1}=qX,~KYK^{-1}=q^{-1}Y,~XY=qYX.\]
Using the defining relations of the algebra A, the quantum spatial ageing algebra $\mathcal{A}$ is a subalgebra of $A$ and the algebra $A$ is a skew polynomial algebra $A=\mathcal{A}[F,\sigma,\delta]$, where $\sigma$ is an automorphism of $\mathcal{A}$ such that $\sigma(K)=q^2K,\sigma(E)=E,\sigma(X)=X,\sigma(Y)=Y$ and $\delta$ is a $\sigma$-derivation of the algebra $\mathcal{A}$ such that $\delta(K)=0,\delta(E)=\frac{K-K^{-1}}{q-q^{-1}},\delta(X)=YK^{-1},\delta(Y)=0$ (cf. \cite{bavlu}, eq(2.18), eq(2.19)).
\par In this section we aim to classify simple $A$-modules on which $X$ and $Y$ acting as invertible operator assuming that $\mathbb{K}$ is an algebraically closed field and $q$ is a primitive $l$-th root of unity with $l\geq 3$.
\par Let $T$ be the subalgebra of $A$ generated by the elements $K^{\pm 1},X$ and $Y$. Clearly, $T:=\mathbb{K}_{q}[X,Y][K^{\pm 1},{\tau}]$ and $\tau(X)=qX,\tau(Y)=q^{-1}Y$. Let us consider two elements \[\phi:=EY-qYE\ \ \text{and}\ \ \psi:=XF-q^2FX\] of the algebra $A$ (cf. \cite{bavlu}). Observe that $\phi$ is a normal element of $\mathcal{A}$ (see (\ref{r4})) but not a normal of $A$ (cf. \cite{bavlu}, Lemma 2.6).
Let $B$ be the subalgebra of $A$ generated by the algebra $T$ and the elements $\phi$ and $\psi$. The generators $K^{\pm 1},X,Y,\phi$ and $\psi$ of $B$ satisfy the following relations:
\[\phi X=X\phi,~\phi Y=q^{-1}Y\phi,~\phi K=q^{-1}K\phi,\]
\[\psi X=X\psi,~\psi Y=qY\psi,~\psi K=qK\psi,\]
\[\psi\phi-\phi\psi=q(1-q^2)KYX.\]
In particular, the algebra $B=T[\phi][\psi,\alpha,\delta]$ is skew polynomial algebra (cf. \cite{bavlu}), where $\alpha$ is an automorphism of $T[\phi]$ such that $\alpha(X)=X,~\alpha(Y)=qY,~\alpha(K)=qK,~\alpha(\phi)=\phi$ and $\delta$ is a $\alpha$-derivation of $T[\phi]$ such that $\delta(X)=\delta(Y)=\delta(K)=0$ and $\delta(\phi)=q(1-q^2)KYX$. It is easy to observe that the elements $K^{\pm 1}, X$ and $Y$ are normal in $B$ but not in $A$. 
\subsection{PI Algebras $A$ and $B$}
Here we will use Proposition \ref{f}  for proving $A$ and $B$ are PI-algebra. The following lemmas will be very useful.
\begin{lemm}\label{acop}
\begin{enumerate}
    \item The following identities hold in the algebra $A$:
    \begin{enumerate}
        \item [(i)] $FX^s=X^sF+\displaystyle\frac{1-q^{2s}}{1-q^2}YK^{-1}X^{s-1}$
        \item [(ii)] $XF^r=F^rX-\displaystyle\frac{1-q^{2r}}{1-q^2}YF^{r-1}K^{-1}$
        \item [(iii)] $EF^s=F^sE+\displaystyle\frac{q^s-q^{-s}}{q-q^{-1}}F^{s-1}\displaystyle\frac{Kq^{1-s}-K^{-1}q^{s-1}}{q-q^{-1}}$
        \item [(iv)] $FE^r=EF^r-\displaystyle\frac{q^r-q^{-r}}{q-q^{-1}}E^{r-1}\displaystyle\frac{Kq^{r-1}-K^{-1}q^{1-r}}{q-q^{-1}}$
        \item [(v)] $\psi^s \phi =\phi \psi^s+q(1-q^{2s})KYX\psi^{s-1}$
        \item [(vi)] $\psi \phi^r =\phi^r \psi+q^3(q^{-2r}-1)KYX\phi^{r-1}$.
    \end{enumerate}
\end{enumerate}
\end{lemm}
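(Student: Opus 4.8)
The plan is to prove all six identities in Lemma~\ref{acop} by induction on the exponent ($s$ or $r$), with each one reducing to the corresponding base relation among the generators. Identities (i)--(iv) are the standard commutation formulas in the quantized enveloping setting and its smash-product extension, while (v)--(vi) concern the commutator of $\phi$ and $\psi$ inside the subalgebra $B$; I would dispatch them in two groups since the tools differ slightly.

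For the first group, I would start from the defining relations of $A$. For (i), the base case $s=1$ is exactly the relation $FX=YK^{-1}+XF$ (rewritten as $FX=XF+YK^{-1}$). For the inductive step I would write $FX^{s+1}=(FX^s)X$, substitute the inductive hypothesis, and then push the single $F$ past one more $X$ using the base case, carefully tracking how $K^{-1}$ and $Y$ commute past $X$ via $KXK^{-1}=qX$ and $XY=qYX$; collecting the resulting geometric-type sum gives the factor $\frac{1-q^{2(s+1)}}{1-q^2}$. Identity (ii) is the mirror image obtained by iterating $XF=FX-YF^{\,0}K^{-1}$-type rewriting, and (iii)--(iv) follow the same induction scheme but are driven by the base relation $EF-FE=\frac{K-K^{-1}}{q-q^{-1}}$ together with $KEK^{-1}=q^2E$ and $KFK^{-1}=q^{-2}F$; here the bookkeeping of the $q$-powers attached to $K$ and $K^{-1}$ is what produces the balanced numerators $Kq^{1-s}-K^{-1}q^{s-1}$ and $Kq^{r-1}-K^{-1}q^{1-r}$.

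For (v)--(vi) I would work entirely inside $B=T[\phi][\psi,\alpha,\delta]$, where the single commutation relation $\psi\phi-\phi\psi=q(1-q^2)KYX$ together with $\alpha(\phi)=\phi$ and the commutation rules of $\phi,\psi$ with $K,X,Y$ recorded just before the lemma is all that is needed. For (v), induction on $s$ uses $\psi^{s+1}\phi=\psi(\psi^s\phi)$ and the hypothesis, after which one must move $\psi$ past the normal-type element $KYX$; the relations $\psi K=qK\psi$, $\psi Y=qY\psi$, $\psi X=X\psi$ show $\psi\cdot KYX=q^2 KYX\cdot\psi$, and combining this with the $q(1-q^2)$-term and simplifying the arising sum yields the stated coefficient $q(1-q^{2(s+1)})$. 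Identity (vi) is the symmetric statement obtained by inducting on the power of $\phi$ instead, using $\phi\cdot KYX = q^{?}KYX\cdot\phi$ (computed from $\phi K=q^{-1}K\phi$, $\phi Y=q^{-1}Y\phi$, $\phi X=X\phi$) to regroup the terms.

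\emph{The main obstacle} I anticipate is purely computational rather than conceptual: in each induction the delicate part is keeping the scalar $q$-powers consistent when commuting $K^{\pm1}$, $X$, and $Y$ past the transported generator, since an off-by-one error in any exponent breaks the telescoping that collapses the accumulated terms into the closed-form coefficient. I would therefore fix once and for all the exact scalar by which $KYX$ (resp.\ $K^{-1}X^{s-1}$, etc.) is multiplied when commuted past $\psi$ or $\phi$, verify the base case against the defining relation it specializes to, and then let the induction run uniformly; no genuinely new idea beyond the defining relations and a disciplined induction is required.
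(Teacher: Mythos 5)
Your proposal is correct and is essentially the argument the paper intends: the paper states Lemma~\ref{acop} without proof (as with Lemma~\ref{c1}, the identities are meant to be verified by induction from the defining relations), and your inductions close properly — in particular the key scalar facts $\psi\, KYX = q^{2}\, KYX\,\psi$ and $\phi\, KYX = q^{-2}\, KYX\,\phi$ are exactly what make the coefficients $q(1-q^{2s})$ and $q^{3}(q^{-2r}-1)$ telescope. One small point worth noting: in item (iv) the stated right-hand side $EF^{r}$ is a typo for $E^{r}F$ (the two sides are otherwise inhomogeneous in $E$ and $F$), and your induction scheme in fact proves the corrected identity $FE^{r}=E^{r}F-\frac{q^{r}-q^{-r}}{q-q^{-1}}E^{r-1}\frac{Kq^{r-1}-K^{-1}q^{1-r}}{q-q^{-1}}$.
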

\begin{lemm}\label{it}
Let $q$ be a primitive $l$-th root of unity. Then
\begin{enumerate}
    \item $K^{\pm l},E^{l},F^{l},X^{l}$ and $Y^{l}$ are contained in the center of ${A}$.
    \item $K^{\pm l},X^{l},Y^{l},\phi^{l}$ and $\psi^{l}$ are contained in the center of ${B}$.
\end{enumerate}
These two results will be used throughout this section.
\end{lemm}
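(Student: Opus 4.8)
The plan is to use the standard criterion that an element of an algebra is central exactly when it commutes with each generator, so in each case it suffices to test commutation with the finitely many generators in the given presentations: $E,F,K^{\pm1},X,Y$ for $A$, and $K^{\pm1},X,Y,\phi,\psi$ for $B$. The verifications fall into two flavours. Most generator pairs obey a purely multiplicative relation $ab=q^{k}ba$ for some $k\in\mathbb{Z}$; for such a pair one has $a^{l}b=q^{kl}ba^{l}=ba^{l}$ since $q^{l}=1$, so the $l$-th power of either generator commutes with the other. This single observation disposes of $K^{\pm l}$ against every generator of $A$ and of $B$, of $X^{l}$ and $Y^{l}$ against $K,X,Y$, of $\phi^{l}$ and $\psi^{l}$ against $K,X,Y$, and of $E^{l},F^{l}$ against $K$ and against their quantum-plane partners ($X,Y$ as appropriate).

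The genuinely nontrivial commutations are precisely those crossing a relation with an additive correction term: $EF-FE=\tfrac{K-K^{-1}}{q-q^{-1}}$, $EY=X+q^{-1}YE$ and $FX=YK^{-1}+XF$ in $A$, and $\psi\phi-\phi\psi=q(1-q^{2})KYX$ in $B$. Here I would feed the iterated identities of Lemma \ref{c1} and Lemma \ref{acop} with the exponent specialised to $l$. For example, Lemma \ref{acop}(iii)--(iv) give $EF^{l}=F^{l}E+\tfrac{q^{l}-q^{-l}}{q-q^{-1}}(\cdots)$ and $FE^{l}=E^{l}F-\tfrac{q^{l}-q^{-l}}{q-q^{-1}}(\cdots)$; Lemma \ref{c1}(ii) gives $YE^{l}=q^{l}E^{l}Y-\tfrac{q(1-q^{2l})}{1-q^{2}}XE^{l-1}$; Lemma \ref{acop}(ii) gives $XF^{l}=F^{l}X-\tfrac{1-q^{2l}}{1-q^{2}}YF^{l-1}K^{-1}$; and Lemma \ref{acop}(v)--(vi) handle $\psi^{l}\phi$ and $\psi\phi^{l}$. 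In every one of these cases the correction term is a scalar multiple of $q^{l}-q^{-l}$, of $1-q^{2l}$, or of $q^{-2l}-1$, each of which vanishes because $q$ is a primitive $l$-th root of unity. Hence $E^{l}$ commutes with $F$ and with $Y$, $F^{l}$ commutes with $E$ and with $X$, and $\phi^{l},\psi^{l}$ commute with each other, which is what centrality in these directions requires.

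For the generators straddling the two subalgebras I would combine the two flavours. By Lemma \ref{cc1} the elements $X^{l}$ and $Y^{l}$ are already central in $\mathcal{A}$, so in $A$ only their commutation with $F$ is new: this follows from Lemma \ref{acop}(i) for $X^{l}$ and from $FY=YF$ for $Y^{l}$. Likewise in $B$, once $\phi^{l}$ and $\psi^{l}$ are shown to commute with $K,X,Y$ by the multiplicative relations and with each other by Lemma \ref{acop}(v)--(vi), nothing further remains. The only step that is more than bookkeeping is the root-of-unity cancellation of the correction terms, and since each such term is manifestly proportional to a factor that is zero at a primitive $l$-th root of unity, I expect no real obstacle; the main care is simply to record the exponent specialisations correctly and to invoke the right identity from Lemma \ref{c1} or Lemma \ref{acop} for each non-commuting pair.
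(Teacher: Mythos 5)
Your proposal is correct and is essentially the argument the paper intends: the paper states Lemma \ref{it} without proof, but Lemma \ref{acop} and Lemma \ref{c1} are placed immediately beforehand precisely to supply the correction-term identities you specialise at exponent $l$, with the purely multiplicative relations handling everything else — exactly your two-flavour decomposition. The only point worth recording is that you silently (and correctly) read Lemma \ref{acop}(iv) as $FE^{r}=E^{r}F-\cdots$ rather than the paper's literal $FE^{r}=EF^{r}-\cdots$, which is evidently a typo.
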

\begin{prop} \label{pia}
The algebra ${A}$ (respectively, $B$) is a PI algebra if and only if $q$ is a root of unity.
\end{prop}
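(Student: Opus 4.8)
The plan is to mirror the structure of Proposition \ref{finite}, which already settled the analogous statement for $\mathcal{A}$, and to exploit the inclusion $\mathcal{A}\subset A$ together with the skew polynomial presentations established just above. For the nontrivial direction, I would assume $q$ is a primitive $l$-th root of unity and produce in each case a central affine subalgebra over which $A$ (respectively $B$) is a finitely generated module, so that Proposition \ref{f} applies. For $A$, the natural candidate is the subalgebra $Z_A$ generated by $K^{\pm l}, E^l, F^l, X^l$ and $Y^l$; by Lemma \ref{it}(1) these all lie in the center of $A$, hence $Z_A$ is a central subalgebra. For $B$, the candidate is the subalgebra $Z_B$ generated by $K^{\pm l}, X^l, Y^l, \phi^l$ and $\psi^l$, which is central by Lemma \ref{it}(2).

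The key step for $A$ is to check that the finite set $\{X^aY^bE^cF^dK^{\pm e}\mid 0\le a,b,c,d,e< l\}$ spans $A$ as a module over $Z_A$. First I would fix a PBW-type $\mathbb{K}$-basis of $A$: since $A=\mathcal{A}[F,\sigma,\delta]$ is a skew polynomial extension of $\mathcal{A}$ and $\mathcal{A}$ has the basis from Proposition \ref{imp}, the monomials $X^aY^bE^cF^dK^{\pm e}$ (with nonnegative exponents) form a $\mathbb{K}$-basis of $A$. Then I would argue that every such monomial lies in the $Z_A$-span of the reduced monomials with $0\le a,b,c,d,e<l$: whenever an exponent reaches $l$, the corresponding $l$-th power is central and can be absorbed into the coefficient ring $Z_A$, reducing that exponent modulo $l$. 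This reduction is exactly the same bookkeeping as in Proposition \ref{finite}, now carried out over five generators instead of four. Hence $A$ is a finitely generated $Z_A$-module, and Proposition \ref{f} gives that $A$ is PI. The argument for $B$ is identical in spirit, using the skew polynomial presentation $B=T[\phi][\psi,\alpha,\delta]$ to obtain the $\mathbb{K}$-basis $\{X^aY^bK^{\pm c}\phi^d\psi^e\}$ and then reducing exponents modulo $l$ using the central elements of $Z_B$ furnished by Lemma \ref{it}(2).

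For the converse, I would invoke the same obstruction used for $\mathcal{A}$: the $\mathbb{K}$-subalgebra of $A$ (and of $B$) generated by $X$ and $Y$ with relation $XY=qYX$ is a copy of the quantum plane, which is not PI when $q$ is not a root of unity by \cite[Proposition I.14.2]{brg}. Since a subalgebra of a PI algebra is PI, neither $A$ nor $B$ can be PI in that case. This direction requires no new work beyond noting that $\mathbb{K}_q[X,Y]$ embeds in both $A$ and $B$.

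The main obstacle is purely the verification that the reduced monomials span over the central subalgebra, and specifically the interplay between the reductions and the cross-relations: in contrast to $\mathcal{A}$, the relations in $A$ involving $F$ (for instance $EF-FE=\frac{K-K^{-1}}{q-q^{-1}}$ and the commutators in Lemma \ref{acop}) are genuinely non-monomial, so reordering and exponent-reduction must be organized carefully to ensure the remainder terms always land in the $Z_A$-span of reduced monomials. I expect this to be handled cleanly by the skew polynomial structure $A=\mathcal{A}[F,\sigma,\delta]$, which guarantees the freeness of $A$ as a left $\mathcal{A}$-module and thereby keeps the $F$-degree reduction under control; the commutation identities in Lemma \ref{acop} then only contribute lower-degree correction terms that are themselves reducible. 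Consequently the proof reduces to the same finite-generation bookkeeping already validated for $\mathcal{A}$, and I would present it as two short parallel paragraphs for $A$ and $B$ followed by the one-line converse.
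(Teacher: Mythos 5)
Your proposal is correct and follows essentially the same route as the paper: the paper's proof is literally ``the argument is the same as Proposition \ref{finite} using Lemma \ref{it}'', i.e.\ finite generation over the central subalgebra of $l$-th powers plus Proposition \ref{f}, with the embedded quantum plane $\mathbb{K}_q[X,Y]$ giving the converse. Your only added worry---that the non-monomial relations involving $F$ could disrupt the exponent reduction---is in fact moot, since once the PBW basis $\{X^aY^bE^cF^dK^{\pm e}\}$ is fixed, each basis monomial factors directly as a product of central $l$-th powers times a reduced monomial, with no reordering corrections needed.
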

\begin{proof}
The argument for this is same as Proposition $\ref{finite}$ using Lemma $\ref{it}$.
\end{proof}
Now we aim to compute an explicit expression of PI-deg for $\mathcal{A}$ in the following. First using the equivalent expression of the elements \begin{equation}\label{eqex}
    \phi=X-(q-q^{-1})YE\ \ \ \text{and}\ \ \  \psi=(1-q^2)XF-q^2YK^{-1}
\end{equation} we see that 
\begin{equation}\label{spi}
    A_{X,Y}=B_{X,Y}
\end{equation} where $A_{X,Y}$ and $B_{X,Y}$ are the localization
of the algebras $A$ and $B$ respectively at the powers of the elements $X$ and $Y$ (cf. \cite{bavlu}, eq(2.12)). Next observe that \[B\subset {B}_{X,Y}\subset \cf{B}\ \ \text{and}\ \ A\subset {A}_{X,Y}\subset \cf{A}\] and hence it follows from \cite[Corollary I.13.3]{brg} and relation (\ref{spi}) that \[\pideg(B)=\pideg({B}_{X,Y})=\pideg({A}_{X,Y})=\pideg({A}).\] Note that the skew relation $\alpha \delta=q^2\delta \alpha,~(q^2\neq 1)$ is satisfied on $T[\phi]$. Then the derivation erasing process (independent of characteristic) in \cite[Theorem 7]{lm2} provides $\cf {{B}}\cong \cf \mathcal{O}_{\mathbf{q}}(\mathbb{K}^{5})$ and hence $\pideg(B)=\pideg \mathcal{O}_{\mathbf{q}}(\mathbb{K}^5)$, where the $(5\times 5)$ matrix of relations $\mathbf{q}$ is \[\mathbf{q}=\begin{pmatrix}
1&q&q^{-1}&1&1\\
q^{-1}&1&q&q&q^{-1}\\
q&q^{-1}&1&q&q^{-1}\\
1&q^{-1}&q^{-1}&1&1\\
1&q&q&1&1
\end{pmatrix}.\]
The integral matrix associated to $\mathbf{q}$ has rank $4$ and invariant factors $1,1,2,2$. Therefore from above discussions along with the help of Lemma \ref{mainpi} we have $$\text{PI deg}~{(A)}=\text{PI deg}~{(B)}=\pideg \mathcal{O}_{\mathbf{q}}(\mathbb{K}^5)=
\begin{cases}
 l^2,&  l~ \text{odd}\\
 \frac{l^2}{2}, &  l~ \text{even}
\end{cases}$$
\subsection{Relationship between simple $A$-module and simple $B$-module}
Here we wish to establish a connection between simple $A$-modules and simple $B$-modules. From Proposition $\ref{sim}$ along with Proposition $\ref{pia}$, it is quite clear that each simple $A$-module is finite dimensional and can have dimension at most $\pideg(A)$. Same is true for the algebra $B$ also.
\par Let $N$ be a $X,Y$-torsion free simple $B$-module. Then the operators $X$ and $Y$ on $N$ are invertible. With this fact and the equivalent expression of $\phi$ and $\psi$ in (\ref{eqex}), one can define the action of $E$ and $F$ on $N$ explicitly. Thus $N$ becomes a $A$-module which is simple as well. Hence any $X,Y$-torsion free simple $B$-module is a simple $A$-module with invertible action of $X$ and $Y$.
\par On the other hand suppose $M$ be a simple $A$-module with invertible action of $X$ and $Y$ on $M$. Now we claim that $M$ is a $X,Y$-torsion free simple $B$-module. Clearly $M$ is a finite dimensional $B$-module. Suppose $M'$ be a non zero simple $B$-submodule of $M$. As $X$ and $Y$ are invertible operator on $M'$, therefore one can define the action of $E$ and $F$ on $M'$. Thus $M'$ becomes a $A$-module with $M'\subseteq M$. Therefore $M=M'$ and $M$ is a $X,Y$-torsion free simple $B$-module. Thus we have proved that
\begin{theo}\label{itd}
There is an one to one correspondence between $X,Y$-torsion free simple $B$-modules and simple $A$-modules with invertible action of $X$ and $Y$.
\end{theo}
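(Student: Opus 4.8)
The plan is to exploit two facts already established. First, on any module where $X$ and $Y$ act invertibly, the equivalent expressions in (\ref{eqex}) recover the operators $E$ and $F$ from the generators of $B$; solving them gives $E=(q-q^{-1})^{-1}Y^{-1}(X-\phi)$ and $F=(1-q^2)^{-1}X^{-1}(\psi+q^2YK^{-1})$. Second, the localisation identity (\ref{spi}), namely $A_{X,Y}=B_{X,Y}$, lets us transport module structures between $A$ and $B$ without checking any defining relation by hand. I would prove that the two assignments $N\mapsto N$ and $M\mapsto M$, with the same underlying vector space and the structures just indicated, are well defined and mutually inverse.

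First I would treat the passage from $B$ to $A$. Let $N$ be an $X,Y$-torsion free simple $B$-module. Since $X$ and $Y$ are normal in $B$ and act injectively on the finite-dimensional space $N$, they act invertibly; hence the multiplicative set they generate (an Ore set, by normality) acts invertibly and the $B$-action extends uniquely to a $B_{X,Y}$-module structure on $N$. By (\ref{spi}) we have $B_{X,Y}=A_{X,Y}$, so $N$ is an $A_{X,Y}$-module, and restricting scalars along the inclusion $A\hookrightarrow A_{X,Y}$ makes $N$ an $A$-module, with $E$ and $F$ acting by the displayed formulas. Because $B\subseteq A$, every $A$-submodule of $N$ is in particular a $B$-submodule, so simplicity of $N$ as a $B$-module forces it to be simple as an $A$-module, with $X,Y$ invertible by construction.

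Next I would treat the passage from $A$ to $B$. Let $M$ be a simple $A$-module on which $X$ and $Y$ act invertibly. By Proposition \ref{sim} together with Proposition \ref{pia}, $M$ is finite dimensional, and restricting along $B\subseteq A$ makes it a finite-dimensional $B$-module which is $X,Y$-torsion free. Choose a nonzero simple $B$-submodule $M'\subseteq M$. As $M'$ is $B$-invariant, hence $X,Y$-invariant, and $X,Y$ act injectively on it, they act invertibly on $M'$; so by the previous paragraph $M'$ carries an $A$-module structure whose $E,F$ are given by the same formulas. Since those formulas agree with the ambient operators already defined on $M$, this $A$-structure on $M'$ is precisely the restriction of the $A$-action on $M$, making $M'$ a nonzero $A$-submodule. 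Simplicity of $M$ then gives $M'=M$, so $M$ is an $X,Y$-torsion free simple $B$-module.

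Finally the two constructions are inverse to one another, each being the identity on the underlying vector space and, by the displayed formulas for $E$ and $F$, compatible on all generators; this yields the asserted one-to-one correspondence. The only genuine obstacle is confirming that the recovered $E,F$ really furnish an $A$-action satisfying all the defining relations of $A$, and I sidestep the relation-by-relation verification entirely by routing through the algebra isomorphism $A_{X,Y}=B_{X,Y}$ of (\ref{spi}). The secondary point requiring care is that a simple $B$-submodule $M'$ of $M$ genuinely inherits the invertibility of $X$ and $Y$ and that its intrinsic $A$-structure coincides with the restriction of the $A$-action on $M$.
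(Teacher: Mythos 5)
Your proposal is correct and takes essentially the same route as the paper: both directions keep the underlying vector space fixed, recover $E$ and $F$ from $\phi$ and $\psi$ via the expressions in (\ref{eqex}), and, for the converse, pass to a nonzero simple $B$-submodule $M'\subseteq M$, check that the recovered $E,F$ preserve $M'$ and agree with the ambient action, and invoke simplicity of $M$ to get $M'=M$. The one refinement you add is to justify that the recovered operators genuinely define an $A$-module structure by extending the $B$-action to $B_{X,Y}$ and invoking the equality $A_{X,Y}=B_{X,Y}$ of (\ref{spi}), whereas the paper simply asserts that one can define the actions of $E$ and $F$ explicitly; your localization detour supplies the relation-checking the paper leaves implicit, so it is a sharpening of the same argument rather than a different proof.
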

In the next subsection we aim to classify $X,Y$-torsion free simple $B$-modules, which are quite similar to the subsection $5.4$ for the algebra $\mathcal{A}$.
\subsection{Classification of $X,Y$-torsion free simple $B$-modules} Let $N$ be a $X,Y$-torsion free simple $B$-module. Now depending on $l$, we will consider two cases separately.\\
\subsubsection{\bf{Case A}} ($l$-odd) Let $q$ be an odd primitive $l$-th root of unity. Since each of the elements
\begin{equation}\label{opc1}
    \phi^l,\psi^l,X^l,K,YX,\psi\phi
\end{equation} commutes and $N$ is a finite dimensional vector space over $\mathbb{K}$, there is a common eigen vector $v$ of the operators in (\ref{opc1}). Put
\[
\begin{array}{l}
   v\phi^l=\alpha v,~v\psi^l=\beta v,~vX^l=\xi v,\ \ \alpha,\beta,\xi\in\mathbb{K} \\
   vK=\lambda_1 v,~vYX=\lambda_2 v,~v\psi\phi=\lambda_3 v,\ \ \lambda_i\in \mathbb{K}.
\end{array}
\]
Clearly $\xi,\lambda_1,\lambda_2\in \mathbb{K}^{*}$. In view of Lemma \ref{it} the central elements act as multiplication by scalar on $N$ by Schur’s lemma.
Then the following cases arise depending on scalars $\alpha$ and $\beta$:\\
\textbf{Case 1:} $\alpha\neq 0$. Consider the vector subspace $N_1$ of $N$ generated by the non zero vectors $e(a,b):=v\phi^aX^b$, where $0\leq a,b\leq l-1$. We claim that $N_1$ is a $B$-submodule. In fact after some direct calculation we get
\[\begin{array}{l}
    e(a,b)K^{\pm 1}=q^{-b-a}\lambda_1e(a,b)\\
    e(a,b)\phi=e(a+1,b)\\
       e(a,b)\psi=\begin{cases}
        (\lambda_3-q^3(q^{-2a}-1)\lambda_1\lambda_2)e(a-1,b),& a\neq 0\\
        \alpha^{-1}\lambda_3e(l-1,b),& a=0
       \end{cases}\\
       e(a,b)X=e(a,b+1)\\
       e(a,b)Y=\begin{cases}
        q^{b-a}\lambda_2e(a,b-1),& b\neq 0\\
        \xi^{-1}q^{-a}\lambda_2e(a,l-1),& b=0
       \end{cases}
       \end{array}\]
Therefore owing to simpleness of $N$, $N=N_1$. Now the following result deciphers the dimension of $N_1$.
\begin{theo}\label{dimag}
The $B$-module $N_1$ has dimension $l^2$.
\end{theo}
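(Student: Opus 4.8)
The plan is to deduce the statement from a single fact: the $l^{2}$ spanning vectors $e(a,b)=v\phi^{a}X^{b}$ with $0\le a,b\le l-1$ are linearly independent. Since $N_{1}$ is by definition their $\mathbb{K}$-span and each is nonzero, this yields $\dim_{\mathbb{K}}N_{1}=l^{2}$ at once. The nonvanishing of each $e(a,b)$ is clear because $\phi^{l}=\alpha$ and $X^{l}=\xi$ act as the nonzero scalars $\alpha$ (the Case~$1$ hypothesis) and $\xi$ (as $X$ is torsion free), so $\phi^{a}$ and $X^{b}$ are invertible operators and $v\neq 0$. I also record $\lambda_{1}\in\mathbb{K}^{*}$ (since $K$ is invertible) and $\lambda_{2}\in\mathbb{K}^{*}$ (since $YX$ is invertible on the $X,Y$-torsion free module $N$).

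The heart of the argument is to exhibit two commuting operators that are simultaneously diagonalized by the $e(a,b)$ and whose joint eigenvalue separates all pairs $(a,b)$. I take $K$ and $\psi\phi$. They commute: from $K\psi=q^{-1}\psi K$ and $K\phi=q\phi K$ one gets $K(\psi\phi)=(\psi\phi)K$. The displayed action gives $e(a,b)K=q^{-(a+b)}\lambda_{1}e(a,b)$, so $K$ has eigenvalue $q^{-(a+b)}\lambda_{1}$. Composing the action of $\psi$ and then $\phi$ (and checking the wrap-around case $a=0$, where the factor $\alpha^{-1}$ from $e(0,b)\psi$ cancels the factor $\alpha$ picked up from $e(l-1,b)\phi$) shows that $\psi\phi$ acts diagonally with eigenvalue $c_{a}:=\lambda_{3}-q^{3}(q^{-2a}-1)\lambda_{1}\lambda_{2}$, depending only on $a\bmod l$.

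It then remains to show the assignment $(a,b)\mapsto\bigl(q^{-(a+b)}\lambda_{1},\,c_{a}\bigr)$ is injective on $\{0,\dots,l-1\}^{2}$. As $l$ is odd, $q^{-2}$ is a primitive $l$-th root of unity, so $q^{-2a}$ takes $l$ distinct values as $a$ runs over $0,\dots,l-1$; combined with $\lambda_{1}\lambda_{2}\neq 0$ this forces the scalars $c_{a}$ to be pairwise distinct, so the $\psi\phi$-eigenvalue recovers $a$. Knowing $a$, the $K$-eigenvalue $q^{-(a+b)}\lambda_{1}$ recovers $a+b\bmod l$ and hence $b$. Therefore distinct $(a,b)$ give distinct joint eigenvalues; since nonzero eigenvectors of a commuting pair attached to distinct joint eigenvalues are linearly independent, the family $\{e(a,b)\}$ is independent and $\dim_{\mathbb{K}}N_{1}=l^{2}$.

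I expect the main obstacle to be the bookkeeping that certifies $\psi\phi$ is genuinely diagonal with eigenvalue $c_{a}$ across the wrap-around index $a=0$, and the verification that the $c_{a}$ are pairwise distinct --- this is precisely where the oddness of $l$ (making $2$ invertible modulo $l$) and $\lambda_{1}\lambda_{2}\neq 0$ are used. As an independent consistency check, and an alternative route to the bound $\dim_{\mathbb{K}}N_{1}\le l^{2}$, one may invoke $\dim_{\mathbb{K}}N\le\pideg(B)=l^{2}$ from Proposition~\ref{sim} together with Proposition~\ref{pia}, noting that $N=N_{1}$ by simplicity of $N$.
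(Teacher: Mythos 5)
Your proof is correct and takes essentially the same route as the paper: the paper likewise observes that the $e(a,b)$ are joint eigenvectors of two commuting operators whose joint eigenvalues are pairwise distinct precisely because $l$ is odd (so $2$ is invertible modulo $l$), and concludes linear independence by the separation argument of Theorem~\ref{f1}. The only difference is the choice of the second operator: the paper pairs $K$ with $YX$ (eigenvalues $\lambda_1 q^{-a-b}$ and $\lambda_2 q^{b-a}$ on $e(a,b)$), which avoids the wrap-around bookkeeping that your choice of $\psi\phi$ requires, but both choices work.
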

\begin{proof}
Note that the vector $e(a,b)\in N_1$ is an eigenvector of $K$ and $YX$ associated with the eigenvalue $\lambda_1q^{-b-a}$ and $\lambda_2q^{b-a}$ respectively. Using the same argument as in Theorem $\ref{f1}$, we can obtain the desired result.
\end{proof}
~\\
\textbf{Case 2:} $\alpha= 0$ and $\beta\neq 0$. Consider the vector subspace $N_2$ of $N$ generated by the non zero vectors $e(a,b):=v\psi^aX^b$, where $0\leq a,b\leq l-1$. Using the similar argument as above, we conclude that $N_2$ is $B$-invariant submodule of $N$. Hence $N_2$ is a simple $B$-module of dimension $l^2$ (similar argument as in Theorem \ref{dimag}).\\
\textbf{Case 3:} $\alpha= 0$ and $\beta=0$.
Let $r$ be the smallest integer with $1 \leq r \leq m$ such that  $v\phi^r=0$ and $v\phi^{r-1} \neq 0$. Define $u:=v\phi^{r-1}$. Let $s$ be the smallest integer with $1 \leq s \leq m$ such that $u\psi^s=0$ and $u\psi^{s-1} \neq 0$. We now claim that $s=l$. Indeed,
$$\begin{array}{cl}
0=u\psi^s\phi&=u\left(\phi\psi^s+q(1-q^{2s})KYX\psi^{s-1}\right)\\
&=q(1-q^{2s})uKYX\psi^{s-1}\\
&=q(1-q^{2s})q^{-2r+2}\lambda_1\lambda_2u\psi^{s-1} 
\end{array}$$
This implies $s$ is the smallest index such that $q^{2s}-1=0$ and hence $s=l$. 
Thus $u=v\phi^{r-1}(\neq 0)\in N$ is such that  
\[u\phi=0,~u\psi^{l}=0,~u\psi^{l-1}\neq 0,~uX^l=\xi u,\]
\[~uK=q^{-r+1}\lambda_1u,~uYX=q^{-r+1}\lambda_2u, u\psi \phi=q(1-q^{2})q^{-2r+2}\lambda_1\lambda_2u.\]
Now the vector subspace $N_3$ of $N$ generated by the non zero vectors $e(a,b):=u\psi^aX^b$, where $0\leq a,b\leq l-1$ is a $B$-invariant submodule of $N$. Therefore $N_3$ is a simple $B$-module of dimension $l^2$ (similar argument as in Theorem \ref{dimag}).\\
\subsubsection{\bf{Case B}} ($l$-even) Let $q$ be an even primitive $l$-th root of unity. Here one can expect more commuting operators than odd case. By the defining relation of $B$ along with Lemma \ref{acop}, it follows that each of the elements 
\begin{equation}\label{opc2}
   \phi^l,\psi^l,X^l,K,YX,\psi\phi, \phi^{\frac{l}{2}}X^{\frac{l}{2}},\ \psi^{\frac{l}{2}}X^{\frac{l}{2}} 
\end{equation}
commutes. Therefore there is a common eigenvector $v$ of the operators (\ref{opc2}) on $N$.
Put
\begin{equation}
    v\phi^l=\alpha v,~v\psi^l=\beta v,~vX^l=\xi v,
\end{equation}
\begin{equation}
   vK=\lambda_1 v,~vYX=\lambda_2 v,~v\psi\phi=\lambda_3 v,\ \ \end{equation}
\begin{equation}\label{ab1}
    v\phi^{\frac{l}{2}}X^{\frac{l}{2}}=\alpha'v,~v\psi^{\frac{l}{2}}X^{\frac{l}{2}}=\beta'v,
\end{equation}
for some $\alpha,\alpha',\beta,\beta'\in \mathbb{K}$ and $\xi,\lambda_1,\lambda_2\in \mathbb{K}^*$. Now the relations in $(\ref{ab1})$ along with its existing scalars can be expressed as
\begin{equation}\label{evv1}
v\phi^{\frac{l}{2}}=\xi^{-1}\alpha'vX^{\frac{l}{2}},~(\alpha')^{2}=\alpha \xi.
\end{equation}
\begin{equation}\label{evv2}
    v\psi^{\frac{l}{2}}=\xi^{-1}\beta'vX^{\frac{l}{2}},~(\beta')^{2}=\beta \xi.
\end{equation}
Then the following cases arise:\\
\textbf{Case 1:} $\alpha\neq 0$. Then $\alpha'\neq 0$ and the span $N_4$ of the set of non zero vectors \[\{v\phi^{a}X^{b}~|~0\leq a\leq \frac{l}{2}-1,0\leq b\leq l-1\}\] is $B$-invariant submodule of $N$. Thus $N_4$ is a simple $B$-module of dimension $\frac{l^2}{2}$ (similar argument as in Theorem \ref{f1}).\\
\textbf{Case 2:} $\alpha= 0$ and $\beta\neq 0$. Then $\alpha'=0$ and $\beta'\neq 0$. So there exists $0\leq r\leq \frac{l}{2}-1$ such that $u:=v\phi^{r}\neq 0$ and $u\phi=0$. Now after some direct calculation the span $N_5$ of the set \[\{u\psi^{a}X^{b}~|~~0\leq a\leq \frac{l}{2}-1,0\leq b\leq l-1\}\] is $B$-invariant submodule of $N$. Therefore $N_5$ is a simple $B$-module of dimension $\frac{l^2}{2}$ (similar argument as in Theorem \ref{f1}). \\
\textbf{Case 3:} $\alpha= 0$ and $\beta=0$. Similar argument as in [$l$-odd-Case $3$], there exists $0\leq r\leq \frac{l}{2}-1$ such that  
\[u:=v\phi^{r}\neq 0,~u\phi=0,~u\psi^{\frac{l}{2}}=0,~u\psi^{\frac{l}{2}-1}\neq 0,~uX^l=\xi u,\] 
\[~uK=q^{-r}\lambda_1u,~uYX=q^{-r}\lambda_2u, u\psi \phi=q(1-q^{2})q^{-2r}\lambda_1\lambda_2u.\]
Now the vector subspace $N_6$ of $N$ generated by the vectors $e(a,b):=v\psi^aX^b$, where $0\leq a\leq \frac{l}{2}$, $b\leq l-1$ is $B$-invariant. Hence $N_6$ is a simple $B$-module of dimension $\frac{l^2}{2}$ (similar argument as in Theorem \ref{f1}).
\par Finally the above discussions along with Theorem \ref{itd} lead us to the main result of this section.
\begin{theom}
Let $q$ be a primitive $l$-th root of unity with $l\geq 3$. Then each simple ${A}$-module with invertible action of $X$ and $Y$ is isomorphic to one of the simple $B$-module $N_i$, $1\leq i\leq 6$ as mentioned above. Moreover the $\mathbb{K}$-dimension each such simple $A$-module is maximal which is equal to $\pideg {A}$.
\end{theom}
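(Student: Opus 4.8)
The plan is to read this statement as the collation of the case analysis just carried out, so the proof should organize that analysis rather than introduce fresh computation. First I would invoke Theorem \ref{itd} to replace the problem of classifying simple $A$-modules with invertible $X,Y$ by the equivalent problem of classifying $X,Y$-torsion free simple $B$-modules. Since $B$ is affine, is a domain (hence prime) by its iterated skew-polynomial presentation $B=T[\phi][\psi,\alpha,\delta]$, and is PI by Proposition \ref{pia}, every simple $B$-module is finite dimensional over $\mathbb{K}$ of dimension at most $\pideg(B)=\pideg(A)$ by Proposition \ref{sim}. This finite-dimensionality over the algebraically closed field $\mathbb{K}$ is exactly what guarantees the common eigenvectors used below.

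Next, I would fix such a module $N$ and split on the parity of $l$. For $l$ odd I use the commuting family (\ref{opc1}), and for $l$ even the larger family (\ref{opc2}), extracting in each case a common eigenvector $v$ with eigenvalues $\alpha,\beta$ for the central elements $\phi^{l},\psi^{l}$ (their centrality is Lemma \ref{it}). The trichotomy $\alpha\neq 0$; $\alpha=0,\beta\neq 0$; $\alpha=\beta=0$ then produces, respectively, the cyclic subspaces $N_1,N_2,N_3$ (odd) and $N_4,N_5,N_6$ (even), spanned by the monomials $v\phi^{a}X^{b}$, $v\psi^{a}X^{b}$, or $u\psi^{a}X^{b}$ after passing to the vector $u$ obtained from $v$ by applying $\phi$ maximally. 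In each case the explicit action formulas show the subspace is $B$-invariant, and simplicity of $N$ forces $N$ to coincide with it.

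For the dimension claim I would note that in every case the spanning monomials are linearly independent eigenvectors distinguished by the eigenvalues of the commuting pair $K$ and $YX$ (namely $\lambda_1q^{-b-a}$ and $\lambda_2q^{b-a}$), exactly as in Theorem \ref{f1} and Theorem \ref{dimag}. This yields $\dime N=l^{2}$ for $l$ odd and $\dime N=l^{2}/2$ for $l$ even. Comparing with the PI-degree computed earlier, these values are precisely $\pideg(A)$; and since $\pideg(A)$ is the a priori upper bound from Proposition \ref{sim}, the dimension is maximal, which settles the last sentence of the theorem.

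The hard part will be the two structural verifications buried inside the case analysis, both of which I would isolate. The first is the exhaustiveness and the ``$s=l$'' step in Case 3: once $\alpha=\beta=0$, the vector $u$ must have $\psi$-order exactly $l$ (resp. $l/2$), and this is forced by the commutator identity $\psi^{s}\phi=\phi\psi^{s}+q(1-q^{2s})KYX\psi^{s-1}$ of Lemma \ref{acop}(v) together with the invertibility of $YX$ on $N$, which makes the coefficient $q(1-q^{2s})(\cdots)$ nonzero until $q^{2s}=1$. The second is checking that each $N_i$ is genuinely $B$-invariant, resting on the remaining identities of Lemma \ref{acop}, especially (v)--(vi); in the even case this additionally uses the square-root relations $(\alpha')^{2}=\alpha\xi$ and $(\beta')^{2}=\beta\xi$ from (\ref{evv1})--(\ref{evv2}), which let one re-express $v\phi^{l/2}$ and $v\psi^{l/2}$ in terms of $vX^{l/2}$ and thereby close the action on the half-sized index range. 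Once these are in hand, the theorem follows by collating the six cases under the correspondence of Theorem \ref{itd}.
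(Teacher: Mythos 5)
Your proposal is correct and follows essentially the same route as the paper: reduce via Theorem \ref{itd} to $X,Y$-torsion free simple $B$-modules, use the PI machinery for finite-dimensionality and the upper bound $\pideg(B)=\pideg(A)$, split on the parity of $l$ with the commuting families (\ref{opc1})/(\ref{opc2}), run the trichotomy on $\alpha,\beta$ to obtain the invariant cyclic subspaces $N_1,\dots,N_6$, and separate the spanning monomials by the eigenvalues of $K$ and $YX$ as in Theorem \ref{dimag} to get dimension $l^2$ (resp. $l^2/2$), which matches the PI-degree. The two verifications you isolate (the ``$s=l$'' step via Lemma \ref{acop}(v) with invertibility of $KYX$, and the closure of the action in the even case via (\ref{evv1})--(\ref{evv2})) are exactly the computations the paper carries out in its case analysis.
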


\end{document}